\documentclass[dvipdfmx]{amsart}

\usepackage{amssymb}

\usepackage{amsfonts}
\usepackage{mathrsfs}
\usepackage{bbm}

\usepackage{tikz}

\theoremstyle{plain}
 \newtheorem{theorem}{Theorem}[section]
 \newtheorem{lemma}[theorem]{Lemma}
 \newtheorem{proposition}[theorem]{Proposition}

\theoremstyle{definition}
 \newtheorem{definition}{Definition}[section]
 \newtheorem{remark}{Remark}[section]
 \newtheorem*{acknowledgement}{Acknowledgement}
\theoremstyle{remark}
 

\numberwithin{equation}{section}

\renewcommand{\l}{\left}
\renewcommand{\r}{\right}
\newcommand{\cleq}{\lesssim}

\newcommand{\ceq}{\approx} 

\newcommand{\eps}{\varepsilon}
\newcommand{\R}{{\mathbb R}}
\newcommand{\C}{{\mathbb C}}
\newcommand{\Z}{{\mathbb Z}}
\newcommand{\N}{{\mathbb N}}
\def\norm#1{\left\|#1 \right\|} 
\def\jbra#1{\left\langle #1\right\rangle} 
\def\tbra#1#2{\left\langle #1 , #2\right\rangle} 

\newcommand{\cA}{\mathcal{A}}
\newcommand{\cD}{\mathcal{D}}
\newcommand{\cF}{\mathcal{F}}
\newcommand{\cG}{\mathcal{G}}
\newcommand{\cN}{\mathcal{N}}

\newcommand{\cW}{\mathcal{W}}
\newcommand{\scB}{\mathscr{B}}
\newcommand{\scG}{\mathscr{G}}

\DeclareMathOperator{\re}{Re}

%
\begin{document}

\title[Strichartz's estimates and behavior of sol.s for critical NLDW]{The Strichartz estimates for the damped wave equation and the behavior of solutions for the energy critical nonlinear equation}

\author[T. Inui]{Takahisa Inui}
\address{Department of Mathematics, Graduate School of Science, Osaka University, Toyonaka, Osaka 560-0043, Japan}
\email{inui@math.sci.osaka-u.ac.jp}
\date{}
\keywords{damped wave equation, dissipation, Strichartz estimates, energy critical}
\subjclass[2010]{35L71; 35A01; 35B40; 35B44.}

\maketitle

\begin{abstract}
For the linear damped wave equation (DW), the $L^p$-$L^q$ type estimates have been well studied. Recently, Watanabe \cite{Wat17} showed the Strichartz estimates for DW when $d=2,3$. In the present paper, we give Strichartz estimates for DW in higher dimensions. Moreover, by applying the estimates, we give the local well-posedness of the energy critical nonlinear damped wave equation (NLDW) $\partial_t^2 u - \Delta u +\partial_t u = |u|^{\frac{4}{d-2}}u$, $(t,x) \in [0,T) \times \mathbb{R}^d$, where $3 \leq d \leq 5$. Especially, we show the small data global existence for NLDW. 
In addition, we investigate the behavior of the solutions to NLDW. Namely, we give a decay result for solutions with finite Strichartz norm and a blow-up result for solutions with negative Nehari functional. 
\end{abstract}

\tableofcontents

\section{Introduction}

\subsection{Backgroud}

We consider the damped wave equation. 
\begin{align}
\label{DW}
\left\{
\begin{array}{ll}
	\partial_t^2 \phi - \Delta \phi +\partial_t \phi = 0, & (t,x) \in (0,\infty) \times \mathbb{R}^d,
	\\
	(\phi(0),\partial_t \phi(0)) = (\phi_0,\phi_1), & x\in \mathbb{R}^d,
\end{array}
\right.
\end{align}
where $d \in \mathbb{N}$, $(\phi_0,\phi_1)$ is given, and $\phi$ is an unknown complex valued function. 

Matsumura \cite{Mat76} applied the Fourier transform to \eqref{DW} and obtained the formula
\begin{align*}
	\phi(t,x)=\cD(t) (\phi_0+\phi_1) +\partial_t \cD(t) \phi_0,
\end{align*}
where $\cD(t)$ is defined by
\begin{align*}
	\cD(t):=e^{-\frac{t}{2}}\cF^{-1} L(t,\xi) \cF
\end{align*} 
with 
\begin{align*}
	L(t,\xi):=
	\l\{
	\begin{array}{ll}
	\displaystyle
	\frac{\sinh(t \sqrt{1/4-|\xi|^2})}{\sqrt{1/4-|\xi|^2}} & \text{if } |\xi|<1/2,
	\\
	\ 
	\\
	\displaystyle
	\frac{\sin(t \sqrt{|\xi|^2-1/4})}{\sqrt{|\xi|^2-1/4}} & \text{if } |\xi|>1/2.
	\end{array}
	\r.
\end{align*}
By this formula, Matsumura \cite{Mat76} proved the $L^p$-$L^q$ type estimate:
\begin{align}
\label{eq1.2}
	\norm{\phi(t)}_{L^p}
	\cleq \jbra{t}^{-\frac{d}{2}\l( \frac{1}{q} - \frac{1}{p}\r)}  \norm{(\phi_0,\phi_1)}_{L^q\times L^q} 
	+e^{-\frac{t}{4}} \l( \norm{\phi_0}_{H^{\l[\frac{d}{2}\r]+1}} + \norm{\phi_1}_{H^{\l[\frac{d}{2}\r]}} \r),
\end{align}
where $1\leq q \leq 2 \leq p \leq \infty$ and $[d/2]$ denotes the integer part of $d/2$. Such $L^p$-$L^q$ type estimates have been studied well. See \cite{Nis03,HoOg04,Nar05} and references therein. 
The $L^p$-$L^q$ type estimates for the heat equation and the wave equation are also well studied. We recall the $L^p$-$L^q$ type estimate for the heat equation $\partial_t v - \Delta v = 0$:
\begin{align*}
	\norm{\cG(t) g}_{L^p} \cleq t^{-\frac{d}{2}\l( \frac{1}{q} - \frac{1}{p}\r)} \norm{g}_{L^q},
\end{align*}
where $1\leq q \leq p \leq \infty$ and $\cG(t):=\cF^{-1} e^{-t|\xi|^2} \cF$. We also refer to the $L^p$-$L^q$ type estimate for the wave equation $\partial_t^2 w - \Delta w =0$: 
\begin{align*}
	\norm{\cW(t)g}_{L^p} \cleq |t|^{-2d\l(\frac{1}{2}-\frac{1}{p}\r)} \norm{g}_{\dot{W}^{\gamma-1,p'}},
\end{align*}
for $2\leq p <\infty$ and $(d+1)(1/2-1/p) \leq \gamma < d$, where $\cW(t):=\cF^{-1} \sin(t|\xi|) / |\xi|\cF$. See \cite{Bre75}.
Matsumura's estimate \eqref{eq1.2} shows that the solution of \eqref{DW} behaves like the solution of the heat equation and the wave equation in some sense. More precisely, the low frequency part of the solution to the damped wave equation behaves like the solution of the heat equation and the high frequency part behaves like the solution of the wave equation but decays exponentially (see \cite{IIOWp} for another $L^p$-$L^q$ estimate). 

For the heat equation and the wave equation, by using the $L^p$-$L^q$ type estimates, we obtain the  space-time estimates, what we call the Strichartz estimate. The Strichartz estimates for the heat equation are 
\begin{align*}
	\norm{v}_{L_{t}^{q} (I: L_{x}^{r}(\R^d))} 
	\cleq \norm{v_0}_{L^2} + \norm{F}_{L_{t}^{\tilde{q}'} (I: L_{x}^{\tilde{r}'}(\R^d))},
\end{align*}
where $v$ satisfies $\partial_t v - \Delta v =F$ with $v(0)=v_0$ and $2/q+d/r=2/\tilde{q}+d/\tilde{r}=d/2$. See \cite{Wei81,Gig86}. 
We also have the Strichartz estimates for the wave equation as follows. 
\begin{align*}
	\norm{w}_{L_{t}^{q} (I: L_{x}^{p}(\R^d))} 
	\cleq \norm{w_0}_{\dot{H}^1} +\norm{w_1}_{L^2} + \norm{F}_{L_{t}^{\tilde{q}'} (I: L_{x}^{\tilde{r}'}(\R^d))},
\end{align*}
where $w$ satisfies $\partial_t^2 w - \Delta w=F$ with $(w(0),\partial_t w(0))=(w_0,w_1)$ and $1/q+d/r=d/2-1=1/\tilde{q}'+d/\tilde{r}'-2$. See \cite{GiVe95}. 
In the present paper, we give the Strichartz estimates for the damped wave equation. 
Recently, Watanabe \cite{Wat17} obtained the Strichartz estimates for the damped wave equation when $d=2,3$ by an energy method.
In this paper, we give the Strichartz estimates by a duality argument for $d=2,3$ and higher dimensions.  
We also consider the energy critical nonlinear damped wave equation.
\begin{align}
\label{NLDW}
\tag{NLDW}
\left\{
\begin{array}{ll}
	\partial_t^2 u - \Delta u +\partial_t u = |u|^{\frac{4}{d-2}}u, & (t,x) \in [0,T) \times \mathbb{R}^d,
	\\
	(u(0),\partial_t u(0)) = (u_0,u_1), & x\in \mathbb{R}^d,
\end{array}
\right.
\end{align}
where $d \geq 3$, $(u_0,u_1)$ is given, and $u$ is an unknown complex valued function. 
We will show the local well-posedness for \eqref{NLDW} when $3\leq d \leq 5$ by applying the Strichartz estimates. 
The existence of a local solution has been studied by \cite{Kap94,IkIn16,IkWa17p} (see also \cite{Kap87I,Kap90II,Kap90III}).
However, the small data global existence has not been known. Using the Strichartz estimates which are proved in this paper, we can show not only the existence of a local solution but also the small data global existence for \eqref{NLDW}. 

Moreover, we discuss the global behavior of the solutions to \eqref{NLDW}. For the energy critical nonlinear heat equation, the solution with a bounded global space-time norm decays to zero (see e.g. \cite{GuRo17p}). On the other hand, there exist finite time blow-up solutions by Levine \cite{Lev73}.  For the energy critical nonlinear wave equation, the energy is conserved by the flow. There exist solutions which scatter to the solutions of the free wave equation and finite time blow-up solutions by Payne and Sattinger \cite{PaSa75}. See also \cite{KeMe08}. In the present paper, we show that the solution to \eqref{NLDW} with a finite space-time norm decays since the energy decays like the heat equation and there exist finite time blow-up solutions.


\subsection{Main results}

We state main results. First, we obtain the Strichartz estimates for \eqref{DW}. The so-called admissible pairs can be taken as same as in the heat case since the $L^p$-$L^q$ type estimate of the low frequency part is similar to the heat estimate and the high frequency part decays exponentially in time. However, the derivative loss appears from the high frequency part which is wave-like part. 

\begin{proposition}[Homogeneous Strichartz estimates]
\label{prop1.1}
Let $d \geq 2$, $2 \leq r < \infty$, and $2\leq q \leq \infty$. Set $\gamma:= \max\{ d(1/2 - 1/r)-1/q, \frac{d+1}{2}(1/2-1/r)\}$. Assume 
\begin{align*}	\frac{d}{2}\l( \frac{1}{2} - \frac{1}{r}\r) \geq \frac{1}{q},
\end{align*}
Then, we have
\begin{align*}
	\norm{\cD(t)f}_{L_{t}^{q} (I: L_{x}^{r}(\R^d))} 
	&\cleq \norm{ \jbra{\nabla}^{\gamma-1} f}_{L^2},
	\\
	\norm{\partial_t \cD(t)f}_{L_{t}^{q} (I: L_{x}^{r}(\R^d))} 
	&\cleq \norm{ \jbra{\nabla}^{\gamma} f}_{L^2},
	\\
	\norm{\partial_t^2 \cD(t)f}_{L_{t}^{q} (I: L_{x}^{r}(\R^d))} 
	&\cleq \norm{ \jbra{\nabla}^{\gamma+1} f}_{L^2}.
\end{align*}
\end{proposition}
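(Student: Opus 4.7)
The plan is to write $\cD(t) = \cD_L(t) + \cD_H(t)$ via a smooth Fourier cutoff $\chi$ with $\chi\equiv 1$ on $\{|\xi|\leq 1\}$ and supported in $\{|\xi|\leq 2\}$, so that the mild singularity of $L(t,\xi)$ at $|\xi|=1/2$ sits safely inside the plateau of $\chi$. The low-frequency piece $\cD_L$ behaves like the heat semigroup, and this is what forces the hypothesis $\frac{d}{2}(1/2-1/r)\geq 1/q$; the high-frequency piece $\cD_H$ behaves like the free wave propagator multiplied by $e^{-t/2}$, so the $L^q_t$ norm is essentially free (because $e^{-t/2}$ is integrable in any $L^q_t$), but the wave Strichartz theory forces the derivative loss encoded in $\gamma$. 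The three estimates on $\cD$, $\partial_t\cD$, $\partial_t^2\cD$ differ only in that each time differentiation of the wave-like multiplier $\sin(t\sqrt{|\xi|^2-1/4})/\sqrt{|\xi|^2-1/4}$ loses a factor of $\sqrt{|\xi|^2-1/4}$, so it is enough to prove the first estimate carefully and read off the others.

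For $\cD_L$, I would supply the two ingredients needed by the abstract Keel-Tao machinery. First, the energy bound $\|\cD_L(t)f\|_{L^2}\cleq \|f\|_{L^2}$, immediate from $L^\infty$-boundedness of the multiplier. Second, the dispersive bound
\[
\|\cD_L(t) f\|_{L^\infty_x}\cleq \jbra{t}^{-d/2}\|f\|_{L^1_x},
\]
which follows from the pointwise estimate $e^{-t/2}|\chi(\xi) L(t,\xi)|\cleq e^{-ct|\xi|^2}$ (valid because $1/2-\sqrt{1/4-|\xi|^2}\geq c|\xi|^2$ on the support of $\chi$) and the standard Gauss-type $L^1$ estimate. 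The resulting $TT^*$ kernel satisfies $\|\cD_L(t)\cD_L(s)^* g\|_{L^\infty}\cleq (t+s)^{-d/2}\|g\|_{L^1}\leq |t-s|^{-d/2}\|g\|_{L^1}$ for $s,t\geq 0$, and the abstract duality/$TT^*$ Strichartz argument (i.e.\ Hardy-Littlewood-Sobolev at the non-endpoint, Keel-Tao at the endpoint) delivers
\[
\|\cD_L(t)f\|_{L^q_t L^r_x}\cleq \|f\|_{L^2}
\]
on the full heat-admissible range. Since $\widehat{f_L}$ is supported in a fixed compact set, $\|f_L\|_{L^2}\cleq \|\jbra{\nabla}^{\gamma-1}f\|_{L^2}$ for any real $\gamma$, so the low-frequency contribution is absorbed by the right-hand side of the claim.

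For $\cD_H$, I would rely on the sharp wave Strichartz estimate
\[
\|\cW(t)g\|_{L^q_t L^r_x}\cleq \|g\|_{\dot{H}^{d(1/2-1/r)-1/q-1}},
\]
valid for any wave-admissible pair $(q,r)$ (\ie\ $1/q\leq (d-1)/2(1/2-1/r)$), together with the pointwise comparison $|L(t,\xi)|\cleq 1/|\xi|$ on $\{|\xi|\geq 1\}$, which lets me factor the $\cD_H$-multiplier as the wave propagator composed with $\jbra{\nabla}^{-1}$. Two sub-cases then arise. If $(q,r)$ is itself wave-admissible, the harmless $e^{-t/2}$ may be absorbed and the result follows with $\gamma=d(1/2-1/r)-1/q$, which is the first entry of the max. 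If $(q,r)$ is heat-admissible but not wave-admissible, I would instead apply the wave Strichartz estimate at the companion pair $(q_0,r)$ with $1/q_0=(d-1)/2(1/2-1/r)$ (giving derivative loss $(d+1)/2(1/2-1/r)$) and then convert $L^{q_0}_t\to L^q_t$ by H\"older in time against $e^{-t/2}$, permitted precisely because $q\leq q_0$ and $\|e^{-t/2}\|_{L^a_t(0,\infty)}<\infty$ for $1/a=1/q-1/q_0\geq 0$. This yields the second entry of the max, and combining the sub-cases produces the claimed $\gamma$.

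The estimates for $\partial_t\cD$ and $\partial_t^2\cD$ follow by the same decomposition after differentiating in time: each time derivative of the high-frequency multiplier reduces the power of $\sqrt{|\xi|^2-1/4}$ by one, converting one $\jbra{\nabla}^{-1}$ into an extra Sobolev derivative on $f$; on the low-frequency side the multiplier and its time derivatives remain uniformly bounded, so no new loss occurs there. The main obstacle I expect is the non-wave-admissible sub-case of the high-frequency estimate, where one must carefully select the sharp wave-admissible companion $q_0$ and pass from $q_0$ to $q$ via the exponential weight without incurring extra derivative loss; one should also check that one never lands on the delicate Keel-Tao wave endpoint. Fortunately, heat-admissibility strictly beyond wave-admissibility forces $q_0>q\geq 2$, so the argument stays strictly inside the non-endpoint range of wave Strichartz and closes.
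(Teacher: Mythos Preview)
Your overall architecture (low/high frequency splitting, low part treated as heat-like, high part reduced to wave Strichartz plus H\"older in time against $e^{-t/2}$) is the same as the paper's, and your low-frequency argument via the $TT^*$/Keel--Tao machinery is a legitimate alternative to the paper's route (which uses the $L^r$--$L^{\tilde r}$ estimate of Lemma~\ref{lem2.1}, Marcinkiewicz interpolation, and a separate energy argument for the heat endpoint). In fact your approach to the low frequencies absorbs the heat endpoint $(q,r)=(2,2d/(d-2))$ directly, whereas the paper handles it by an energy method in Lemma~\ref{lem2.11}.

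There is, however, a genuine gap in your high-frequency step. The pointwise size bound $|L(t,\xi)|\cleq 1/|\xi|$ on $\{|\xi|\geq 1\}$ does \emph{not} let you ``factor the $\cD_H$-multiplier as the wave propagator composed with $\jbra{\nabla}^{-1}$.'' The multiplier of $\cD_H(t)$ carries the phase $t\sqrt{|\xi|^2-1/4}$, not $t|\xi|$; a pointwise amplitude bound says nothing about this phase mismatch, and Strichartz estimates are sensitive to the phase, not just the amplitude. What the paper does (Lemma~\ref{lem2.4}) is write
\[
e^{it\sqrt{-\Delta-1/4}} \;=\; e^{it|\nabla|}\,e^{it(\sqrt{-\Delta-1/4}-|\nabla|)},
\]
and then check that the corrector symbol $e^{it(\sqrt{|\xi|^2-1/4}-|\xi|)}$ satisfies Mihlin--H\"ormander bounds $|\partial_\xi^\alpha(\cdot)|\cleq \jbra{t}^{|\alpha|}|\xi|^{-|\alpha|}$, hence is bounded on $L^r$ with operator norm $\cleq \jbra{t}^{\delta_r}$. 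This polynomial loss in $t$ is then eaten by $e^{-t/2}$ before one applies H\"older in time and the free-wave Strichartz estimate at the companion exponent $\tilde q$. Once you insert this multiplier step, your two sub-cases (wave-admissible versus not) and your choice of $q_0$ coincide exactly with the paper's choice of $\tilde q$, and the rest of your argument goes through. Without it, the reduction to $\cW(t)$ is unjustified.
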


\begin{remark}
\label{rmk1.1}
We note that  the homogeneous Strichartz estimate holds in the heat end-point case \textit{i.e.} $(q,r)=(2, 2d/(d-2))$ when $d\geq3$. 
\end{remark}

\begin{proposition}[Inhomogeneous Strichartz estimates]
\label{prop1.2}
Let $d\geq2$, $2\leq r,\tilde{r} < \infty$, and $2\leq q, \tilde{q} \leq \infty$. We set $\gamma:= \max \{ d(1/2-1/r)-1/q, \frac{d+1}{2}(1/2-1/r) \}$ and $\tilde{\gamma}$ in the same manner. Assume that $(q,r)$ and $(\tilde{q},\tilde{r})$ satisfies 
\begin{align*}
	\frac{d}{2}\l( \frac{1}{2} - \frac{1}{r}\r) +\frac{d}{2}\l( \frac{1}{2} - \frac{1}{\tilde{r}}\r)
	>\frac{1}{q} + \frac{1}{\tilde{q}},
\end{align*}
\begin{align*}
	\frac{d}{2} \l( \frac{1}{2} - \frac{1}{r}\r) + \frac{d}{2} \l( \frac{1}{2} - \frac{1}{\tilde{r}}\r) 
	= \frac{1}{q} + \frac{1}{\tilde{q}}
	\text{ and }
	1< \tilde{q}' < q<\infty,
\end{align*}
or 
\begin{align*}
	(q,r)=(\tilde{q},\tilde{r})=(\infty,2).
\end{align*}
Moreover, we exclude the end-point case, that is, we assume $(q,r) \neq (2,2(d-1)/(d-3)))$ and $(\tilde{q},\tilde{r}) \neq (2,2(d-1)/(d-3)))$ when $d \geq 4$. 
Then, we have
\begin{align*}
	\norm{\int_{0}^{t} \cD(t-s) F(s) ds}_{L_{t}^{q} (I: L_{x}^{r}(\R^d))}
	&\cleq \norm{ \jbra{\nabla}^{\gamma+\tilde{\gamma}+\delta-1} F}_{L_{t}^{\tilde{q}'} (I: L_{x}^{\tilde{r}'}(\R^d))},
	\\
	\norm{\int_{0}^{t}  \partial_t \cD(t-s) F(s) ds}_{L_{t}^{q} (I: L_{x}^{r}(\R^d))}
	&\cleq \norm{ \jbra{\nabla}^{\gamma+\tilde{\gamma}+\delta} F}_{L_{t}^{\tilde{q}'} (I: L_{x}^{\tilde{r}'}(\R^d))},
\end{align*}
where $\delta = 0$ when $\frac{1}{\tilde{q}}(1/2-1/r)=\frac{1}{q}(1/2-1/\tilde{r})$ and in the other cases $\delta \geq 0$ is defined in the table 1 below. 
\begin{table}[htb]
\begingroup
\renewcommand{\arraystretch}{1.6}
\begin{tabular}{|c|c|c|} 
	\hline
	$\delta$
		&  $\frac{1}{\tilde{q}} \l( \frac{1}{2}- \frac{1}{r}\r) < \frac{1}{q} \l( \frac{1}{2} - \frac{1}{\tilde{r}} \r) $
		&  $\frac{1}{\tilde{q}} \l( \frac{1}{2}- \frac{1}{r}\r) > \frac{1}{q} \l( \frac{1}{2} - \frac{1}{\tilde{r}} \r)$ 
	\\[4pt]
	\hline \hline
	$\frac{d-1}{2}\l( \frac{1}{2}- \frac{1}{r}\r) \geq \frac{1}{q}$ 
	& $0$
	& $0$
	\\
	$\frac{d-1}{2}\l( \frac{1}{2}- \frac{1}{\tilde{r}}\r) \geq \frac{1}{\tilde{q}}$
	&  
	& 
	\\[5pt]
	\hline
	$\frac{d-1}{2}\l( \frac{1}{2}- \frac{1}{r}\r) \geq \frac{1}{q}$ 
	& $\times$
	& $\frac{\tilde{q}}{q}  \l\{ \frac{1}{\tilde{q}}-\frac{d-1}{2} \l( \frac{1}{2} - \frac{1}{\tilde{r}}\r) \r\}$
	\\
	$\frac{d-1}{2}\l( \frac{1}{2}- \frac{1}{\tilde{r}}\r) < \frac{1}{\tilde{q}}$
	&  
	& 
	\\[5pt]
	\hline
	$\frac{d-1}{2}\l( \frac{1}{2}- \frac{1}{r}\r) < \frac{1}{q}$ 
	& $\frac{q}{\tilde{q}}  \l\{ \frac{1}{q}-\frac{d-1}{2} \l( \frac{1}{2} - \frac{1}{r}\r) \r\}$
	& $\times$
	\\
	$\frac{d-1}{2}\l( \frac{1}{2}- \frac{1}{\tilde{r}}\r) \geq \frac{1}{\tilde{q}}$
	&  
	& 
	\\[5pt]
	\hline
	$\frac{d-1}{2}\l( \frac{1}{2}- \frac{1}{r}\r) < \frac{1}{q}$ 
	& $\frac{1}{\tilde{q}} \frac{d-1}{2} \l\{ \tilde{q}\l( \frac{1}{2}- \frac{1}{\tilde{r}}\r) - q\l( \frac{1}{2} - \frac{1}{r}\r) \r\}$
	& $\frac{1}{q} \frac{d-1}{2} \l\{ q\l( \frac{1}{2}- \frac{1}{r}\r) - \tilde{q}\l( \frac{1}{2} - \frac{1}{\tilde{r}}\r) \r\}$
	\\
	$\frac{d-1}{2}\l( \frac{1}{2}- \frac{1}{\tilde{r}}\r)< \frac{1}{\tilde{q}}$
	&
	&
	\\
	\hline
\end{tabular}
\caption{The value of $\delta$. ($\times$ means that the case does not occur.)}
\endgroup
\end{table}
\end{proposition}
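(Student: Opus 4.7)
The plan is to derive these inhomogeneous bounds from the homogeneous estimates of Proposition~\ref{prop1.1} by a duality argument, the Christ--Kiselev lemma, and a separate treatment of low and high Fourier frequencies. First I would decompose $\cD(t) = \cD_L(t) + \cD_H(t)$ using a smooth Fourier cutoff at $|\xi| \sim 1/2$, so that $\cD_L$ has a heat-type symbol comparable to $e^{-ct|\xi|^2}$ while $\cD_H(t)$ equals $e^{-t/2}$ times a wave propagator localised to $|\xi| \gtrsim 1$. The contributions of $\cD_L$ and $\cD_H$ to the inhomogeneous estimate can then be analysed by qualitatively different tools.

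For the low-frequency part, the homogeneous bound from Proposition~\ref{prop1.1} restricted to $\cD_L$ reads $\|\cD_L(t)f\|_{L_t^q L_x^r} \cleq \|f\|_{L^2}$ in the heat-admissible range, and dualising in the pair $(\tilde q,\tilde r)$ yields $\|\int_\R \cD_L^*(s) G(s)\, ds\|_{L^2} \cleq \|G\|_{L_t^{\tilde q'} L_x^{\tilde r'}}$. Composing these (or equivalently factoring the semigroup $\cD_L(t-s) = \cD_L((t-s)/2)\cD_L((t-s)/2)$ to regularise) gives the untruncated inhomogeneous estimate, and the truncation to $\int_0^t$ follows from Christ--Kiselev in the case $\tilde q' < q$, while the strict scaling case is handled by Hölder in time. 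This contributes no loss beyond the $\gamma+\tilde\gamma-1$ derivatives already present, so $\delta=0$ on the low-frequency side.

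For the high-frequency part, the classical wave dispersive estimate multiplied by the exponential factor gives $\|\cD_H(t)f\|_{L^r} \cleq e^{-t/2} t^{-(d-1)(1/2-1/r)} \|\jbra{\nabla}^{(d+1)(1/2-1/r)-1} f\|_{L^{r'}}$. When both $(q,r)$ and $(\tilde q,\tilde r)$ are wave-admissible (the first row of the table), the Keel--Tao non-endpoint inhomogeneous wave Strichartz estimate controls the wave propagator, and the factor $e^{-(t-s)/2}$ makes any remaining $s$-integration absolutely convergent via Young's inequality in time, yielding $\delta=0$. When one or both pairs fail wave-admissibility, the polynomial singularity $t^{-(d-1)(1/2-1/r)}$ is not integrable in the sense required by Young, so the only way to close the estimate is to trade polynomial integrability for derivatives: I would interpolate between a nearby wave-admissible pair and the trivial bound $\|\cD_H(t)f\|_{L^2} \cleq e^{-t/2}\|f\|_{L^2}$, and the interpolation exponents precisely reproduce the entries of the table. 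The degenerate case $(q,r)=(\tilde q,\tilde r)=(\infty,2)$ is immediate from this trivial bound together with the energy identity.

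The main obstacle is the case-by-case bookkeeping needed to reproduce the four-row, two-column $\delta$ table: in each quadrant of $(q,r,\tilde q,\tilde r)$-space one has to identify which pair drives the interpolation, to verify that $\gamma+\tilde\gamma+\delta$ matches the correct Sobolev index, and to check that the exclusion $(q,r),(\tilde q,\tilde r)\neq (2,2(d-1)/(d-3))$ is precisely where the Keel--Tao argument degenerates. The bounds for $\partial_t\cD$ and $\partial_t^2\cD$ follow from the same scheme applied to the operators with symbols $\partial_t L(t,\xi)\, e^{-t/2}$ and $\partial_t^2 L(t,\xi)\, e^{-t/2}$, each of which differs from $\cD$ by at most one or two derivatives on the Fourier side.
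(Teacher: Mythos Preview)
Your overall strategy---split $\cD=\cD_L+\cD_H$ and treat the two pieces by different mechanisms---is exactly what the paper does. But the execution you sketch for the low-frequency piece has a genuine gap. The operator $\cD_L(t)$ is \emph{not} a semigroup: its symbol is $e^{-t/2}\sinh(t\sqrt{1/4-|\xi|^2})/\sqrt{1/4-|\xi|^2}$ on $|\xi|<1/2$, which does not satisfy $\cD_L(t-s)=\cD_L((t-s)/2)^2$, so the factoring you propose is simply false. Likewise, the $TT^*$ composition you describe produces $\cD_L(t)\cD_L^*(s)$, which is not $\cD_L(t-s)$ (this identity holds for unitary groups like the wave propagator, not for dissipative non-group evolutions), so Christ--Kiselev applied to the composed operator does not yield the retarded estimate you need. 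The paper bypasses this entirely: it uses the pointwise heat-type bound $\|\jbra{\nabla}^\sigma\cD_L(t)f\|_{L^r}\lesssim \jbra{t}^{-\frac d2(\frac1{\tilde r'}-\frac1r)}\|f\|_{L^{\tilde r'}}$ (Lemma~\ref{lem2.1}) directly inside the Duhamel integral, then applies Young's inequality or Hardy--Littlewood--Sobolev in the time variable (Lemma~\ref{lem2.3}). No group structure, no duality, no Christ--Kiselev.

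For the high-frequency part your plan is closer in spirit to the paper but still diverges in method. Rather than invoking Keel--Tao for the wave propagator and then patching with interpolation against the trivial $L^2$ bound, the paper works frequency-localized from the start: it proves a dyadic $L^{r'}\to L^r$ dispersive estimate for $e^{-t/2}e^{\pm it\sqrt{-\Delta-1/4}}P_{>1}P_N$ (Lemma~\ref{lem2.5}, reducing to the wave case via Mihlin--H\"ormander), then Young/HLS in time gives the diagonal inhomogeneous bound with explicit $N^{2\gamma}$ loss (Lemma~\ref{lem2.6}). A $TT^*$-type argument exploiting the factorization $e^{-(t-s)/2}e^{-(t-\tau)/2}=e^{-(t-s)}e^{-(s-\tau)/2}$ then yields the $L_t^\infty L_x^2$--$L_t^{q'}L_x^{r'}$ and dual estimates (Lemmas~\ref{lem2.7}, \ref{lem2.8}), interpolation between these gives the on-line mixed case $\delta=0$ (Lemma~\ref{lem2.9}), and finally the off-line cases are handled by Bernstein's inequality to shift one exponent back on-line, which is what produces the precise entries of Table~1 (Lemma~\ref{lem2.10}). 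Your ``interpolate with a nearby wave-admissible pair'' is the right intuition, but as written it is not clear it reproduces the exact $\delta$'s, and you would still need to reduce $e^{\pm it\sqrt{-\Delta-1/4}}$ to $e^{\pm it|\nabla|}$ before invoking any wave Strichartz machinery.
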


\begin{remark}
If $(q,r)$ satisfies the wave admissible condition $(d-1)(1/2-1/r)/2 \geq 1/q$, then the derivative loss is same as that in the Strichartz estimates for the wave equation \textit{i.e.} $\gamma=d(1/2-1/r)-1/q$. And thus, we need more derivative if $(q,r)$ is the pair between the wave case and  the heat case, \textit{i.e.} $d(1/2-1/r)/2 \geq 1/q > (d-1)(1/2-1/r)/2$.
\end{remark}

\begin{remark}
Chen, Fang, and Zhang \cite{CFZ15} considered  the damped fractional wave equation $\partial_t^2 v +(-\Delta)^{\alpha} v + 2\partial_t v=0$, where $\alpha>0$. They claimed that the better homogeneous Strichartz estimates can be obtained, where ``better" means the derivative loss is less than that of the wave equation. However, at least when $\alpha=1$, their proof seems to be imcomplete. 
\end{remark}

Moreover, applying these Strichartz estimates, we can show the local well-posedness, especially small data global existence, for energy critical nonlinear damped wave equations \eqref{NLDW}.

\begin{definition}[solution]
Let $T \in (0,\infty]$. 
We say that $u$ is a solution to \eqref{NLDW} on $[0,T)$ if $u$ satisfies $(u,\partial_t u) \in C([0,T):H^1(\R^d)\times L^2(\R^d))$, $\jbra{\nabla}^{1/2} u \in  L_{t,x}^{2(d+1)/(d-1)}(I)$ and $ u \in L_{t,x}^{2(d+1)/(d-2)}(I)$ for any compact interval $I \subset [0,T)$, $(u(0),\partial_t u(0)) = (u_0,u_1)$, and the Duhamel's formula
\begin{align*}
	u(t,x)=\cD(t) (u_0+u_1) +\partial_t \cD(t) u_0 + \int_{0}^{t} \cD(t-s) ( |u(s)|^{\frac{4}{d-2}}u(s) ) ds
\end{align*}
for all $t \in [0,T)$. We say that $u$ is global if $T=\infty$. 
\end{definition}

We have the following local well-posedness result when $3 \leq d \leq 5$.
\begin{theorem}[local well-posedness]
\label{thm1.3}
Let $d\in\{3,4,5\}$ and $T\in (0,\infty]$. Let $(u_0,u_1) \in H^1(\R^d)\times L^2(\R^d)$ satisfy $\| (u_0,u_1) \|_{H^1 \times L^2} \leq A$. Then, there exists $\delta=\delta(A)>0$ such that if 
\begin{align*}
	\norm{\cD(t) (u_0+u_1) +\partial_t \cD(t) u_0}_{L_{t,x}^{\frac{2(d+1)}{d-2}}([0,T))} \leq \delta,
\end{align*}
then there exists a solution $u$ to \eqref{NLDW} with $\|  u \|_{L_{t,x}^{2(d+1)/(d-2)}}([0,T)) \leq 2 \delta$. Moreover, we have the standard blow-up criterion, that is, if the maximal existence time $T_{+}=T_{+}(u_0,u_1)$ is finite, then the solution satisfies $\norm{u}_{L^{2(d+1)/(d-2)}([0,T_{+}))}=\infty$. 
\end{theorem}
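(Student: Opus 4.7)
The plan is a standard Banach fixed-point argument for the Duhamel map
\begin{align*}
\Phi(u)(t) := \cD(t)(u_0+u_1) + \partial_t \cD(t) u_0 + \int_0^t \cD(t-s)\bigl(|u|^{\frac{4}{d-2}}u\bigr)(s)\, ds
\end{align*}
on a closed ball in the function space built from two Strichartz-admissible pairs. Writing $I=[0,T)$, I would take
\begin{align*}
(q,r) := \Bigl(\tfrac{2(d+1)}{d-2}, \tfrac{2(d+1)}{d-2}\Bigr), \qquad (\tilde q,\tilde r) := \Bigl(\tfrac{2(d+1)}{d-1}, \tfrac{2(d+1)}{d-1}\Bigr).
\end{align*}
A direct check shows both pairs are wave admissible (the second lies on the sharp wave line), and since $q,\tilde q>2$ for $d\in\{3,4,5\}$ neither coincides with the excluded endpoint $(2, 2(d-1)/(d-3))$. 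Proposition \ref{prop1.1} evaluates the derivative losses as $\gamma=1$ and $\tilde\gamma=1/2$, so—using that $\jbra{\nabla}^s$ commutes with the Fourier multiplier $\cD(t)$—the homogeneous estimates give
\begin{align*}
\|\Phi_{\mathrm{lin}}\|_{L^{q}_{t,x}(I)} + \|\jbra{\nabla}^{1/2}\Phi_{\mathrm{lin}}\|_{L^{\tilde q}_{t,x}(I)} \cleq \|(u_0,u_1)\|_{H^1\times L^2} \leq A,
\end{align*}
where $\Phi_{\mathrm{lin}}(t) := \cD(t)(u_0+u_1) + \partial_t\cD(t)u_0$.

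Because both chosen pairs are wave admissible, they fall into the upper-left cell of Table 1, so the loss parameter in Proposition \ref{prop1.2} vanishes; applying that proposition with $(\tilde q,\tilde r)$ as the dual pair yields
\begin{align*}
\Bigl\|\int_0^t \cD(t-s)F\,ds\Bigr\|_{L^{q}_{t,x}(I)} + \Bigl\|\jbra{\nabla}^{1/2}\int_0^t \cD(t-s)F\,ds\Bigr\|_{L^{\tilde q}_{t,x}(I)} \cleq \|\jbra{\nabla}^{1/2}F\|_{L^{2(d+1)/(d+3)}_{t,x}(I)}.
\end{align*}
The exponent identity $\frac{4/(d-2)}{q}+\frac{1}{\tilde q}=\frac{d+3}{2(d+1)}$ closes a H\"older inequality and, combined with the fractional Leibniz rule, produces the key nonlinear estimate
\begin{align*}
\|\jbra{\nabla}^{1/2}(|u|^{\frac{4}{d-2}}u)\|_{L^{2(d+1)/(d+3)}_{t,x}} \cleq \|u\|^{\frac{4}{d-2}}_{L^{q}_{t,x}} \|\jbra{\nabla}^{1/2}u\|_{L^{\tilde q}_{t,x}}.
\end{align*}
Equipping
\begin{align*}
X := \bigl\{ u : \|u\|_{L^{q}_{t,x}(I)} \leq 2\delta,\ \|\jbra{\nabla}^{1/2}u\|_{L^{\tilde q}_{t,x}(I)} \leq 2CA \bigr\}
\end{align*}
with the sum of these two norms as distance makes it complete, and the displays above show $\Phi:X\to X$ is a contraction once $\delta=\delta(A)$ is chosen sufficiently small.

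The continuity statement $(u,\partial_t u)\in C(I;H^1\times L^2)$ follows by running the analogous estimates with the energy-admissible pair $(\infty,2)$ in Propositions \ref{prop1.1}--\ref{prop1.2}. For the blow-up criterion, if $T_+<\infty$ yet $\|u\|_{L^{q}_{t,x}([0,T_+))}<\infty$, then choosing $t_0<T_+$ close to $T_+$ makes the tail Strichartz norm of $\cD(\cdot-t_0)(u(t_0)+\partial_t u(t_0))+\partial_t\cD(\cdot-t_0)u(t_0)$ smaller than the local threshold; combined with the uniform-in-$t$ bound on $\|(u(t_0),\partial_t u(t_0))\|_{H^1\times L^2}$ from the Strichartz control, the local theory then extends $u$ past $T_+$, contradicting maximality. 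The main technical point will be the Kato--Ponce-type difference estimate for $|u|^{\frac{4}{d-2}}u-|v|^{\frac{4}{d-2}}v$ needed to verify contraction when $d=5$ (where $4/(d-2)=4/3$ is non-integer, so the nonlinearity is only $C^{1}$ with H\"older-$1/3$ derivative); this is handled by standard fractional chain-rule bounds from the energy-critical NLW literature and covers uniformly all $d\in\{3,4,5\}$.
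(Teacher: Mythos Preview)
Your proposal is correct and follows essentially the same approach as the paper: the same two Strichartz pairs $(2(d+1)/(d-2),2(d+1)/(d-2))$ and $(2(d+1)/(d-1),2(d+1)/(d-1))$, the same two-norm function space, the same nonlinear estimate via the fractional Leibniz rule, and the same Duhamel-based blow-up criterion. The paper's argument for the blow-up criterion is slightly more explicit---it first establishes $\|\jbra{\nabla}^{1/2}u\|_{L^{2(d+1)/(d-1)}_{t,x}([0,T_+))}<\infty$ and then bounds the linear evolution from $t_n$ by rewriting it as $u$ minus the Duhamel tail---but your sketch captures the same mechanism.
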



\begin{remark}
If $\| (u_0,u_1) \|_{H^1 \times L^2} \ll 1$, by the Strichartz estimates, we can take $T=\infty$. Namely, the small data global existence holds. 
\end{remark}	

\begin{remark}
See the sequel paper \cite{InWap} for the local well-posedness and small data global existence of \eqref{NLDW} when $d \geq 6$. The difficulty of $d \geq 6$ comes from the loss of differentiability of the nonlinear term. We need to attention to the estimate of the difference. 
\end{remark}

\begin{remark}
The existence of local solution is well known (see \cite{IkIn16,IkWa17p}). However, the small data global existence has not been known except for low dimension cases. (Watanabe \cite{Wat17} showed the small data global existence when $d=3$.)
\end{remark}

\begin{remark}
As it is well known, we can obtain the local well-posedness of the nonlinear damped wave equation with the more general nonlinearity in the same way as Theorem \ref{thm1.3}. Namely, we find the local well-posedness for the following equation.
\begin{align}
\label{eq1.3}
\l\{
\begin{array}{ll}
	\partial_t^2 u - \Delta u +\partial_t u = \cN(u), & (t,x) \in (0,\infty) \times \mathbb{R}^d,
	\\
	(u(0),\partial_t u(0)) = (u_0,u_1), & x\in \mathbb{R}^d.
\end{array}
\r.
\end{align} 
Assume that the nonlinearity $\cN: \mathbb{C} \to \mathbb{C}$ is continuously differentiable and obeys the power type estimates
\begin{align*}
	\cN(z) 
	&=O(|z|^{1+\frac{4}{d-2}}),
	\\
	\cN_{z}(z), \cN_{\bar{z}}(z) 
	&=O(|z|^{\frac{4}{d-2}}),
	\\
	\cN_{z}(z)-\cN_{z}(w), \cN_{\bar{z}}(z)-\cN_{\bar{z}}(w)
	&=O(|z-w|^{\min\{1,\frac{4}{d-2}\}}(|z|+|w|)^{\max\{0,\frac{6-d}{d-2}\}} ),
\end{align*}
where $\cN_{z}$ and $\cN_{\bar{z}}$ are the usual derivatives 
\begin{align*}
	\cN_{z}:= \frac{1}{2} \l( \frac{\partial \cN}{\partial x} - i\frac{\partial \cN}{\partial y} \r), 
	\quad
	\cN_{z}:= \frac{1}{2} \l( \frac{\partial \cN}{\partial x} + i\frac{\partial \cN}{\partial y} \r)
\end{align*}
for $z=x+iy$. 
The typical examples are $\cN(u)=\lambda |u|^{1+4/(d-2)}$ or $\lambda |u|^{4/(d-2)}u$ with $\lambda \in \C\setminus\{0\}$. 
\end{remark}

We have the energy $E$ of \eqref{NLDW}, which is defined by
\begin{align*}
	E(u,\partial_t u) = \frac{1}{2} \norm{\nabla u}_{L^2}^2 + \frac{1}{2} \norm{\partial_t u}_{L^2}^2 -\frac{d-2}{2d} \norm{u}_{L^{\frac{2d}{d-2}}}^{\frac{2d}{d-2}}.
\end{align*}
If $u$ is a solution to \eqref{NLDW}, then the energy satisfies 
\begin{align*}
	\frac{d}{dt} E(u(t),\partial_t u(t)) = -\norm{\partial_t u(t)}_{L^2}^2
\end{align*}
for all $t \in (0,T_{\max})$. This means the energy decay. This observation shows us that some global solutions may decay. Indeed,  we can prove that a global solution with a finite Strichartz norm decays to $0$ in the energy space as follows. 

\begin{theorem}
\label{thm1.4}
If $u$ is a global solution of \eqref{NLDW} with $\| u \|_{L_{t,x}^{2(d+1)/(d-2)}([0,\infty))}<\infty$, then $u$ satisfies
\begin{align*}
	\lim_{t \to \infty}  \l(\norm{u(t)}_{H^1} + \norm{\partial_t u(t)}_{L^2}\r) =0.
\end{align*}
\end{theorem}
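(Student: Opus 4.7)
The plan is to combine the energy dissipation identity with the Strichartz estimates from Propositions~\ref{prop1.1} and~\ref{prop1.2}, exploiting the same small-data philosophy that underlies Theorem~\ref{thm1.3}. First, since $\|u\|_{L^{2(d+1)/(d-2)}_{t,x}([0,\infty))}$ is finite, I partition $[0,\infty)$ into finitely many subintervals on each of which $\|u\|_{L^{2(d+1)/(d-2)}_{t,x}}$ is smaller than a fixed constant $\eta$ chosen so that a bootstrap of the inhomogeneous Strichartz estimate absorbs the nonlinearity. Iterating this bootstrap interval by interval yields the uniform a priori bound
\begin{equation*}
M := \sup_{t \geq 0}\bigl(\|u(t)\|_{H^1} + \|\partial_t u(t)\|_{L^2}\bigr) < \infty \qquad\text{and}\qquad \|\jbra{\nabla}^{1/2} u\|_{L^{2(d+1)/(d-1)}_{t,x}([0,\infty))} \leq CM.
\end{equation*}

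Next I fix $\varepsilon > 0$ and pick $T$ large enough that $\|u\|_{L^{2(d+1)/(d-2)}_{t,x}([T,\infty))} < \varepsilon$, which is possible by dominated convergence since the norm on $[0,\infty)$ is finite. For $t \geq T$, Duhamel's formula with initial time $T$ decomposes $(u(t),\partial_t u(t))$ into the free evolution $S(t-T)(u(T),\partial_t u(T))$ and a nonlinear Duhamel integral. I bound the latter via the inhomogeneous Strichartz estimate of Proposition~\ref{prop1.2} with output pair $(q,r) = (\infty,2)$ and the wave-admissible input pair $(\tilde q,\tilde r) = (2(d+1)/(d-1),2(d+1)/(d-1))$, for which the non-resonance hypothesis holds strictly and $\delta = 0$ in the table. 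Using H\"older's inequality and the fractional chain rule,
\begin{equation*}
\|\jbra{\nabla}^{1/2}(|u|^{4/(d-2)}u)\|_{L^{\tilde q'}_t L^{\tilde r'}_x([T,\infty))} \lesssim \|u\|_{L^{2(d+1)/(d-2)}_{t,x}([T,\infty))}^{4/(d-2)}\,\|\jbra{\nabla}^{1/2} u\|_{L^{2(d+1)/(d-1)}_{t,x}([T,\infty))} \leq CM\varepsilon^{4/(d-2)},
\end{equation*}
so the Duhamel integral is dominated in $L^\infty([T,\infty); H^1 \times L^2)$ by $CM\varepsilon^{4/(d-2)}$.

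For the linear evolution I establish $\|S(\tau)(u(T),\partial_t u(T))\|_{H^1 \times L^2} \to 0$ as $\tau \to \infty$ for fixed $T$. The Fourier multipliers comprising $\cD(\tau)$ and $\partial_t\cD(\tau)$ tend to $0$ pointwise on $\R^d \setminus \{0\}$, exponentially fast when $|\xi| > 1/2$ and like $e^{-\tau(1/2 - \sqrt{1/4-|\xi|^2})}$ when $|\xi| < 1/2$, while their supremum over $\tau \geq 0$ is bounded (and, away from the transition set $\{|\xi|=1/2\}$, is dominated by a function integrable against the Fourier transform of the data in $H^1 \times L^2$). Dominated convergence in frequency then yields the claimed strong decay. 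Hence for all $t - T$ sufficiently large, $\|S(t-T)(u(T),\partial_t u(T))\|_{H^1 \times L^2} < \varepsilon$, and combining with the Duhamel bound gives $\|(u(t),\partial_t u(t))\|_{H^1 \times L^2} \leq \varepsilon + CM\varepsilon^{4/(d-2)}$ for all sufficiently large $t$. Since $\varepsilon > 0$ is arbitrary, the theorem follows.

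The main obstacle is the exponent bookkeeping in the second paragraph: the derivative-loss table of Proposition~\ref{prop1.2} must be compatible with a H\"older configuration in which the small factor $\|u\|_{L^{2(d+1)/(d-2)}}^{4/(d-2)}$ can be fully extracted, leaving only Strichartz norms of $\jbra{\nabla}^{1/2} u$ (and of $\nabla u$ for the $H^1$ component of the output) that are already bounded by $M$. A secondary delicate point is justifying the dominating envelope for the linear Fourier multipliers near $|\xi| = 1/2$, the transition frequency between the heat-like and wave-like regimes of the damped wave semigroup.
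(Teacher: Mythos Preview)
Your proof is correct and follows the same overall strategy as the paper: split the solution at a late time into a linear piece (which decays) and a Duhamel tail (which is small by the inhomogeneous Strichartz estimate plus the fractional Leibniz rule, exactly as in the paper's estimate \eqref{eq5.1}). The implementation differs in two respects.

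First, you take Duhamel from the late time $T$, so the data $(u(T),\partial_t u(T))$ already absorbs the history on $[0,T]$; the paper instead keeps Duhamel from $t=0$ and must separately handle a middle integral $\int_0^\tau$, which it rewrites as $\cA(t-\tau)$ applied to an $H^1\times L^2$ element and then treats by the same linear-decay mechanism. Your choice is a little more economical.

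Second, for the decay of the free evolution in $H^1\times L^2$, the paper approximates the data by $C_0^\infty$ functions and invokes the $L^1\!\to\! L^2$ estimates of \cite{IIOWp}, whereas you argue directly by dominated convergence on the Fourier side. Your DCT argument works: the multipliers vanish pointwise on $\R^d\setminus\{0\}$; the uniform-in-$\tau$ envelope near $|\xi|=1/2$ follows from $|\sin(\tau b)/b|\le\tau$ combined with $e^{-\tau/2}$; and for large $|\xi|$ the envelope is $O(\jbra{\xi})$ (for $\partial_t^2\cD$), which is absorbed since $u(T)\in H^1$. The paper's route yields an explicit rate $\jbra{t}^{-d/4}$ on the smoothed part, which you do not get, but that is not needed for the statement.

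One small point: your multiplier discussion names only $\cD(\tau)$ and $\partial_t\cD(\tau)$, but the second component of the linear flow also contains $\partial_t^2\cD(\tau)u(T)$. The same DCT argument covers it with the dominating envelope $C\jbra{\xi}|\hat u(T,\xi)|\in L^2$, so be sure to include it explicitly.
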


\begin{remark}
This is similar to the energy critical nonlinear heat equation. See Gustafson and Roxanas \cite{GuRo17p}.
\end{remark}

\begin{remark}
Theorem \ref{thm1.4} holds for all dimensions $d \geq 3$ since we need to treat the estimate of the difference unlike the local well-posedness. 
\end{remark}

At last, we discuss the blow-up of the solutions to \eqref{NLDW}. We set 
\begin{align*}
	J(\varphi)
	&:=\frac{1}{2} \norm{\nabla \varphi}_{L^2}^2 - \frac{d-2}{2d} \norm{\varphi}_{L^{\frac{2d}{d-2}}}^{\frac{2d}{d-2}},
	\\
	K(\varphi)
	&:= \norm{ \nabla \varphi}_{L^2}^2-\norm{\varphi}_{L^{\frac{2d}{d-2}}}^{\frac{2d}{d-2}}. 
\end{align*}
Then, it is well known that the minimizing problem
\begin{align*}
	\mu := \inf \l\{ J(\varphi): \varphi \in \dot{H}^1 \setminus \{0\}, K(\varphi)=0 \r\}
\end{align*}
is attained by the Talenti function
\begin{align*}
	W(x):= \l\{ 1+ \frac{|x|^2}{d(d-2)}\r\}^{-\frac{d-2}{2}}.
\end{align*}
See \cite{Tal76}. 
Here, the Talenti function satisfies the following nonlinear elliptic equation
\begin{align*}
	-\Delta W = |W|^{\frac{4}{d-2}}W, \quad x \in \R^d. 
\end{align*}
Therefore, $W$ is a static solution to \eqref{NLDW}. Then, we get the following blow-up result. 

\begin{theorem}
\label{thm1.5}
Let $(u_0,u_1) \in H^1(\R^d)\times L^2(\R^d)$ belong to 
\begin{align*}
	\scB:=\{(u_0,u_1) \in H^1(\R^d)\times L^2(\R^d) : E(u_0,u_1) < \mu, K(u_0)<0\}. 
\end{align*}
Then the solution to \eqref{NLDW} blows up in finite time. 
\end{theorem}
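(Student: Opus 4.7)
The plan is to prove Theorem~\ref{thm1.5} by combining invariance of the Payne--Sattinger--type region $\scB$ under the flow with a Levine--Vitillaro concavity argument. For the invariance step, the energy identity $\frac{d}{dt}E(u(t),\partial_t u(t)) = -\norm{\partial_t u(t)}_{L^2}^2 \leq 0$ yields $E(u(t),\partial_t u(t)) \leq E(u_0,u_1) < \mu$ throughout the existence interval $[0,T_+)$. Combined with the algebraic decomposition $J(\varphi) = \tfrac{1}{d}\norm{\nabla\varphi}_{L^2}^2 + \tfrac{d-2}{2d}K(\varphi)$ and the variational characterisation of $\mu$ attained by $W$, a standard scaling-and-continuity argument (pick $\lambda_0\in(0,1)$ with $K(\lambda_0 u(t))=0$ and invoke $J(\lambda_0 u(t))\geq\mu$) yields both the qualitative invariance $K(u(t))<0$ for all $t\in[0,T_+)$ and the quantitative bounds
\[
	\norm{\nabla u(t)}_{L^2}^2 \geq d\mu,\qquad -K(u(t)) \geq c_0 := \tfrac{2d}{d-2}\bigl(\mu - E(u_0,u_1)\bigr) > 0.
\]

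For the blow-up step, suppose toward a contradiction that $T_+(u_0,u_1)=\infty$. Set $H(t) := \mu - E(u(t),\partial_t u(t))$, so $H'(t)=\norm{\partial_t u(t)}_{L^2}^2\geq 0$ and $H(t)\geq H(0)>0$. Following Vitillaro and Gazzola--Squassina, introduce
\[
	L(t) := H(t)^{1-\sigma} + \eps\re(u(t),\partial_t u(t))_{L^2} + \tfrac{\eps}{2}\norm{u(t)}_{L^2}^2
\]
for small parameters $\sigma,\eps > 0$ to be chosen. The crucial cancellation is that the damping-induced $-\eps\re(u,\partial_t u)$ arising in $\frac{d}{dt}\re(u,\partial_t u) = \norm{\partial_t u}_{L^2}^2 - K(u) - \re(u,\partial_t u)$ is exactly absorbed by $\frac{d}{dt}(\tfrac{\eps}{2}\norm{u}_{L^2}^2) = \eps\re(u,\partial_t u)$, leaving
\[
	L'(t) = (1-\sigma)H(t)^{-\sigma}\norm{\partial_t u}_{L^2}^2 + \eps\bigl(\norm{\partial_t u}_{L^2}^2 - K(u(t))\bigr).
\]
Invoking Step~1 and the algebraic identity $-K(u) = \tfrac{2}{d-2}\norm{\nabla u}_{L^2}^2 + \tfrac{d}{d-2}\norm{\partial_t u}_{L^2}^2 + \tfrac{2d}{d-2}H(t) - \tfrac{2d\mu}{d-2}$ together with $\norm{\nabla u}_{L^2}^2\geq d\mu$, one bounds $L'(t)$ below by a positive combination of $H(t)$, $\norm{\partial_t u}_{L^2}^2$, and $\norm{\nabla u}_{L^2}^2$. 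Controlling $L(t)^{1/(1-\sigma)}$ from above by the same quantities via H\"older, the Sobolev embedding $\norm{u}_{L^{2d/(d-2)}} \cleq \norm{\nabla u}_{L^2}$, and Young's inequality produces
\[
	L'(t) \geq C\,L(t)^{1/(1-\sigma)},
\]
which forces $L\to\infty$ in finite time. Since $L(t)$ is dominated by the $H^1\times L^2$ norm of $(u(t),\partial_t u(t))$ (up to bounded $\sigma$-powers of $H$), this contradicts $T_+=\infty$.

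The main obstacle is controlling the $\norm{u}_{L^2}^2$ term in $L(t)$ on the whole space $\R^d$, where no Poincar\'e inequality is available to dominate $\norm{u}_{L^2}$ by $\norm{\nabla u}_{L^2}$, so as to close the differential inequality $L^{1/(1-\sigma)} \cleq L'$. This will be resolved by choosing $\sigma$ small enough (depending on $d$) so that the critical power of $\norm{u}_{L^2}^2$ in $L^{1/(1-\sigma)}$ interpolates between $\norm{u}_{L^2}^2$ itself and $\norm{u}_{L^{2d/(d-2)}}^{2d/(d-2)} \cleq \norm{\nabla u}_{L^2}^{2d/(d-2)}$, or alternatively by augmenting $L$ with a time-integral term that absorbs the offending cross contribution; fine-tuning $\sigma,\eps$ in terms of the initial data, and checking positivity of $L$ along the flow (which requires the damping-induced smallness of $\eps\re(u,\partial_t u)$ relative to $H^{1-\sigma}$), is where the bulk of the technical work sits.
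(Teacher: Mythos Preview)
Your invariance step is fine and matches the paper's argument (the paper rules out $u(t_*)=0$ via positivity of $K$ near $0$, but your scaling argument with $\lambda_0$ is equivalent). The quantitative bounds $\norm{\nabla u(t)}_{L^2}^2\geq d\mu$ and $-K(u(t))\geq \tfrac{2d}{d-2}(\mu-E(u_0,u_1))$ are correct consequences.

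The blow-up step, however, takes a genuinely different route from the paper, and the obstacle you flag at the end is a real gap that your proposed fixes do not close. The paper does \emph{not} use a Vitillaro--Gazzola--Squassina functional $L(t)$; it follows Ohta and works directly with $I(t)=\tfrac12\norm{u(t)}_{L^2}^2$. From $I''+I'=\norm{\partial_t u}_{L^2}^2-K(u)$ and the variational bound $\tfrac{1}{d}\norm{\nabla u(t)}_{L^2}^2\geq\mu$ one gets $I''+I'\geq\tfrac{2d}{d-2}(\mu-E(t))$. The crucial extra ingredient is a monotonicity lemma: after an initial transient one has $I(t),I'(t)>0$ and $\frac{d}{dt}\bigl[(E(t)-\mu)\,I(t)^{-(d-1)/(d-2)}\bigr]\leq 0$, proved using $E'=-\norm{\partial_t u}_{L^2}^2$ together with the Cauchy--Schwarz bound $|I'|\leq\norm{u}_{L^2}\norm{\partial_t u}_{L^2}$. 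This yields $\mu-E(t)\geq c\,I(t)^{(d-1)/(d-2)}$, hence $I''+I'\geq c\,I^{(d-1)/(d-2)}$, and an ODE lemma of Li--Zhou and Souplet forces finite-time blow-up of $I$.

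Your interpolation suggestion cannot rescue the Vitillaro scheme on $\R^d$: closing $L^{1/(1-\sigma)}\cleq L'$ would require dominating $\norm{u}_{L^2}^{2/(1-\sigma)}$ by a combination of $H$, $\norm{\partial_t u}_{L^2}^2$ and $\norm{u}_{L^{2d/(d-2)}}^{2d/(d-2)}$ (or $\norm{\nabla u}_{L^2}^2$), and no such inequality holds without a Poincar\'e-type embedding; interpolation between $L^2$ and $L^{2d/(d-2)}$ controls intermediate $L^p$ norms by \emph{both} endpoints, not a power of the $L^2$ norm by the critical norm alone. The Vitillaro machinery is tailored to bounded domains precisely because $L^{p+1}\hookrightarrow L^2$ there; on the whole space the $\tfrac{\eps}{2}\norm{u}_{L^2}^2$ term you introduced to absorb the damping remains uncontrolled in the final differential inequality. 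Ohta's method sidesteps this entirely: the $L^2$ norm enters only as $I$ itself and through $|I'|\leq\norm{u}_{L^2}\norm{\partial_t u}_{L^2}$, never in competition with gradient or critical Lebesgue norms, which is exactly why the monotonicity of $(E-\mu)\,I^{-(d-1)/(d-2)}$ closes.
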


\begin{remark}
The theorem means that the static solution $W$ is strongly unstable for \eqref{NLDW}. 
\end{remark}

\begin{remark}
The proof of Theorem \ref{thm1.5} is essentially given by Ohta \cite{Oht97}. He showed the blow-up result for abstract setting by the method of an ordinary differential inequality instead of by the so-called concavity argument, which is well applied to wave or Klein-Gordon equation. 
\end{remark}

We collect some notations. For the exponent $p$, we denote the H\"{o}lder conjugate of $p$ by $p'$. The bracket $\jbra{\cdot}$ is Japanese bracket \textit{i.e.} $\jbra{a}:=(1+|a|^2)^{1/2}$. 

We use $A \lesssim B$ to denote the estimate $A \leq CB$ with some constant $C>0$.
The notation $A \sim B$ stands for $A \lesssim B$ and $A \lesssim B$.

Let $\chi_{\leq1} \in C_0^{\infty} (\mathbb{R})$ be a cut-off function satisfying $\chi_{\leq1} (r)=1$ for $|r| \le 1$ and $\chi_{\leq1}(r) =0$ for $|r| \ge 2$ and let $\chi_{>1}=1-\chi_{\leq 1}$. 

For a function $f : \mathbb{R}^n \to \mathbb{C}$,
we define the Fourier transform and the inverse Fourier transform by
\begin{align*}
	\mathcal{F} [f] (\xi) = \hat{f} (\xi) = (2\pi)^{-n/2}
		\int_{\mathbb{R}^n} e^{-i x \xi} f(x) \,dx,\quad
	\mathcal{F}^{-1} [f] (x) = (2\pi)^{-n/2}
		\int_{\mathbb{R}^n} e^{i x \xi} f(\xi) \,dx.
\end{align*}

For a measurable function $m = m(\xi)$, we denote
the Fourier multiplier $m(\nabla)$ by
\begin{align*}
	m(\nabla) f (x) = \cF^{-1} \left[ m(\xi) \hat{f}(\xi) \right] (x).
\end{align*}

For $s \in \mathbb{R}$ and $1\leq p \leq \infty$,
we denote the usual Sobolev space by
\begin{align*}
	W^{s,p}(\mathbb{R}^d) := \left\{ f \in \mathcal{S}' (\mathbb{R}^d) :
		\| f \|_{W^{s,p}} = \| \langle \nabla \rangle^s f \|_{L^p} < \infty \right\}.
\end{align*}
We write $H^s(\mathbb{R}^d) := W^{s,2}(\mathbb{R}^d)$ for simplicity. Let $\dot{W}^{s,p}(\mathbb{R}^d)$ and $\dot{H}^{s}(\R^d)$ denote the corresponding homogeneous Sobolev spaces. 

We define $P_{\leq1}:= \cF^{-1} \chi_{\leq 1} \cF$, $P_{>1}:=\cF^{-1} \chi_{> 1} \cF$, and 
\begin{align*}
	P_{N} = \cF^{-1} \l( \chi_{\leq1}\l( \frac{\xi}{N}\r) -  \chi_{\leq1}\l( \frac{2\xi}{N}\r) \r)\cF
\end{align*}
for $N \in 2^{\Z}$. 
For a time interval $I$ and $F:I\times \R^d \to \C$, we set 
\begin{align*}
	\norm{F}_{L^{q}(I:L^r)(\R^d)}:=  \l(\int_{I} \norm{F(t,\cdot)}_{L^r(\R^d)}^q dt \r)^{1/q}
\end{align*}
and $L_{t,x}^{q}(I):=L^{q}(I:L^{q}(\R^d))$. We sometimes use $L_s^p$ and $L_t^p$ to uncover time variables $s$ and $t$.

This paper is structured as follows. 
Section \ref{sec2} is devoted to show the Strichartz estimates. In particular, we give the Strichartz estimates for low frequency part in Section \ref{sec2.1} and that for high frequency part  in Section \ref{sec2.2}. In Setion \ref{sec3}, we prove the locall well-posedness of \eqref{NLDW} by the Strichartz estimates. In Section \ref{sec4}, we discuss the decay of the global solutions to \eqref{NLDW} with a bounded space-time norm. Section \ref{sec5} contains the proof of the blow-up result.



\section{The Strichartz estimates}
\label{sec2}

We split $\cD$ to low frequency part $\cD_{l}$ and high frequency part $\cD_{h}$ as follows. 
\begin{align*}
	\cD_{l}(t)&:=\cD(t)P_{\leq1},
	\\
	\cD_{h}(t)&:=\cD(t)P_{>1}. 
\end{align*}
In this section, we prove the Strichartz estimates for low and high frequency parts respectively.


\subsection{The Strichartz estimates for low frequency part}
\label{sec2.1}

We have the $L^p$-$L^q$ type estimates from \cite{CFZ14} and \cite{IIOWp}. These estimates are similar to those of the heat equation. 

\begin{lemma}[$L^{r}$-$L^{\tilde{r}}$ estimate for low frequency part \cite{CFZ14,IIOWp}]
\label{lem2.1}
Let $1 \leq \tilde{r} \leq r \leq \infty$ and $\sigma \geq 0$. Then, we have
\begin{align*}
	\norm{|\nabla|^{\sigma} \cD_{l}(t) f}_{L^r} 
	\cleq \jbra{t}^{-\frac{d}{2}\l( \frac{1}{\tilde{r}} - \frac{1}{r}\r)-\frac{\sigma}{2}} \norm{f}_{L^{\tilde{r}}}, 
\end{align*}
for any $t > 0$ and $f \in L^{\tilde{r}}(\R^d)$. We also have
\begin{align*}
	\norm{|\nabla|^{\sigma}  \partial_t \cD_{l}(t) f}_{L^r} 
	&\cleq \jbra{t}^{-\frac{d}{2}\l( \frac{1}{\tilde{r}} - \frac{1}{r}\r)-\frac{\sigma}{2}-1} \norm{f}_{L^{\tilde{r}}},
	\\
	\norm{|\nabla|^{\sigma}  \partial_t^2 \cD_{l}(t) f}_{L^r} 
	&\cleq \jbra{t}^{-\frac{d}{2}\l( \frac{1}{\tilde{r}} - \frac{1}{r}\r)-\frac{\sigma}{2}-2} \norm{f}_{L^{\tilde{r}}}.
\end{align*}
\end{lemma}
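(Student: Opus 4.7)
The plan is to exploit the explicit Fourier representation of $\cD(t)$ by splitting the symbol on the support of $\chi_{\leq 1}$ into a heat-like piece and an exponentially small remainder. Using the formula for $L(t,\xi)$, one checks that for $|\xi|<1/2$,
\[
e^{-t/2}L(t,\xi) = \frac{e^{-t\lambda_-(\xi)} - e^{-t\lambda_+(\xi)}}{2\sqrt{1/4-|\xi|^2}},
\qquad \lambda_\pm(\xi)=\tfrac12 \pm \sqrt{\tfrac14-|\xi|^2},
\]
so that $\lambda_-(\xi) = |\xi|^2 + O(|\xi|^4)$ plays the role of the heat symbol near $\xi=0$, while $\lambda_+(\xi)\geq 1/2$ uniformly. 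On the annulus $1/2<|\xi|\leq 2$ the full symbol is $e^{-t/2}$ times a smooth compactly supported oscillatory function of $\xi$, hence exponentially small in $t$.

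First I would treat the ``heat branch'', i.e.\ the multiplier $e^{-t\lambda_-(\xi)}\chi_{\leq 1/4}(\xi)$. Since $\lambda_-(\xi)\ceq |\xi|^2$ on this region and the symbol together with all its $\xi$-derivatives is comparable to that of $e^{-t|\xi|^2}\chi_{\leq 1/4}(\xi)$ with constants independent of $t$, the associated kernel inherits the pointwise bounds of the frequency-localised heat kernel. Standard estimates for $|\nabla|^\sigma e^{t\Delta}P_{\leq 1/4}$ then yield the $L^{\tilde r}\to L^r$ bound with weight $\jbra{t}^{-\frac{d}{2}(1/\tilde r-1/r)-\sigma/2}$: for $t\geq 1$ the usual heat decay applies; for $t\leq 1$ boundedness combined with Bernstein's inequality (exploiting the compact frequency support) supplies the $|\nabla|^\sigma$ gain at no time cost.

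Next, I would dispose of the remainders: the $e^{-t\lambda_+(\xi)}$ contribution on $|\xi|\leq 1/4$ and the full contribution from the annulus $1/4\leq |\xi|\leq 2$. Both are Fourier multipliers of the form $e^{-ct}\,m(t,\xi)$, with $m(t,\cdot)$ smooth and compactly supported uniformly in $t$ and at most polynomial growth of derivatives in $t$; the associated spatial kernel is Schwartz with norm $\cleq e^{-c't}$. Young's inequality gives an $L^{\tilde r}\to L^r$ bound $\cleq e^{-c't}\norm{f}_{L^{\tilde r}}$, which is much stronger than the claimed weight.

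For $\partial_t\cD_l$ and $\partial_t^2\cD_l$, differentiating the symbol in $t$ multiplies the heat branch by a factor of $\lambda_-(\xi)\ceq |\xi|^2$ per derivative and preserves the exponential smallness of the remainders (up to harmless polynomial prefactors in $t$). The extra factor $|\xi|^{2k}$ combines with $e^{-t|\xi|^2}$ to yield the additional weight $\jbra{t}^{-k}$, via $|\xi|^{2k}e^{-t|\xi|^2}\cleq \jbra{t}^{-k}e^{-t|\xi|^2/2}$ for $t\geq 1$ and Bernstein for $t\leq 1$. The only technical nuisance is gluing the small-time and large-time regimes so that the singular heat weight $t^{-\cdots}$ is replaced by the bounded weight $\jbra{t}^{-\cdots}$, which is precisely where the low-frequency localisation together with Bernstein is essential; this is routine once the decomposition above is in place, and indeed the statement is already recorded in \cite{CFZ14,IIOWp}, so I would conclude by invoking those references.
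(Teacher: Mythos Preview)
The paper does not actually prove this lemma; it simply states the result and cites \cite{CFZ14,IIOWp} as the source, so there is no ``paper's own proof'' to compare against. Your sketch is a correct and standard outline of how such estimates are obtained in those references---decomposing the low-frequency symbol into the heat-like branch $e^{-t\lambda_-(\xi)}$ and the exponentially damped remainders, then combining heat-kernel decay for $t\geq 1$ with Bernstein on the compact frequency support for $t\leq 1$---and you yourself end by invoking the same citations, which is exactly what the paper does.
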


By these $L^p$-$L^q$ type estimates, we obtain the following homogeneous Strichartz estimate.

\begin{lemma}[Homogeneous Strichartz estimate for low frequency part]
\label{lem2.2}
Let $\sigma\geq 0$. Let $1 \leq \tilde{r} \leq r \leq \infty$ and  $1\leq q \leq \infty$. 
Assume that they satisfy 
\begin{align*}
	\frac{d}{2} \l( \frac{1}{\tilde{r}} - \frac{1}{r}\r)  > \frac{1}{q},
\end{align*}
or
\begin{align*}
	\frac{d}{2} \l( \frac{1}{\tilde{r}} - \frac{1}{r}\r)  = \frac{1}{q}
	\text{ and }
	q >\tilde{r}>1.
\end{align*}
Then, for any  $f \in L^{\tilde{r}}(\R^d)$, 
\begin{align*}
	\norm{ \jbra{\nabla}^{\sigma} \cD_{l}(t) f}_{L^{q}(I:L^r(\R^d))} 
	\cleq  \norm{f}_{L^{\tilde{r}}}, 
\end{align*}
where $I \subset [0,\infty)$ is a time interval and the implicit constant is independent of $I$. Moreover, we also have
\begin{align*}
	\norm{ \jbra{\nabla}^{\sigma} \partial_t \cD_{l}(t) f}_{L^{q}(I:L^r(\R^d))} 
	&\cleq  \norm{f}_{L^{\tilde{r}}},
	\\
	\norm{ \jbra{\nabla}^{\sigma} \partial_t^2 \cD_{l}(t) f}_{L^{q}(I:L^r(\R^d))} 
	&\cleq  \norm{f}_{L^{\tilde{r}}}.
\end{align*}
\end{lemma}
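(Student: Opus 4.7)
The plan is to pass from the pointwise-in-time dispersive bound of Lemma~\ref{lem2.1} to the space-time estimate by taking the $L^q$-norm in time, treating the borderline case via real interpolation. Since $\cD_l(t) = \cD(t) P_{\leq1}$ is frequency-localized to $|\xi| \leq 2$, the multiplier $\jbra{\nabla}^{\sigma}$ acts boundedly on the image, and Lemma~\ref{lem2.1} (applied with $\sigma=0$ to $\jbra{\nabla}^{\sigma}\cD_l(t)f$) yields
\begin{equation*}
	\norm{\jbra{\nabla}^{\sigma} \cD_l(t) f}_{L^r}
	\cleq \jbra{t}^{-\alpha} \norm{f}_{L^{\tilde{r}}},
	\qquad
	\alpha := \frac{d}{2}\l( \frac{1}{\tilde{r}} - \frac{1}{r}\r),
\end{equation*}
together with the improved rates $\jbra{t}^{-\alpha-1}$ and $\jbra{t}^{-\alpha-2}$ for $\partial_t\cD_l$ and $\partial_t^2\cD_l$ respectively.

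In the strict case $\alpha>1/q$, the weight $\jbra{t}^{-\alpha}$ belongs to $L^q([0,\infty))$, so taking $L^q_t$ of the above pointwise bound on any subinterval $I\subset[0,\infty)$ immediately produces $\norm{\jbra{\nabla}^{\sigma}\cD_l f}_{L^q(I:L^r)} \cleq \norm{f}_{L^{\tilde{r}}}$, with constant independent of $I$. The $\partial_t$ and $\partial_t^2$ estimates are even easier, thanks to the better temporal decay.

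The only delicate situation is the endpoint $\alpha q=1$ with $q>\tilde{r}>1$. Here $\jbra{t}^{-\alpha}$ lies in $L^{q,\infty}_t$ but not in $L^q_t$, so the pointwise bound only delivers the weak-type inequality
\begin{equation*}
	\norm{\jbra{\nabla}^{\sigma}\cD_l f}_{L^{q,\infty}_t L^r_x}
	\cleq \norm{f}_{L^{\tilde{r}}}.
\end{equation*}
To upgrade to strong $L^q_t$ I would perturb $\tilde{r}$ symmetrically: pick $\tilde{r}_0,\tilde{r}_1$ with $1<\tilde{r}_0<\tilde{r}<\tilde{r}_1\leq r$ and define $1/q_i := \frac{d}{2}(1/\tilde{r}_i-1/r)$, so that $q_0<q<q_1$ and the same weak-type estimate is valid at both $(\tilde{r}_i,q_i)$. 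Real interpolation with second parameter $q$ then produces a bounded linear map $L^{\tilde{r},q}(\R^d)\to L^q_t(I:L^r)$, and the continuous embedding $L^{\tilde{r}}\hookrightarrow L^{\tilde{r},q}$, which holds precisely because $q>\tilde{r}$, completes the proof. The hypothesis $\tilde{r}>1$ is used to keep $\tilde{r}_0$ strictly above $1$, inside the admissible range of Lemma~\ref{lem2.1}.

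The principal technical hurdle is exactly this endpoint: the dispersive estimate alone produces only a weak-in-time bound, and the compatibility condition $q>\tilde{r}$ is needed to invoke Banach-valued real interpolation and convert the weak-type conclusion into the desired strong Strichartz estimate.
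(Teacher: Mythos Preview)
Your proof is correct and essentially the same as the paper's: both integrate the pointwise bound of Lemma~\ref{lem2.1} directly in the strict case and, at the endpoint $\alpha q=1$, interpolate between two weak-type estimates in the input exponent $\tilde r$. The only cosmetic difference is that the paper invokes the Marcinkiewicz interpolation theorem (taking one endpoint at $(\tilde r_1,q_1)=(r,\infty)$), whereas you phrase the same step via real interpolation and the Lorentz embedding $L^{\tilde r}\hookrightarrow L^{\tilde r,q}$ for $q>\tilde r$.
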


\begin{proof}
These Strichartz estimates are same as those of the heat equation. Thus, the same proof does work. 
However, we give the proof for reader's convenience.

We first consider the case of $\frac{d}{2}(1/\tilde{r}-1/r)>1/q$. By the $L^r$-$L^{\tilde{r}}$ estimate (Lemma \ref{lem2.1}), 
\begin{align*}
	\norm{ \jbra{\nabla}^{\sigma}\cD_{l}(t) f}_{L^r}
	&\cleq \norm{\cD_{l}(t) f}_{L^r}+\norm{|\nabla|^{\sigma}\cD_{l}(t) f}_{L^r}
	\\
	&\cleq \jbra{t}^{-\frac{d}{2}\l( \frac{1}{\tilde{r}} - \frac{1}{r}\r)} \norm{f}_{L^{\tilde{r}}}
	+ \jbra{t}^{-\frac{d}{2}\l( \frac{1}{\tilde{r}} - \frac{1}{r}\r)-\frac{\sigma}{2}} \norm{f}_{L^{\tilde{r}}}
	\\
	&\cleq \jbra{t}^{-\frac{d}{2}\l( \frac{1}{\tilde{r}} - \frac{1}{r}\r)} \norm{f}_{L^{\tilde{r}}}.
\end{align*}
Then, we obtain
\begin{align*}
	\norm{ \jbra{\nabla}^{\sigma}\cD_{l}(t) f}_{L^{q}(I:L^r(\R^d))} 
	\cleq \norm{\jbra{t}^{-\frac{d}{2} \l( \frac{1}{\tilde{r}} - \frac{1}{r}\r) } \norm{f}_{L^{\tilde{r}}}}_{L^q([0,\infty))}
	\cleq  \norm{f}_{L^{\tilde{r}}}.
\end{align*}
Next, we consider the second case. We set $Tf:= \norm{ \jbra{\nabla}^{\sigma}\cD_{l}(t) f}_{L^r(\R^d)}$ and $(q_{1},r_{1}):=(\infty,r)$ and $(q_{2},r_{2})=(\rho,\gamma)$, where $(\rho,\gamma)$ satisfies $\frac{d}{2} \l( 1/\gamma - 1/r\r)  = 1/\rho$ and $\rho,\gamma>1$. Then $T$ is sub-additive and we have $T:L^{r_{j}}(\R^d) \to L^{q_{j},\infty}([0,\infty))$ for $j=1,2$. Indeed, we have
\begin{align*}
	\norm{Tf(t)}_{L^\infty(I)} 
	&\cleq \norm{\jbra{\nabla}^{\sigma} \cD_{l}(t) f}_{L^{\infty}(I:L^r(\R^d))} 
	\cleq \norm{f}_{L^r},
	\\
	\norm{Tf(t)}_{L^{\rho,\infty}(I)} 
	&\cleq \norm{\jbra{\nabla}^{\sigma} \cD_{l}(t) f}_{L^{\rho,\infty}(I:L^r(\R^d))} 
	\cleq \norm{f}_{L^{\gamma}}.
\end{align*}
If $\rho \geq \gamma$, we can use the Marcinkiewicz interpolation theorem so that we have
\begin{align*}
	\norm{\cD_{l}(t) f}_{L^{q}(I:L^r(\R^d))} 
	\cleq  \norm{f}_{L^{\tilde{r}}}, 
\end{align*}
for $(q,\tilde{r})$ satisfying $q>\tilde{r}>1$ and
\begin{align*}
	\frac{1}{q} = \frac{1-\theta}{q_1} + \frac{\theta}{q_2}, \quad \frac{1}{\tilde{r}} = \frac{1-\theta}{r_1} + \frac{\theta}{r_2}, \quad 0< \theta <1. 
\end{align*}
This means that the desired inequality holds for $(q,r)$ such that $\frac{d}{2}(1/\tilde{r}-1/r)=1/q$ and $q>\tilde{r}>1$. See also \cite{Wei81,Gig86}. In the same way, we get the second and the third inequalities. 
\end{proof}

\begin{remark}
As stated in Remark \ref{rmk1.1},  the Strichartz estimate in the heat end-point case $(q,r)=(2,2d/(d-2))$ holds for $d \geq 3$. However, we exclude the end-point case in Lemma \ref{lem2.2} since it is not clear whether the end-point Strichartz estimate holds or not for $q=\tilde{r}$ and $\tilde{r}\neq 2$. We give the proof of the Strichartz estimate in the end-point case $(q,r)=(2,2d/(d-2))$ for $d \geq 3$ in Section \ref{sec2.3} (see Lemma \ref{lem2.11}). 
\end{remark}

\begin{lemma}[Inhomogeneous Strichartz estimate for low frequency part]
\label{lem2.3}
Let $\sigma\geq 0$. Let $1 \leq \tilde{r}' \leq r \leq \infty$ and  $1\leq q, \tilde{q} \leq \infty$. 
Assume that they satisfy 
\begin{align*}
	\frac{d}{2}\l( \frac{1}{2} - \frac{1}{r}\r) +\frac{d}{2}\l( \frac{1}{2} - \frac{1}{\tilde{r}}\r)
	>\frac{1}{q} + \frac{1}{\tilde{q}},
\end{align*}
\begin{align*}
	\frac{d}{2} \l( \frac{1}{2} - \frac{1}{r}\r) + \frac{d}{2} \l( \frac{1}{2} - \frac{1}{\tilde{r}}\r) 
	= \frac{1}{q} + \frac{1}{\tilde{q}}
	\text{ and }
	1< \tilde{q}' < q<\infty,
\end{align*}
or 
\begin{align*}
	(q,r)=(\tilde{q},\tilde{r})=(\infty,2).
\end{align*}
Then it holds that 
\begin{align*}
	\norm{ \jbra{\nabla}^{\sigma} \int_{0}^{t} \cD_{l}(t-s) F(s) ds}_{L^{q}(I:L^r(\R^d))}
	&\cleq \norm{F}_{L^{\tilde{q}'}(I:L^{\tilde{r}'}(\R^d))},
	\\
	\norm{\int_{0}^{t} \partial_t \cD_{l}(t-s) F(s) ds}_{L^{q}(I:L^r(\R^d))}
	&\cleq \norm{F}_{L^{\tilde{q}'}(I:L^{\tilde{r}'}(\R^d))},
\end{align*}
where $I \subset [0,\infty)$ is a time interval such that $0 \in \overline{I}$ and the implicit constant is independent of $I$. 
\end{lemma}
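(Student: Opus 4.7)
The strategy is to reduce the operator estimate to a scalar convolution inequality on the time axis, then invoke Young's inequality, the Hardy--Littlewood--Sobolev (HLS) inequality, or a direct Minkowski bound, according to which of the three admissibility clauses is in force.

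Since $r, \tilde{r} \geq 2$ forces $\tilde{r}' \leq 2 \leq r$, Lemma \ref{lem2.1} (applied once without derivative and once with $\sigma$, then summed) yields the pointwise-in-time bound
\[
	\norm{\jbra{\nabla}^\sigma \cD_l(t-s) F(s)}_{L^r} \lesssim \jbra{t-s}^{-\alpha} \norm{F(s)}_{L^{\tilde{r}'}},
\]
where $\alpha := \tfrac{d}{2}(\tfrac{1}{2}-\tfrac{1}{r}) + \tfrac{d}{2}(\tfrac{1}{2}-\tfrac{1}{\tilde r})$. Minkowski's integral inequality then gives
\[
	\Bigl\|\jbra{\nabla}^\sigma \int_0^t \cD_l(t-s) F(s)\,ds\Bigr\|_{L^r} \lesssim \int_0^t \jbra{t-s}^{-\alpha} \norm{F(s)}_{L^{\tilde{r}'}}\, ds,
\]
reducing matters to taking the $L^q(I)$ norm of the right-hand side, a convolution on $\R$ after extending $\norm{F(s)}_{L^{\tilde r'}}$ by zero off $I$.

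In the strict case $\alpha > \tfrac{1}{q} + \tfrac{1}{\tilde q}$, Young's inequality closes the argument, because the kernel $\jbra{\cdot}^{-\alpha}$ belongs to $L^{p_2}(\R)$ for $\tfrac{1}{p_2}=\tfrac{1}{q}+\tfrac{1}{\tilde q}$ (the integrability $\alpha p_2 > 1$ is exactly the strict inequality). In the equality case $\alpha = \tfrac{1}{q}+\tfrac{1}{\tilde q}$ with $1<\tilde q' < q<\infty$, the hypothesis $\tilde q' < q$ forces $\alpha < 1$, so the kernel lies in the weak-Lebesgue space $L^{1/\alpha,\infty}(\R)$; the Hardy--Littlewood--Sobolev inequality, whose admissibility conditions $1<\tilde q'<q<\infty$ match the hypothesis exactly, yields the desired bound. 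Finally, in the energy case $(q,r)=(\tilde q,\tilde r)=(\infty,2)$, Lemma \ref{lem2.1} with $r=\tilde r=2$, $\sigma=0$ gives uniform $L^2$-boundedness of $\cD_l(t-s)$, and Minkowski in time directly produces the estimate.

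The inequality for $\partial_t \cD_l$ is handled identically: Lemma \ref{lem2.1} supplies the stronger decay $\jbra{t-s}^{-\alpha-1}$, which only improves all three sub-arguments. The main obstacle is the equality case, since then the kernel just fails classical Young integrability; it is precisely the weak-type boundedness and the HLS inequality (and the explicit role of the condition $1<\tilde q'<q<\infty$) that make this borderline case work, while the strict and energy cases are essentially routine.
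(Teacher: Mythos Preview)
Your proof is correct and follows essentially the same route as the paper: apply the pointwise-in-time $L^r$--$L^{\tilde r'}$ bound from Lemma~\ref{lem2.1}, reduce via Minkowski to a scalar time-convolution, then close with Young's inequality in the strict case, Hardy--Littlewood--Sobolev in the equality case, and a trivial bound in the energy case. One small inaccuracy: you write ``Since $r,\tilde r\ge 2$ forces $\tilde r'\le 2\le r$,'' but the lemma does not assume $r,\tilde r\ge 2$; it assumes $1\le \tilde r'\le r\le\infty$ directly, which is exactly what Lemma~\ref{lem2.1} needs, so the extra justification is unnecessary (and not actually derivable from the stated hypotheses).
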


\begin{proof}
We only show the first estimate since the second can be proved similarly. 
Applying the $L^r$-$L^{\tilde{r}}$ estimate (Lemma \ref{lem2.1}), we obtain
\begin{align*}
	\norm{\jbra{\nabla}^{\sigma} \int_{0}^{t} \cD_{l}(t-s) F(s) ds}_{L^{q}(I:L^r(\R^d))}
	&\cleq \norm{\int_{0}^{t} \norm{\jbra{\nabla}^{\sigma} \cD_{l}(t-s) F(s)}_{L^r} ds}_{L^{q}(I)}
	\\
	& \cleq \norm{\int_{0}^{t} \jbra{t-s}^{-\frac{d}{2}\l( \frac{1}{\tilde{r}'} - \frac{1}{r}\r)} \norm{F(s)}_{L^{\tilde{r}'}} ds}_{L^{q}(I)}.
\end{align*}
When $\frac{d}{2} \l( \frac{1}{2} - \frac{1}{r}\r) + \frac{d}{2} \l( \frac{1}{2} - \frac{1}{\tilde{r}}\r)> \frac{1}{q} + \frac{1}{\tilde{q}}$, by the Young inequality, we obtain
\begin{align*}
	 \norm{\int_{0}^{t} \jbra{t-s}^{-\frac{d}{2}\l( \frac{1}{\tilde{r}'} - \frac{1}{r}\r)} \norm{F(s)}_{L^{\tilde{r}'}} ds}_{L^{q}(I)}
	 &\cleq \norm{\jbra{t}^{-\frac{d}{2}\l( \frac{1}{\tilde{r}'} - \frac{1}{r}\r)}}_{L^{\frac{q\tilde{q}}{\tilde{q}+q}}} \norm{F}_{L^{\tilde{q}'}(I:L^{\tilde{r}'}(\R^d))}
	 \\
	 &\cleq \norm{F}_{L^{\tilde{q}'}(I:L^{\tilde{r}'}(\R^d))}.
\end{align*}
On the other hand, when $\frac{d}{2} \l( \frac{1}{2} - \frac{1}{r}\r) + \frac{d}{2} \l( \frac{1}{2} - \frac{1}{\tilde{r}}\r)= \frac{1}{q} + \frac{1}{\tilde{q}}$ and $1<\tilde{q}'<q<\infty$, applying the Hardy--Littlewood--Sobolev inequality, we obtain
\begin{align*}
	 \norm{\int_{0}^{t} \jbra{t-s}^{-\frac{d}{2}\l( \frac{1}{\tilde{r}'} - \frac{1}{r}\r)} \norm{F(s)}_{L^{\tilde{r}'}} ds}_{L^{q}(I)}
	 \cleq \norm{F}_{L^{\tilde{q}'}(I:L^{\tilde{r}'}(\R^d))}.
\end{align*}
When $(q,r)=(\tilde{q},\tilde{r})=(\infty,2)$, the inequality is trivial. 
This completes the proof. 
\end{proof}


\subsection{The Strichartz estimates for high frequency part}
\label{sec2.2}

Since we have
\begin{align*}
	\cD_{h}(t) = e^{-\frac{t}{2}} \cF^{-1} \frac{e^{it \sqrt{|\xi|^2-1/4}} - e^{-it \sqrt{|\xi|^2-1/4}}}{2i \sqrt{|\xi|^2-1/4}} \chi_{>1} (\xi) \cF,
\end{align*}
it is enough to estimate
\begin{align*}
	e^{-t/2} e^{\pm it\sqrt{-\Delta-1/4}} P_{>1}.
\end{align*}

\begin{lemma}[Homogeneous Strichartz estimate  for high frequency part]
\label{lem2.4}
Let $d\geq 2$. Let $2\leq r < \infty$ and $2 \leq q \leq \infty$. We set $\gamma:= \max \{ d(1/2-1/r)-1/q, \frac{d+1}{2}(1/2-1/r) \}$. Then, we have 
\begin{align*}
	\norm{e^{-t/2} e^{\pm it\sqrt{-\Delta-1/4}} P_{>1} f}_{L^{q}(I:L^r(\R^d))}
	\cleq \norm{|\nabla|^{\gamma} f}_{L^2}
\end{align*}
where $I \subset [0,\infty)$ is a time interval and the implicit constant is independent of $I$. 
In particular, we have
\begin{align*}
	\norm{\cD_{h}(t) f}_{L^{q}(I:L^r(\R^d))}
	&\cleq \norm{|\nabla|^{\gamma} \jbra{\nabla}^{-1} f}_{L^2},
	\\
	\norm{ \partial_t \cD_{h}(t) f}_{L^{q}(I:L^r(\R^d))}
	&\cleq \norm{|\nabla|^{\gamma} f}_{L^2},
	\\
	\norm{ \partial_t^2 \cD_{h}(t) f}_{L^{q}(I:L^r(\R^d))}
	&\cleq \norm{|\nabla|^{\gamma+1} f}_{L^2}.
\end{align*}
\end{lemma}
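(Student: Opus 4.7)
My plan is to implement a frequency-localized $TT^*$ argument, combining a standard wave-type dispersive estimate for the high-frequency part with the exponential damping factor $e^{-t/2}$, and then to sum dyadic bounds via a Littlewood--Paley square function.

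First, by dyadic stationary phase on the annulus $\{|\xi|\sim N\}$ for $N\ge 1$ (exploiting that $\sqrt{|\xi|^2-1/4}$ and $|\xi|$ share the same asymptotic growth and the same rank-$(d-1)$ curvature on their level sets), I would obtain the frequency-localized dispersive estimate
\begin{equation*}
\norm{e^{\pm it\sqrt{-\Delta-1/4}} P_N f}_{L^\infty_x} \cleq N^d (1+N|t|)^{-(d-1)/2}\norm{P_N f}_{L^1_x}.
\end{equation*}
Interpolation with the $L^2$ isometry, followed by insertion of the damping factor, yields that $V_N(t):=e^{-t/2}e^{it\sqrt{-\Delta-1/4}}P_N$ satisfies
\begin{equation*}
\norm{V_N(t)V_N(s)^* g}_{L^r_x} \cleq N^{2d(1/2-1/r)} e^{-(t+s)/2}(1+N|t-s|)^{-(d-1)(1/2-1/r)}\norm{g}_{L^{r'}_x}.
\end{equation*}

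Since $e^{-(t+s)/2}\le e^{-|t-s|/2}$ whenever $t,s\ge 0$, Young's convolution inequality gives
\begin{equation*}
\norm{V_N V_N^* F}_{L^q_t L^r_x} \cleq N^{2d(1/2-1/r)}\,\norm{K_N}_{L^{q/2}_t}\,\norm{F}_{L^{q'}_t L^{r'}_x},
\end{equation*}
where $K_N(\tau):=e^{-|\tau|/2}(1+N|\tau|)^{-(d-1)(1/2-1/r)}$. Rescaling $\sigma=N\tau$ shows that $\norm{K_N}_{L^{q/2}_t}\cleq N^{-\min\{2/q,\ (d-1)(1/2-1/r)\}}$, with the two regimes of the minimum corresponding exactly to the wave-admissibility or non-wave-admissibility of $(q,r)$. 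The $TT^*$ principle then yields, dyadic-frequency by dyadic-frequency,
\begin{equation*}
\norm{V_N f}_{L^q_t L^r_x} \cleq N^{d(1/2-1/r)-\tfrac12\min\{2/q,\ (d-1)(1/2-1/r)\}}\norm{P_N f}_{L^2},
\end{equation*}
whose exponent collapses to $d(1/2-1/r)-1/q$ in the wave-admissible case and to $(d+1)(1/2-1/r)/2$ in the non-wave-admissible case. A Littlewood--Paley square-function estimate (valid for $q,r\ge 2$) combined with Minkowski's inequality sums these into
\begin{equation*}
\norm{e^{-t/2}e^{\pm it\sqrt{-\Delta-1/4}} P_{>1} f}_{L^q_t L^r_x} \cleq \norm{|\nabla|^\gamma f}_{L^2}
\end{equation*}
with $\gamma=\max\{d(1/2-1/r)-1/q,\ (d+1)(1/2-1/r)/2\}$, as claimed. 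The bounds for $\partial_t \cD_h$ and $\partial_t^2\cD_h$ follow from the same analysis, because each time derivative pulls down a factor $\sqrt{|\xi|^2-1/4}\sim|\xi|$ on $\{|\xi|>1\}$, raising the derivative count by one (respectively two).

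The main obstacle is the borderline case $(d-1)(1/2-1/r)=2/q$ (wave-admissible endpoint, in particular the pair $(2,2(d-1)/(d-3))$ when $d\ge 4$), where the change-of-variables calculation for $\norm{K_N}_{L^{q/2}_t}$ produces a logarithmic loss that prevents a clean dyadic summation. Removing this loss requires the Keel--Tao bilinear interpolation scheme at the endpoint; once that is in place, the dyadic summation proceeds as above.
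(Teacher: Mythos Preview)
Your argument is correct, but the paper takes a different and shorter route. Instead of a frequency-localized $TT^*$ argument, the paper factors
\[
e^{it\sqrt{-\Delta-1/4}} = e^{it|\nabla|}\cdot e^{it(\sqrt{-\Delta-1/4}-|\nabla|)}
\]
and observes that on $\{|\xi|\gtrsim 1\}$ the second factor is a Mihlin--H\"ormander multiplier whose $L^r\to L^r$ norm grows only polynomially in $t$. That polynomial loss is absorbed by the damping $e^{-t/2}$ via H\"older's inequality in time, at the price of replacing $q$ by a wave-admissible exponent $\tilde q\ge q$ (taking $\tilde q=q$ when $(q,r)$ is already wave-admissible, and $1/\tilde q=\tfrac{d-1}{2}(1/2-1/r)$ otherwise). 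The estimate then follows by quoting the homogeneous Strichartz inequality for $e^{it|\nabla|}$ as a black box. This sidesteps both the dyadic summation and your borderline obstruction: the wave endpoint $(2,2(d-1)/(d-3))$ is already covered by the cited wave Strichartz (via Keel--Tao), so no separate bilinear interpolation is needed here. Your $TT^*$ scheme is nonetheless essentially the method the paper adopts later for the \emph{inhomogeneous} high-frequency estimates (Lemmas~2.5--2.6), where a direct reduction to $e^{it|\nabla|}$ is less convenient; so your approach is sound, just heavier than necessary for the homogeneous case.
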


\begin{proof}
First, we consider $e^{it\sqrt{-\Delta-1/4}}$.
We note that
\begin{align*}
	e^{it \sqrt{-\Delta-1/4}} = e^{it|\nabla|} e^{it \left( \sqrt{-\Delta-1/4}-|\nabla| \right)}.
\end{align*}
Since we have
\begin{align*}
	\l| \sqrt{|\xi|^2-1/4}-|\xi| \r| 
	= \frac{1}{4(\sqrt{|\xi|^2-1/4}+|\xi|)}
	\ceq |\xi|^{-1},
\end{align*}
a simple calculation shows
\begin{align*}
	\l| \partial_{\xi}^{\alpha} e^{it \l( \sqrt{|\xi|^2-1/4}-|\xi| \r)} \r|
	\cleq  \jbra{t}^{|\alpha|} |\xi|^{-|\alpha|}
\end{align*}
for $\xi \neq 0$ and $\alpha \in \mathbb{Z}_{\ge 0}^n$. 
Thus, the Mihlin--H\"{o}rmander multiplier theorem (see \cite[Theorem 6.2.7]{Gra14}) gives
\begin{align*}
	\norm{ e^{ it\sqrt{-\Delta-1/4}} P_{>1} f}_{L^r}
	\cleq  \jbra{t}^{\delta_r} \norm{ e^{ it |\nabla|} f }_{L^r}
\end{align*}
for some $\delta_r>0$. Therefore, we obtain
\begin{align*}
	\norm{e^{-t/2} e^{ it\sqrt{-\Delta-1/4}} P_{>1} f}_{L^{q}(I:L^r(\R^d))}
	&= \norm{e^{-t/2} \norm{ e^{ it\sqrt{-\Delta-1/4}} P_{>1} f }_{L^r}  }_{L^{q}(I)}
	\\
	&\cleq \norm{e^{-t/2} \jbra{t}^{\delta_r} \norm{ e^{ it |\nabla|} f }_{L^r}  }_{L^{q}(I)}
	\\
	&\cleq \norm{ e^{ it |\nabla|} f  }_{L^{\tilde{q}}(I:L^r(\R^d))},
\end{align*}
where we have used the H\"{o}lder inequality in the last inequality and we take $\tilde{q}$ such that
\begin{align*}
	\tilde{q}
	=
	\l\{
	\begin{array}{ll}
	q 
	& \text{ if } \frac{d-1}{2}\l( \frac{1}{2} - \frac{1}{r}\r) \geq \frac{1}{q}, 
	\\
	 \l\{ \frac{d-1}{2} \l( \frac{1}{2} - \frac{1}{r} \r)  \r\}^{-1}  
	& \text{ if } \frac{d-1}{2}\l( \frac{1}{2} - \frac{1}{r}\r) < \frac{1}{q}.
	\end{array}
	\r.
\end{align*}
Then, $(\tilde{q},r)$ is a wave admissible pair. Namely, it satisfies
\begin{align*}
	\frac{1}{\tilde{q}} + \frac{d-1}{2r} \leq \frac{d-1}{4}, 
	\ \tilde{q},r,d \geq 2,
	\text{ and } (q,r,d) \neq (2,\infty,3)
\end{align*}
and
\begin{align*}
	\frac{1}{\tilde{q}}+\frac{d}{r}=\frac{d}{2} - \gamma,
\end{align*}
where we note that $\gamma \geq 0$. 
Therefore, by the Strichartz estimate for the free wave equation (see \cite{GiVe95} or \cite[Corollary 2.5 in p.233]{KTV14}), we get 
\begin{align*}
	\norm{e^{-t/2} e^{ it\sqrt{-\Delta-1/4}} P_{>1} f}_{L^{q}(I:L^r(\R^d))}
	&\cleq \norm{ e^{ it |\nabla|} f  }_{L^{\tilde{q}}(I:L^r(\R^d))}
	\\
	& \cleq \norm{|\nabla|^{\gamma} f}_{L^2}.
\end{align*}
Similarly, we also have
\begin{align*}
	\norm{e^{-t/2} e^{- it\sqrt{-\Delta-1/4}} P_{>1} f}_{L^{q}(I:L^r(\R^d))}
	 \cleq \norm{|\nabla|^{\gamma} f}_{L^2}. 
\end{align*}
Combining them with the formula of $\cD_{h}$, we obtain
\begin{align*}
	\norm{\cD_{h}(t) f}_{L^{q}(I:L^r(\R^d))}
	\cleq \norm{|\nabla|^{\gamma}\jbra{\nabla}^{-1} f}_{L^2},
\end{align*}
where we use $\sqrt{|\xi|^2-1/4} \ceq \jbra{\xi}$ for $|\xi|\geq1$. 
Moreover, we also get the estimates related to $\partial_t \cD_{h}(t)$ and $\partial_t^2 \cD_{h}(t)$. 
\end{proof}

\begin{remark}
We can also obtain the homogeneous Strichartz estimates for high frequency part when $1\leq q < 2$. Indeed, taking
\begin{align*}
	\tilde{q}
	=
	\l\{
	\begin{array}{ll}
	2 
	& \text{ if } \frac{d-1}{2}\l( \frac{1}{2} - \frac{1}{r}\r) \geq \frac{1}{2},
	\\
	&
	\\
	\l\{ \frac{d-1}{2} \l( \frac{1}{2} - \frac{1}{r} \r)  \r\}^{-1}  
	& \text{ if } \frac{d-1}{2}\l( \frac{1}{2} - \frac{1}{r}\r) < \frac{1}{2},
	\end{array}
	\r.
\end{align*}
 $(\tilde{q},r)$ is a wave admissible pair and thus the above argument does work. We note that, in this case, we need to redefine $\gamma$ such that 
 \begin{align*}
 	\gamma:= \max\l\{ \frac{d+1}{2}\l(\frac{1}{2}-\frac{1}{r}\r), d\l(\frac{1}{2}-\frac{1}{r}\r)-\frac{1}{2}\r\} \geq 0.
 \end{align*}
\end{remark}


To prove inhomogeneous Strichartz estimates for high frequency part, we show the $L^p$-$L^{q}$ type estimate.

\begin{lemma}[$L^{r}$-$L^{r'}$ estimate for high frequency part]
\label{lem2.5}
Let $d \geq 1$. Let $2 \leq r < \infty$. Then, it holds that 
\begin{align*}
	\norm{e^{\pm it \sqrt{-\Delta-1/4} }P_{>1} P_{N} f}_{L^{r}} 
	\cleq  \jbra{t}^{\delta_r}  (1+|t|N)^{-\frac{d-1}{2} \l( 1-\frac{2}{r}\r) } N^{d\l( 1 -\frac{2}{r} \r)} \norm{P_{N} f}_{L^{r'}}
\end{align*}
for any $t>0$ and $N \in 2^{\Z}$, where $\delta_r$ is a positive constant. 
\end{lemma}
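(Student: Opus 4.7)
The plan is to reduce this estimate to the classical dispersive $L^{r'}$-$L^r$ estimate for the free half-wave propagator $e^{\pm it|\nabla|}$ by factoring out the low-order correction in the phase, exactly in the spirit of the proof of Lemma \ref{lem2.4}. Concretely, I would write
\begin{align*}
	e^{\pm it\sqrt{-\Delta-1/4}} P_{>1} P_N
	= M_{\pm}(t) \, e^{\pm it|\nabla|} P_N,
	\qquad
	M_{\pm}(t):= e^{\pm it(\sqrt{-\Delta-1/4}-|\nabla|)} P_{>1},
\end{align*}
where all three operators are Fourier multipliers in $\xi$ and hence commute.

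Step 1: \emph{Control of the phase-difference multiplier.} The symbol of $M_{\pm}(t)$ is
\[
	m_{t}(\xi) := e^{\pm it(\sqrt{|\xi|^2-1/4}-|\xi|)} \chi_{>1}(\xi),
\]
and the elementary bound $|\sqrt{|\xi|^2-1/4}-|\xi|| \approx |\xi|^{-1}$ for $|\xi|\gtrsim 1$, already used in the proof of Lemma \ref{lem2.4}, yields
\[
	|\partial_\xi^\alpha m_t(\xi)| \cleq \jbra{t}^{|\alpha|} |\xi|^{-|\alpha|}
\]
for every multi-index $\alpha$. The Mihlin--H\"ormander multiplier theorem then gives an $L^r$--$L^r$ bound on $M_{\pm}(t)$ with operator norm $\cleq \jbra{t}^{\delta_r}$ for some $\delta_r>0$ depending only on $r$ and $d$.

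Step 2: \emph{Littlewood--Paley dispersive estimate for the free wave.} The classical half-wave dispersive estimate combined with the Bernstein-type localisation by $P_N$ gives
\[
	\norm{e^{\pm it|\nabla|} P_N f}_{L^\infty}
	\cleq (1+|t|N)^{-\frac{d-1}{2}} N^{d} \norm{P_N f}_{L^1},
\]
while the $L^2$--$L^2$ isometry is trivial. Riesz--Thorin interpolation between these two endpoints at the dual exponents $(r',r)$ produces
\[
	\norm{e^{\pm it|\nabla|} P_N f}_{L^r}
	\cleq (1+|t|N)^{-\frac{d-1}{2}(1-\frac{2}{r})} N^{d(1-\frac{2}{r})} \norm{P_N f}_{L^{r'}},
\]
for all $2\le r<\infty$.

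Step 3: \emph{Composition.} Applying the $L^r$ boundedness of $M_{\pm}(t)$ from Step 1 to the function $e^{\pm it|\nabla|} P_N f$ and then inserting Step 2 gives
\begin{align*}
	\norm{e^{\pm it\sqrt{-\Delta-1/4}} P_{>1} P_N f}_{L^r}
	&= \norm{M_{\pm}(t) e^{\pm it|\nabla|} P_N f}_{L^r}
	\cleq \jbra{t}^{\delta_r} \norm{e^{\pm it|\nabla|} P_N f}_{L^r}
	\\
	&\cleq \jbra{t}^{\delta_r} (1+|t|N)^{-\frac{d-1}{2}(1-\frac{2}{r})} N^{d(1-\frac{2}{r})} \norm{P_N f}_{L^{r'}},
\end{align*}
which is exactly the claim (note that $P_{>1} P_N = P_N$ when $N\gtrsim 1$ and vanishes otherwise, so the estimate is trivial for the remaining scales).

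The only step that is genuinely technical is the Mihlin verification in Step 1, but this is essentially a repackaging of the derivative bound already exploited in Lemma \ref{lem2.4}; the dispersive-plus-interpolation input of Step 2 is completely standard. The main conceptual point is simply that the damped-wave phase differs from the free-wave phase by a symbol that is $C^\infty$ and decays in $\xi$, so it passes through $L^r$ with only a polynomial-in-$t$ loss, and all the actual dispersion then comes from the free wave factor.
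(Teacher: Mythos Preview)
Your proof is correct and follows the same two-ingredient strategy the paper sketches: reduce to the free half-wave dispersive estimate via the Mihlin--H\"ormander multiplier theorem applied to the phase correction $e^{\pm it(\sqrt{|\xi|^2-1/4}-|\xi|)}\chi_{>1}(\xi)$, exactly as in Lemma \ref{lem2.4}. The paper's one-line proof invokes precisely these two inputs (the $L^p$-$L^q$ estimate for the free wave from \cite{Bre75,KTV14} and Mihlin--H\"ormander), and your write-up simply fills in the details.
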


\begin{proof}
Combining the $L^{p}$-$L^{q}$ type estimate for free wave equation (see \cite{Bre75} or \cite[Lemma 2.1 in p.230]{KTV14}) and the Mihlin--H\"{o}rmander multiplier theorem, we get the statement. 
\end{proof}


\begin{lemma}[Inhomogeneous Strichartz estimate for high frequency part]
\label{lem2.6}
Let $d \geq 2$. Let $2 \leq r < \infty$ and $2 \leq q \leq \infty$. We set $\gamma:= \max \{ d(1/2-1/r)-1/q, \frac{d+1}{2}(1/2-1/r) \}$. We exclude the end-point case, that is, we assume that $(q,r) \neq (2, 2(d-1)/(d-3))$ when $d \geq 4$. Then, we have 
\begin{align*}
	\norm{\int_{0}^{t} e^{-\frac{t-s}{2}} e^{\pm i(t-s)\sqrt{-\Delta-1/4}} P_{>1} P_{N} F(s) ds}_{L^{q}(I:L^r(\R^d))}
	\cleq N^{2\gamma} \norm{P_{N}F}_{L^{q'}(I:L^{r'}(\R^d))},
\end{align*}
where $I \subset [0,\infty)$ is a time interval such that $0 \in \overline{I}$ and the implicit constant is independent of $I$. 
\end{lemma}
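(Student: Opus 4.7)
The plan is to reduce the inhomogeneous estimate to a scalar convolution estimate in time, combining the $L^{r'}$-$L^r$ dispersive bound of Lemma \ref{lem2.5} with the exponential factor $e^{-(t-s)/2}$. Applying Minkowski's inequality followed by Lemma \ref{lem2.5}, the left-hand side is bounded by
\begin{align*}
	\norm{ \int_0^t e^{-(t-s)/2} \jbra{t-s}^{\delta_r} (1+(t-s) N)^{-\beta} N^{d(1-2/r)} \norm{P_N F(s)}_{L^{r'}} \, ds }_{L^q_t(I)},
\end{align*}
where $\beta := \frac{d-1}{2}(1-\frac{2}{r})$. Since $\jbra{t-s}^{\delta_r}$ is absorbed by $e^{-(t-s)/4}$, setting $K(t) := e^{-t/4}(1+tN)^{-\beta} \1_{\{t \geq 0\}}$ and $G(s) := \norm{P_N F(s)}_{L^{r'}} \1_I(s)$, the problem reduces to showing
\begin{align*}
	\norm{K * G}_{L^q(\R)} \lesssim N^{2\gamma - d(1-2/r)} \norm{G}_{L^{q'}(\R)}.
\end{align*}

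By Young's convolution inequality with exponents $1 + \frac{1}{q} = \frac{2}{q} + \frac{1}{q'}$, it suffices to prove $\norm{K}_{L^{q/2}(\R)} \lesssim N^{2\gamma - d(1-2/r)}$, with the convention $q/2 = \infty$ when $q = \infty$. Splitting the integral $\int_0^\infty K(t)^{q/2} \, dt$ at $t = 1/N$ and using the exponential decay to absorb the contribution of $t \gtrsim 1$ gives
\begin{align*}
	\norm{K}_{L^{q/2}}^{q/2} \lesssim \frac{1}{N} + N^{-q\beta/2} \int_{1/N}^{\infty} e^{-qt/8} \, t^{-q\beta/2} \, dt.
\end{align*}
When $\frac{q\beta}{2} < 1$, i.e., $\frac{d-1}{2}(\frac{1}{2} - \frac{1}{r}) < \frac{1}{q}$, the remaining time integral is $O(1)$, yielding $\norm{K}_{L^{q/2}} \lesssim N^{-(d-1)(1/2-1/r)}$; when $\frac{q\beta}{2} > 1$, the remaining integral behaves like $N^{q\beta/2 - 1}$, yielding $\norm{K}_{L^{q/2}} \lesssim N^{-2/q}$. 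A direct comparison with the definition of $\gamma$ shows that the first subcase corresponds to $\gamma = \frac{d+1}{2}(\frac{1}{2}-\frac{1}{r})$ and gives exactly $N^{2\gamma - d(1-2/r)}$, while the second subcase corresponds to $\gamma = d(\frac{1}{2}-\frac{1}{r}) - \frac{1}{q}$ and also matches $N^{2\gamma - d(1-2/r)}$, as required.

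The main obstacle is the borderline case $\frac{q\beta}{2} = 1$, which produces a logarithmic loss in the integral above and corresponds precisely to the wave Strichartz end-point $(q,r) = (2, \frac{2(d-1)}{d-3})$ in dimensions $d \geq 4$; this is exactly the pair excluded by hypothesis. The extreme case $q = \infty$ is handled directly via $\norm{K}_{L^\infty} \leq 1$, which matches $N^{2\gamma - d(1-2/r)} = N^0$; in $d = 2, 3$ the forbidden wave end-point does not lie in the allowed range $r < \infty$, so no exclusion is required there. Combining all of these ingredients completes the proof.
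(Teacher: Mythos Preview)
Your reduction to a scalar convolution via Lemma \ref{lem2.5} and Young's inequality matches the paper's argument in the two non-borderline regimes $\tfrac{q\beta}{2}\gtrless 1$. However, your identification of the borderline case $\tfrac{q\beta}{2}=1$ with the single excluded pair $(q,r)=(2,\tfrac{2(d-1)}{d-3})$ is incorrect, and this is a genuine gap. The condition $\tfrac{q\beta}{2}=1$, i.e.\ $\tfrac{d-1}{2}(\tfrac12-\tfrac1r)=\tfrac1q$, describes an entire line of admissible pairs (the wave scaling line), not a single point. For instance, in $d=3$ every pair with $\tfrac1q=\tfrac12-\tfrac1r$ and $2<q<\infty$ lies on this line and is \emph{not} excluded by hypothesis; similarly in $d=2$ one hits this line for all $q>4$. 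For all such pairs your $L^{q/2}$ bound on $K$ carries a logarithmic loss, so Young's inequality alone does not close the estimate.

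The paper fills this gap by bounding $(1+|t-s|N)^{-\beta}\le N^{-\beta}|t-s|^{-\beta}$ on the borderline and invoking the Hardy--Littlewood--Sobolev inequality in the time variable, which yields
\[
\norm{\int_0^t |t-s|^{-\beta}\,\|P_N F(s)\|_{L^{r'}}\,ds}_{L^q(I)}
\lesssim \|P_N F\|_{L^{q'}(I:L^{r'})}
\]
precisely when $1<q'<q<\infty$, i.e.\ $q>2$. The only borderline pair for which HLS fails is $q=2$, and \emph{that} is the excluded endpoint $(q,r)=(2,\tfrac{2(d-1)}{d-3})$ (which occurs only for $d\ge 4$). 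Adding this HLS step on the line $\tfrac{d-1}{2}(\tfrac12-\tfrac1r)=\tfrac1q$ repairs your argument.
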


\begin{proof}
By the $L^{r}$-$L^{r'}$ estimate for high frequency part, Lemma \ref{lem2.5}, we get
\begin{align}
\label{eq2.1}
	&\norm{\int_{0}^{t} e^{-\frac{t-s}{2}} e^{\pm i(t-s)\sqrt{-\Delta-1/4}} P_{>1} P_{N} F(s) ds}_{L^{q}(I:L^r(\R^d))}
	\\ \notag
	&\leq \norm{\int_{0}^{t} e^{-\frac{t-s}{2}}  \jbra{t-s}^{\delta_r}  (1+|t-s|N)^{-\frac{d-1}{2} \l( 1-\frac{2}{r}\r) } N^{d\l( 1 -\frac{2}{r} \r)} \norm{P_{N} F(s)}_{L^{r'}}  ds}_{L^{q}(I)}
	\\ \notag
	&\cleq 
	N^{d\l( 1 -\frac{2}{r} \r)} 
	\norm{\int_{0}^{t} e^{-\frac{t-s}{4}} (1+|t-s|N)^{-\frac{d-1}{2} \l( 1-\frac{2}{r}\r) }  \norm{P_{N} F(s)}_{L^{r'}}  ds}_{L^{q}(I)}.
\end{align}
Here, by the Young inequality, we obtain
\begin{align}
\label{eq2.2}
	&N^{d\l( 1 -\frac{2}{r} \r)} 
	\norm{\int_{0}^{t} e^{-\frac{t-s}{4}} (1+|t-s|N)^{-\frac{d-1}{2} \l( 1-\frac{2}{r}\r) }  \norm{P_{N} F(s)}_{L^{r'}}  ds}_{L^{q}(I)}
	\\ \notag
	& \cleq N^{d\l( 1 -\frac{2}{r} \r)} 
	\norm{ e^{-\frac{\cdot}{4}} (1+|\cdot|N)^{-\frac{d-1}{2} \l( 1-\frac{2}{r}\r) } }_{L^{q/2}([0,\infty))}
	\norm{P_N F}_{L^{q'}(I: L^{r'}(\R^d))}.
\end{align}
In the case of $\frac{d-1}{2} (1-2/r)> 2/q$, since we have
\begin{align*}
	\norm{ e^{-\frac{\cdot}{4}} (1+|\cdot|N)^{-\frac{d-1}{2} \l( 1-\frac{2}{r}\r) } }_{L^{q/2}([0,\infty))}^{q/2}
	\leq \int_{0}^{\infty} (1+|t|N)^{-\frac{d-1}{2} \l( 1-\frac{2}{r}\r) \frac{q}{2} } dt
	\cleq N^{-1},
\end{align*}
we obtain, from \eqref{eq2.1} and \eqref{eq2.2}, 
\begin{align*}
	\text{(L.H.S. of \eqref{eq2.1})}
	& \cleq N^{2 \l\{ d\l( \frac{1}{2} - \frac{1}{r} \r) - \frac{1}{q} \r\}} \norm{P_N F}_{L^{q'}(I: L^{r'}(\R^d))}
	\\
	& = N^{2\gamma} \norm{P_N F}_{L^{q'}(I: L^{r'}(\R^d))}. 
\end{align*}
On the other hand, in the case of $\frac{d-1}{2} (1-2/r) <  2/q$, we have
\begin{align*}
	&\norm{ e^{-\frac{\cdot}{4}} (1+|\cdot|N)^{-\frac{d-1}{2} \l( 1-\frac{2}{r}\r) } }_{L^{q/2}([0,\infty))}^{q/2}
	\\
	&= \int_{0}^{\infty} e^{-\frac{q}{8}t} (1+|t|N)^{-\frac{d-1}{2} \l( 1-\frac{2}{r}\r) \frac{q}{2} } dt
	\\
	&\leq N^{-\frac{d-1}{2} \l( 1-\frac{2}{r}\r) \frac{q}{2}} \int_{0}^{\infty} e^{-\frac{q}{8}t} t^{-\frac{d-1}{2} \l( 1-\frac{2}{r}\r) \frac{q}{2} } dt
	\\
	&\leq N^{-\frac{d-1}{2} \l( 1-\frac{2}{r}\r) \frac{q}{2}}
	 \l( \int_{0}^{1} t^{-\frac{d-1}{2} \l( 1-\frac{2}{r}\r) \frac{q}{2} } dt
	+ \int_{1}^{\infty} e^{-\frac{q}{8}t}dt \r)
	\\
	& \cleq N^{-\frac{d-1}{2} \l( 1-\frac{2}{r}\r) \frac{q}{2}}.
\end{align*}
Therefore, we obtain, from \eqref{eq2.1} and \eqref{eq2.2}, 
\begin{align*}
	\text{(L.H.S. of \eqref{eq2.1})}
	& \cleq N^{2 \l\{  \frac{d+1}{2} \l( \frac{1}{2}-\frac{1}{r}\r) \r\}} \norm{P_N F}_{L^{q'}(I: L^{r'}(\R^d))}
	\\
	& = N^{2\gamma} \norm{P_N F}_{L^{q'}(I: L^{r'}(\R^d))}. 
\end{align*}
At last, we consider the case of $\frac{d-1}{2} (1-2/r) =  2/q$. Then, we have
\begin{align}
\label{eq2.3}
	&N^{d\l( 1 -\frac{2}{r} \r)} 
	\norm{\int_{0}^{t} e^{-\frac{t-s}{4}} (1+|t-s|N)^{-\frac{d-1}{2} \l( 1-\frac{2}{r}\r) }  \norm{P_{N} F(s)}_{L^{r'}}  ds}_{L^{q}(I)}
	\\ \notag
	& \cleq N^{d\l( 1 -\frac{2}{r} \r)}  N^{-\frac{d-1}{2} \l( 1-\frac{2}{r}\r) } 
	\norm{\int_{0}^{t} |t-s|^{-\frac{d-1}{2} \l( 1-\frac{2}{r}\r) }  \norm{P_{N} F(s)}_{L^{r'}}  ds}_{L^{q}(I)}
	\\ \notag
	& = N^{2\gamma} 
	\norm{\int_{0}^{t} |t-s|^{-\frac{d-1}{2} \l( 1-\frac{2}{r}\r) }  \norm{P_{N} F(s)}_{L^{r'}}  ds}_{L^{q}(I)}
\end{align}
and it follows from the Hardy--Littlewood--Sobolev inequality that 
\begin{align}
\label{eq2.4}
	\norm{\int_{0}^{t} |t-s|^{-\frac{d-1}{2} \l( 1-\frac{2}{r}\r) }  \norm{P_{N} F(s)}_{L^{r'}}  ds}_{L^{q}(I)}
	\cleq  \norm{P_{N} F(s)}_{L^{q'}(I:L^{r'}(\R^d))},
\end{align}
since $(q,r)$ is not the end-point. 
Combining \eqref{eq2.1}, \eqref{eq2.3}, and \eqref{eq2.4}, we get the desired inequality. 
\end{proof}

\begin{remark}
In the previous lemma, we exclude the end-point case. However, we can easily obtain the Strichartz estimate in the end-point case if we permit additional derivative loss. Indeed, using \eqref{eq2.1} and the following calculation:
\begin{align*}
	&N^{d\l( 1 -\frac{2}{r} \r)} 
	\norm{\int_{0}^{t} e^{-\frac{t-s}{4}} (1+|t-s|N)^{-\frac{d-1}{2} \l( 1-\frac{2}{r}\r) }  \norm{P_{N} F(s)}_{L^{r'}}  ds}_{L^{q}(I)}
	\\
	& \cleq N^{d\l( 1 -\frac{2}{r} \r)}  
	\norm{\int_{0}^{t} e^{-\frac{t-s}{4}} \norm{P_{N} F(s)}_{L^{r'}}  ds}_{L^{q}(I)}
	\\
	& \cleq N^{2d\l( \frac{1}{2} -\frac{1}{r} \r)} 
	\norm{ e^{-\frac{\cdot}{4}} }_{L^{q/2}([0,\infty))}
	\norm{P_N F}_{L^{q'}(I: L^{r'}(\R^d))}
	\\
	& \cleq N^{2d\l( \frac{1}{2} -\frac{1}{r} \r)} 
	\norm{P_N F}_{L^{q'}(I: L^{r'}(\R^d))},
\end{align*}
where we have used the Young inequality, we get the Strichartz estimate with the derivative loss $d(1/2-1/r)$ instead of $\gamma$. We note that the derivative loss $d(1/2 - 1/r)$ is larger than $\gamma$ in Lemma \ref{lem2.6}. 
\end{remark}

\begin{lemma}[$L_{t}^{\infty}L_{x}^2$-$L_{t}^{q'}L_{x}^{r'}$ estimate for high frequency part]
\label{lem2.7}
Let $d \geq2$. 
Let $2 \leq r < \infty$ and $2 \leq q \leq \infty$. We set $\gamma:= \max \{ d(1/2-1/r)-1/q, \frac{d+1}{2}(1/2-1/r) \}$. We assume that $(q,r) \neq (2, 2(d-1)/(d-3))$ when $d \geq 4$. Then, we have 
\begin{align*}
	\norm{\int_{0}^{t} e^{-\frac{t-s}{2}} e^{\pm i(t-s)\sqrt{-\Delta-1/4}} P_{>1} P_{N} F(s) ds}_{L^{\infty}(I:L^{2}(\R^d))}
	\cleq N^{\gamma} \norm{P_{N}F}_{L^{q'}(I:L^{r'}(\R^d))},
\end{align*}
where $I \subset [0,\infty)$ is a time interval such that $0 \in \overline{I}$ and the implicit constant is independent of $I$. 
\end{lemma}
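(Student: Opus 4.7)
The plan is to prove Lemma \ref{lem2.7} by a standard $TT^*$/duality argument from the homogeneous Strichartz estimate (Lemma \ref{lem2.4}) for the high frequency part. The estimate we want is the dual form of the homogeneous estimate localized at frequency $N$, which should yield the loss $N^\gamma$ (rather than the $N^{2\gamma}$ loss in Lemma \ref{lem2.6}) because only one unitary/Strichartz factor is applied.

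First, I fix $t \in I$, denote the Duhamel integral by $v(t)$, and write $\|v(t)\|_{L^2_x}$ by duality as $\sup_{\|g\|_{L^2}\leq 1} |\langle v(t),g\rangle|$. Using self-adjointness of $P_{>1}$ and $P_N$, and noting that the adjoint of $e^{-(t-s)/2}e^{\pm i(t-s)\sqrt{-\Delta-1/4}}$ (in the spatial $L^2$ pairing, holding $s,t$ fixed) is $e^{-(t-s)/2}e^{\mp i(t-s)\sqrt{-\Delta-1/4}}$, I move everything onto $g$:
\begin{align*}
\langle v(t), g\rangle
= \int_{0}^{t} \tbra{P_{N} F(s)}{\, e^{-\frac{t-s}{2}} e^{\mp i(t-s)\sqrt{-\Delta-1/4}} P_{>1} P_{N} g } ds.
\end{align*}
Applying Hölder in $s$ with exponents $q'$ and $q$ yields
\begin{align*}
|\langle v(t),g\rangle|
\leq \norm{P_N F}_{L^{q'}(I:L^{r'})} \cdot
\norm{ e^{-\frac{t-s}{2}} e^{\mp i(t-s)\sqrt{-\Delta-1/4}} P_{>1} P_{N} g}_{L^{q}_s([0,t]:L^r)}.
\end{align*}

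Next, I perform the change of variables $\tau=t-s$ and extend the resulting $s$-interval $[0,t]$ to $[0,\infty)$ to get
\begin{align*}
\norm{ e^{-\frac{t-s}{2}} e^{\mp i(t-s)\sqrt{-\Delta-1/4}} P_{>1} P_{N} g}_{L^{q}_s([0,t]:L^r)}
\leq \norm{ e^{-\tau/2} e^{\mp i\tau\sqrt{-\Delta-1/4}} P_{>1} P_{N} g}_{L^{q}_\tau([0,\infty):L^r)}.
\end{align*}
Applying Lemma \ref{lem2.4} (noting the hypothesis $(q,r)\neq(2,2(d-1)/(d-3))$ for $d\geq 4$ is exactly the assumption used there as well) together with the standard Littlewood-Paley fact $\| |\nabla|^\gamma P_N g\|_{L^2} \lesssim N^\gamma \|P_N g\|_{L^2} \leq N^\gamma \|g\|_{L^2}$, this is bounded by $N^\gamma \|g\|_{L^2} \leq N^\gamma$. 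Combining the two steps gives $\|v(t)\|_{L^2} \lesssim N^\gamma \|P_N F\|_{L^{q'}(I:L^{r'})}$, and taking the supremum in $t \in I$ completes the proof.

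I do not anticipate any serious obstacle: the key point is only that the damping factor $e^{-(t-s)/2}$ is bounded by $1$ for $s\leq t$ (so it can be absorbed into the Strichartz estimate of Lemma \ref{lem2.4}, where the exponential decay is actually used more generously), and that the $L^\infty_t L^2_x$ norm is obtained pointwise-in-$t$ by duality so no Christ--Kiselev lemma is needed. The only minor care is verifying that the adjoint of $e^{-\tau/2}e^{\pm i\tau\sqrt{-\Delta-1/4}}P_{>1}$ is indeed $e^{-\tau/2}e^{\mp i\tau\sqrt{-\Delta-1/4}}P_{>1}$, which is immediate on the Fourier side since the multiplier is a product of real and pure imaginary symbols.
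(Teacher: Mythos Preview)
Your argument is correct, but it takes a different route from the paper's proof. The paper expands $\norm{v(t)}_{L^2}^2$ as a double integral in $s,\tau$, uses the factorization $e^{-(t-s)/2}e^{-(t-\tau)/2}=e^{-(t-s)}e^{-(s-\tau)/2}$ to recombine the propagators, and then appeals to the inhomogeneous $L_t^qL_x^r$--$L_t^{q'}L_x^{r'}$ estimate (Lemma~\ref{lem2.6}) to obtain the bound $N^{2\gamma}\norm{P_N F}_{L^{q'}L^{r'}}^2$, whence the result by taking a square root. Your approach instead pairs against a single $g\in L^2$, moves the propagator onto $g$, and reduces directly to the \emph{homogeneous} estimate (Lemma~\ref{lem2.4}) applied to $P_N g$.

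Your route is somewhat more elementary: it avoids Lemma~\ref{lem2.6} and the exponential factorization trick entirely. One small inaccuracy: you write that the endpoint exclusion $(q,r)\neq(2,2(d-1)/(d-3))$ is ``exactly the assumption used'' in Lemma~\ref{lem2.4}, but in fact Lemma~\ref{lem2.4} carries no such restriction (its proof goes through the homogeneous wave Strichartz estimate, which is valid at the endpoint). Thus your argument actually proves Lemma~\ref{lem2.7} \emph{without} the endpoint exclusion; the paper inherits that restriction only because it passes through Lemma~\ref{lem2.6}. Conversely, the paper's $TT^*$ structure is what sets up the later variant estimates in Remark~\ref{rmk2.3} (with the extra factor $e^{-(\tau-s)}$), which feed into Lemma~\ref{lem2.8}; your direct duality does not immediately yield those.
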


\begin{proof}
Now, we have
\begin{align*}
	&\norm{\int_{0}^{t} e^{-\frac{t-s}{2}} e^{\pm i(t-s)\sqrt{-\Delta-1/4}} P_{>1} P_{N} F(s) ds}_{L^{2}}^2
	\\
	&= \tbra{\int_{0}^{t} e^{-\frac{t-s}{2}} e^{\pm i(t-s)\sqrt{-\Delta-1/4}} P_{>1} P_{N} F(s) ds}{\int_{0}^{t} e^{-\frac{t-\tau}{2}} e^{\pm i(t-\tau)\sqrt{-\Delta-1/4}} P_{>1} P_{N} F(\tau) d\tau}_{L^2}
	\\
	&=\int_{0}^{t} \int_{0}^{s}  \tbra{e^{-\frac{t-s}{2}} e^{\pm i(t-s)\sqrt{-\Delta-1/4}} P_{>1} P_{N} F(s) }{ e^{-\frac{t-\tau}{2}} e^{\pm i(t-\tau)\sqrt{-\Delta-1/4}} P_{>1} P_{N} F(\tau)}_{L^2} d\tau ds
	\\
	&\quad +\int_{0}^{t} \int_{0}^{\tau}  \tbra{e^{-\frac{t-s}{2}} e^{\pm i(t-s)\sqrt{-\Delta-1/4}} P_{>1} P_{N} F(s) }{ e^{-\frac{t-\tau}{2}} e^{\pm i(t-\tau)\sqrt{-\Delta-1/4}} P_{>1} P_{N} F(\tau)}_{L^2} ds d\tau
	\\
	&=I+I\!\!I.
\end{align*}
By the symmetry, it is enough to estimate $I$. By the H\"{o}lder inequality, $e^{-\frac{t-s}{2}}e^{-\frac{t-\tau}{2}}=e^{-(t-s)}e^{-\frac{s-\tau}{2}}$, and $e^{-(t-s)}\leq 1$ for $s \in [0,t]$ we obtain
\begin{align*}
	I
	&=\int_{0}^{t} \tbra{e^{-\frac{t-s}{2}} e^{\pm i(t-s)\sqrt{-\Delta-1/4}} P_{>1} P_{N} F(s) }{ \int_{0}^{s}  e^{-\frac{t-\tau}{2}} e^{\pm i(t-\tau)\sqrt{-\Delta-1/4}} P_{>1} P_{N} F(\tau) d\tau}_{L^2}  ds
	\\
	&\leq  \int_{0}^{t} e^{-(t-s)} \tbra{ \l| P_{N} F(s) \r|}{ \l| \int_{0}^{s} e^{-\frac{s-\tau}{2}} e^{\pm i(s-\tau)\sqrt{-\Delta-1/4}} P_{>1}^2 P_{N} F(\tau) d\tau \r|}_{L^2} ds
	\\
	&\leq \norm{P_{N} F}_{L^{q'}(I:L^{r'}(\R^d))} 
	\norm{\int_{0}^{s} e^{-\frac{s-\tau}{2}} e^{\pm i(s-\tau)\sqrt{-\Delta-1/4}} P_{>1}^2 P_{N} F(\tau) d\tau}_{L_{s}^{q}((0,t):L^{r}(\R^d))}.
\end{align*}
By Lemma \ref{lem2.6}, we obtain 
\begin{align*}
	I \leq N^{2\gamma} \norm{ P_{N} F}_{L^{q'}(I:L^{r'}(\R^d))}^2.
\end{align*}
Thus, it follows that
\begin{align*}
	\norm{\int_{0}^{t} e^{-\frac{t-s}{2}} e^{\pm i(t-s)\sqrt{-\Delta-1/4}} P_{>1} P_{N} F(s) ds}_{L^{2}}^2
	\cleq N^{2\gamma} \norm{ P_{N} F}_{L^{q'}(I:L^{r'}(\R^d))}^2.
\end{align*}
This finishes the proof. 
\end{proof}

\begin{remark}
\label{rmk2.3}
Let $T>0$, $2 \leq r < \infty$ and $2\leq q \leq \infty$, $\gamma:= \max \{ d(1/2-1/r)-1/q, \frac{d+1}{2}(1/2-1/r) \}$, and $(q,r) \neq (2, 2(d-1)/(d-3))$ when $d \geq 4$. 
Then, we  have the following inequality by the same argument as in Lemma \ref{lem2.6}. 
\begin{align}
\label{eq2.5}
	&\norm{\int_{s}^{t} e^{-(\tau-s)} e^{-\frac{t-\tau}{2}} e^{\mp i(t-\tau)\sqrt{-\Delta-1/4}} P_{>1} P_{N} F(\tau)  d\tau}_{L_{t}^{q}((s,T):L^r(\R^d))}
	\\ \notag
	&\cleq N^{2\gamma} \norm{P_{N}F}_{L^{q'}(I:L^{r'}(\R^d))},
\end{align}
where $s<T$ is a parameter. Moreover, we also have the following estimate from \eqref{eq2.5} and the similar argument to Lemma \ref{lem2.7}.
\begin{align}
\label{eq2.6}
	&\norm{\int_{s}^{T} e^{-\frac{t-s}{2}} e^{\pm i(t-s)\sqrt{-\Delta-1/4}} P_{>1} P_{N} F(t) dt}_{L_{s}^{\infty}([0,T):L^{2}(\R^d))}
	\\ \notag
	&\cleq N^{\gamma} \norm{ P_{N} F}_{L^{q'}([0,T):L^{r'}(\R^d))}.
\end{align}

\end{remark}

\begin{lemma}[$L_{t}^{q}L_{x}^{r}$-$L_{t}^{1}L_{x}^{2}$ estimate for high frequency part]
\label{lem2.8}
Let $2 \leq r < \infty$ and $2 \leq q \leq \infty$. We set $\gamma:= \max \{ d(1/2-1/r)-1/q, \frac{d+1}{2}(1/2-1/r) \}$. We assume that $(q,r) \neq (2, 2(d-1)/(d-3))$ when $d \geq 4$. Then, we have 
\begin{align*}
	\norm{\int_{0}^{t} e^{-\frac{t-s}{2}} e^{\pm i(t-s)\sqrt{-\Delta-1/4}} P_{>1} P_{N} F(s) ds}_{L^{q}(I:L^{r}(\R^d))}
	\cleq N^{\gamma} \norm{P_{N}F}_{L^{1}(I:L^{2}(\R^d))},
\end{align*}
where $I \subset [0,\infty)$ is a time interval such that $0 \in \overline{I}$ and the implicit constant is independent of $I$. 
\end{lemma}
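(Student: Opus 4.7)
The plan is to reduce Lemma \ref{lem2.8} to the homogeneous estimate of Lemma \ref{lem2.4} by a Minkowski argument. The right-hand side has $L^1$ in time, which is precisely the norm that allows one to pull the $ds$ integration outside a mixed norm $L_t^q L_x^r$ with $q,r \geq 1$ without any loss, unlike the $L^{q'}$ cases that required the $TT^*$ and Christ--Kiselev-style arguments used in Lemmas \ref{lem2.6}--\ref{lem2.7}.

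Concretely, first I would apply Minkowski's integral inequality (using $q,r \geq 2 \geq 1$) to obtain
\begin{align*}
	\norm{\int_{0}^{t} e^{-\frac{t-s}{2}} e^{\pm i(t-s)\sqrt{-\Delta-1/4}} P_{>1} P_{N} F(s)\, ds}_{L_{t}^{q}(I;L_{x}^{r})}
	\leq \int_{I} \norm{ \1_{t>s} e^{-\frac{t-s}{2}} e^{\pm i(t-s)\sqrt{-\Delta-1/4}} P_{>1} P_{N} F(s) }_{L_{t}^{q}(I;L_{x}^{r})}\, ds.
\end{align*}
For each fixed $s \in I$, the change of variables $t \mapsto t+s$ together with the homogeneous Strichartz estimate (Lemma \ref{lem2.4}) applied to the fixed function $P_N F(s) \in L^2$ gives
\begin{align*}
	\norm{ e^{-\frac{t-s}{2}} e^{\pm i(t-s)\sqrt{-\Delta-1/4}} P_{>1} P_{N} F(s) }_{L_{t}^{q}((s,\sup I);L_{x}^{r})}
	&= \norm{ e^{-t/2} e^{\pm it\sqrt{-\Delta-1/4}} P_{>1} P_{N} F(s) }_{L_{t}^{q};L_{x}^{r}}\\
	&\cleq \norm{|\nabla|^{\gamma} P_{N} F(s)}_{L^2}.
\end{align*}
Then Bernstein's inequality for the Littlewood--Paley projector $P_N$ yields $\||\nabla|^{\gamma} P_N F(s)\|_{L^2} \cleq N^{\gamma} \|P_N F(s)\|_{L^2}$. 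Integrating over $s \in I$ produces $N^\gamma \|P_N F\|_{L^1(I;L^2)}$, which is exactly the claimed bound.

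There is essentially no obstacle: the $L^1$ norm in the endpoint $L_t^1 L_x^2$ data is perfectly matched with Minkowski, so no delicate interpolation or $TT^*$ manipulation is needed. The only tiny care is that the extra exponential factor $e^{-(t-s)/2}$, rather than hurting, was already absorbed inside Lemma \ref{lem2.4}, and the proof works uniformly in the length of $I$ because Lemma \ref{lem2.4} has an implicit constant independent of the interval. In particular, the argument is valid in all dimensions $d \geq 2$ and for every admissible pair $(q,r)$ allowed in Lemma \ref{lem2.4} (excluding the wave endpoint as in the statement).
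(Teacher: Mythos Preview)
Your proof is correct, and it is genuinely different from the paper's argument. The paper proves Lemma \ref{lem2.8} by duality: it pairs $\int_0^t e^{-(t-s)/2}e^{\pm i(t-s)\sqrt{-\Delta-1/4}}P_{>1}P_NF(s)\,ds$ against a test function $G$, swaps the $s$ and $t$ integrals by Fubini, and then applies the adjoint estimate \eqref{eq2.6} of Remark \ref{rmk2.3} (essentially Lemma \ref{lem2.7}, the $L_t^\infty L_x^2$--$L_t^{q'}L_x^{r'}$ bound, which in turn rests on the $TT^*$ estimate Lemma \ref{lem2.6}). Your route bypasses this entire duality/$TT^*$ chain: since the right-hand side carries the $L_t^1$ norm, Minkowski's integral inequality lets you pull the $ds$-integral outside $L_t^qL_x^r$ with no loss, reducing matters directly to the homogeneous estimate Lemma \ref{lem2.4} plus Bernstein. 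This is shorter and more elementary, and in fact it does not even require the exclusion of the wave endpoint $(q,r)=(2,2(d-1)/(d-3))$, since Lemma \ref{lem2.4} itself carries no such restriction. The paper's approach, on the other hand, is the systematic Strichartz machinery (prove $TT^*$, then $T^*$, then $T$ by duality), which has the advantage of fitting into the general framework used for the other inhomogeneous estimates in Lemmas \ref{lem2.9}--\ref{lem2.10}.
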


\begin{proof}
We may write $I=[0,T)$. 
We use a standard duality argument. Let $G \in C_{0}^{\infty}(I \times \R^d)$ and $\tilde{P}_{N}:= P_{N/2}+P_{N} + P_{2N}$. Since we have $\tilde{P}_{N}P_{N}=P_{N}$, it follows from the Fubini theorem and H\"{o}lder inequality that
\begin{align}
\label{eq2.7}
	&\int_{0}^{T} \tbra{\int_{0}^{t} e^{-\frac{t-s}{2}} e^{\pm i(t-s)\sqrt{-\Delta-1/4}} P_{>1} P_{N} F(s) ds}{G(t)}dt
	\\ \notag
	&=\int_{0}^{T} \int_{0}^{t} e^{-\frac{t-s}{2}} \tbra{e^{\pm i(t-s)\sqrt{-\Delta-1/4}} P_{>1} P_{N} F(s)}{G(t)} dsdt
	\\ \notag
	&=\int_{0}^{T} \int_{s}^{T} e^{-\frac{t-s}{2}} \tbra{P_{N} F(s)}{ e^{\mp i(t-s)\sqrt{-\Delta-1/4}} P_{>1} \tilde{P}_{N} G(t)} dt ds
	\\ \notag
	&=\int_{0}^{T} \tbra{P_{N} F(s)}{  \int_{s}^{T} e^{-\frac{t-s}{2}} e^{\mp i(t-s)\sqrt{-\Delta-1/4}} P_{>1} \tilde{P}_{N} G(t) dt} ds
	\\ \notag
	& \leq \norm{P_{N}F}_{L^{1}(I:L^{2}(\R^d))} \norm{ \int_{s}^{T} e^{-\frac{t-s}{2}} e^{\mp i(t-s)\sqrt{-\Delta-1/4}} P_{>1} \tilde{P}_{N} G(t) dt}_{L^{\infty}(I:L^{2}(\R^d))}
\end{align}
By \eqref{eq2.6} in Remark \ref{rmk2.3}, we get 
\begin{align}
\label{eq2.8}
	 &\norm{ \int_{s}^{T} e^{-\frac{t-s}{2}} e^{\mp i(t-s)\sqrt{-\Delta-1/4}} P_{>1} \tilde{P}_{N} G(t) dt}_{L^{\infty}(I:L^{2}(\R^d))}
	 \\ \notag
	 &\leq \sum_{j=N/2,N,2N}  \norm{ \int_{s}^{T} e^{-\frac{t-s}{2}} e^{\mp i(t-s)\sqrt{-\Delta-1/4}} P_{>1} P_{j} G(t) dt}_{L^{\infty}(I:L^{2}(\R^d))}
	 \\ \notag
	 & \cleq N^{\gamma}  \sum_{j=N/2,N,2N} \norm{P_{j}G}_{L^{q'}(I:L^{r'}(\R^d))}
	\\ \notag
	&\cleq N^{\gamma}  \norm{G}_{L^{q'}(I:L^{r'}(\R^d))}.
\end{align}
Since we have the duality
\begin{align*}
	\norm{F}_{L^{q}(I:L^{r}(\R^d))}
	=\sup \l\{ \int_{I} \tbra{F(t)}{G(t)} dt: G\in C_{0}^{\infty}(I \times \R^d), \norm{G}_{L^{q'}(I:L^{r'}(\R^d))}=1\r\},
\end{align*}
the desired estimate follows from \eqref{eq2.7} and \eqref{eq2.8}. 
\end{proof}

Combining these estimates, we obtain the following Strichartz estimates when $(1/q,1/r)$ and $(1/\tilde{q},1/\tilde{r})$ are on a same line.

\begin{lemma}
\label{lem2.9}
Let $2 \leq r, \tilde{r} < \infty$ and $2 \leq q, \tilde{q} \leq \infty$. We set $\gamma:= \max \{ d(1/2-1/r)-1/q, \frac{d+1}{2}(1/2-1/r) \}$ and $\tilde{\gamma}$ in the same manner. Assume that
\begin{align*}
	\frac{1}{\tilde{q}} \l( \frac{1}{2}- \frac{1}{r}\r) = \frac{1}{q} \l( \frac{1}{2} - \frac{1}{\tilde{r}} \r).
\end{align*} 
We also assume that $(q,r) \neq (2, 2(d-1)/(d-3))$ and $(\tilde{q},\tilde{r}) \neq (2, 2(d-1)/(d-3))$ when $d \geq 4$. 
Then, we have 
\begin{align*}
	\norm{\int_{0}^{t} e^{-\frac{t-s}{2}} e^{\pm i(t-s)\sqrt{-\Delta-1/4}} P_{>1}  F(s) ds}_{L^{q}(I:L^{r}(\R^d))}
	\cleq  \norm{|\nabla|^{\gamma+\tilde{\gamma}}F}_{L^{\tilde{q}'}(I:L^{\tilde{r}'}(\R^d))},
\end{align*}
where $I \subset [0,\infty)$ is a time interval such that $0 \in \overline{I}$ and the implicit constant is independent of $I$. 
\end{lemma}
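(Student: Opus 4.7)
The plan is to derive a per-dyadic-scale estimate by interpolating the retarded bounds of Lemmas~\ref{lem2.6} and~\ref{lem2.8}, and then sum over frequencies via the Littlewood--Paley square function. By the symmetry of the hypothesis under the exchange $(q,r)\leftrightarrow(\tilde q,\tilde r)$, I will assume without loss of generality that $q\leq \tilde q$, so that the interpolation parameter $\theta:=q/\tilde q$ lies in $(0,1]$.

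For each $N\in 2^{\Z}$, put $T_N F(t):=\int_0^t e^{-(t-s)/2} e^{\pm i(t-s)\sqrt{-\Delta-1/4}} P_{>1} P_N F(s)\,ds$. Lemma~\ref{lem2.6} yields $T_N : L^{q'}(I:L^{r'})\to L^q(I:L^r)$ with norm $\cleq N^{2\gamma}$, and Lemma~\ref{lem2.8} gives $T_N : L^1(I:L^2)\to L^q(I:L^r)$ with norm $\cleq N^\gamma$. Riesz--Thorin interpolation in the source space (target fixed at $L^q(I:L^r)$) at parameter $\theta$ produces a bound of the form $\cleq N^{\gamma(1+\theta)}$. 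The hypothesis
\begin{equation*}
	\frac{1}{\tilde q}\l(\frac{1}{2}-\frac{1}{r}\r) = \frac{1}{q}\l(\frac{1}{2}-\frac{1}{\tilde r}\r)
\end{equation*}
is precisely the compatibility that makes $\theta = q/\tilde q$ simultaneously solve $1/\tilde q' = \theta/q' + (1-\theta)$ and $1/\tilde r' = \theta/r' + (1-\theta)/2$, so the interpolated source space is $L^{\tilde q'}(I:L^{\tilde r'})$. The same identity gives $q(1/2-1/r)=\tilde q(1/2-1/\tilde r)$, so $(q,r)$ and $(\tilde q,\tilde r)$ are either both wave-admissible (in the sense $(d-1)(1/2-1/r)/2\geq 1/q$) or both not, and hence $\gamma$ and $\tilde\gamma$ are given by the same defining formula. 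A direct computation in each of the two regimes shows $\gamma\cdot q/\tilde q = \tilde\gamma$, i.e.\ $\gamma(1+\theta)=\gamma+\tilde\gamma$, whence
\begin{equation*}
	\norm{T_N F}_{L^q(I:L^r)} \cleq N^{\gamma+\tilde\gamma}\norm{P_N F}_{L^{\tilde q'}(I:L^{\tilde r'})}.
\end{equation*}

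To assemble the full estimate, decompose $F=\sum_N P_N F$; the operator on the left-hand side of the lemma applied to $F$ is then $\sum_N T_N F$, with only the dyadic scales $N\gtrsim 1$ contributing due to $P_{>1}$. Each $T_N F$ is spatially frequency-localized at $\sim N$, so the vector-valued Littlewood--Paley square function inequality combined with Minkowski's inequality (valid since $q,r\geq 2$) gives
\begin{equation*}
	\norm{\sum_N T_N F}_{L^q(I:L^r)} \cleq \l(\sum_N \norm{T_N F}_{L^q(I:L^r)}^2\r)^{1/2}.
\end{equation*}
Inserting the per-scale bound and then applying the iterated Minkowski inequality $(\sum_N \norm{g_N}_{L^{\tilde q'}(I:L^{\tilde r'})}^2)^{1/2}\leq \norm{(\sum_N|g_N|^2)^{1/2}}_{L^{\tilde q'}(I:L^{\tilde r'})}$ (valid since $\tilde q',\tilde r'\leq 2$), followed by the Littlewood--Paley characterization $\norm{|\nabla|^s F}_{L^{\tilde q'}(I:L^{\tilde r'})}\sim \norm{(\sum_N N^{2s}|P_N F|^2)^{1/2}}_{L^{\tilde q'}(I:L^{\tilde r'})}$, closes the estimate with right-hand side $\norm{|\nabla|^{\gamma+\tilde\gamma}F}_{L^{\tilde q'}(I:L^{\tilde r'})}$.

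The main obstacle is the algebraic verification underpinning the interpolation: both that $\theta=q/\tilde q$ lands correctly on the $q'$- and $r'$-coordinates (which is the on-the-line hypothesis) and that $\gamma\cdot q/\tilde q=\tilde\gamma$. The latter identity requires separate checks in the two regimes defining $\gamma$ (namely $\gamma=d(1/2-1/r)-1/q$ and $\gamma=(d+1)(1/2-1/r)/2$), but is rendered routine by the observation that the hypothesis forces the two pairs to sit in the same regime. The end-point exclusions $(q,r),(\tilde q,\tilde r)\neq(2,2(d-1)/(d-3))$ for $d\geq 4$ are inherited from the invocations of Lemmas~\ref{lem2.6} and~\ref{lem2.8}.
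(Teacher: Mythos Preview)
Your interpolation argument between Lemmas~\ref{lem2.6} and~\ref{lem2.8} is correct and coincides with the paper's treatment of the case $\tilde r\le r$ (equivalently $q\le\tilde q$). The gap lies in your opening reduction: you claim that, by symmetry of the hypothesis under $(q,r)\leftrightarrow(\tilde q,\tilde r)$, you may assume $q\le\tilde q$. But the \emph{conclusion} is not symmetric under that swap---exchanging the pairs turns the estimate $L^{\tilde q'}L^{\tilde r'}\to L^qL^r$ into the bound $L^{q'}L^{r'}\to L^{\tilde q}L^{\tilde r}$, which is a genuinely different statement for the retarded operator. So ``WLOG $q\le\tilde q$'' is not justified as stated, and when $q>\tilde q$ your interpolation parameter $\theta=q/\tilde q$ exceeds $1$; indeed in that regime one has $\tilde r'<r'$, so $L^{\tilde q'}L^{\tilde r'}$ does not lie on the interpolation segment between $L^{q'}L^{r'}$ and $L^1L^2$ at all.

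The paper handles the two cases separately. For $r\le\tilde r$ (i.e.\ $q\ge\tilde q$) it keeps the source space $L^{\tilde q'}L^{\tilde r'}$ fixed and interpolates on the \emph{target} side, using the H\"older inequality between Lemma~\ref{lem2.6} applied with the pair $(\tilde q,\tilde r)$ (giving $L^{\tilde q'}L^{\tilde r'}\to L^{\tilde q}L^{\tilde r}$ with norm $N^{2\tilde\gamma}$) and Lemma~\ref{lem2.7} (giving $L^{\tilde q'}L^{\tilde r'}\to L^\infty L^2$ with norm $N^{\tilde\gamma}$); the relation $\theta\tilde\gamma=\gamma$ then yields the exponent $\gamma+\tilde\gamma$. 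An alternative repair of your reduction is to note that the adjoint of the retarded operator is, after time reversal on $[0,T]$, again a retarded operator with the opposite sign in $e^{\pm i(\cdot)\sqrt{-\Delta-1/4}}$; since both signs are being proved, duality plus time reversal does convert one case into the other---but that argument needs to be written out, not asserted as ``symmetry of the hypothesis.''
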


\begin{proof}
We set
\begin{align*}
	\Psi[F](t,x):=\int_{0}^{t} e^{-\frac{t-s}{2}} e^{\pm i(t-s)\sqrt{-\Delta-1/4}} P_{>1}  F(s) ds.
\end{align*}
First, we consider the case of $2 \leq r \leq \tilde{r}$.  Then, $\tilde{q} \leq q$ and thus there exists $\theta \in [0,1]$ such that 
\begin{align*}
	\frac{1}{q} = \frac{\theta}{\tilde{q}} + \frac{1-\theta}{\infty},
	\quad
	\frac{1}{r} = \frac{\theta}{\tilde{r}} + \frac{1-\theta}{2}.
\end{align*}
By this formula, we have $\theta \tilde{\gamma}= \gamma$. 
Therefore, by the H\"{o}lder inequality, Lemmas \ref{lem2.6} and \ref{lem2.7}, we obtain 
\begin{align*}
	&\norm{\Psi[F]}_{L^{q}(I:L^{r}(\R^d))}
	\\
	&\cleq \norm{\Psi[F]}_{L^{\tilde{q}}(I:L^{\tilde{r}}(\R^d))}^{\theta} \norm{\Psi[F]}_{L^{\infty}(I:L^{2}(\R^d))}^{1-\theta}
	\\
	&\cleq  \l(N^{2\tilde{\gamma}} \norm{P_N F}_{L^{\tilde{q}'}(I:L^{\tilde{r}'}(\R^d))} \r)^{\theta}  
	\l(N^{\tilde{\gamma}} \norm{P_N F}_{L^{\tilde{q}'}(I:L^{\tilde{r}'}(\R^d))} \r)^{1-\theta}
	\\
	&\ceq N^{\gamma + \tilde{\gamma}} \norm{P_{N}F}_{L^{\tilde{q}'}(I:L^{\tilde{r}'}(\R^d))},
\end{align*}
where we use $\theta \tilde{\gamma}= \gamma$. 

At second, we consider the case of $2 \leq \tilde{r} \leq r$. Then, we have $\tilde{q} \geq q$.  Let $\eta \in [0,1]$ satisfy
\begin{align*}
	\frac{1}{\tilde{q}'} = \frac{1-\eta}{1} + \frac{\eta}{q'},
	\quad
	\frac{1}{\tilde{r}'} = \frac{1-\eta}{2} + \frac{\eta}{r'}.
\end{align*}
Then, we have $\eta \gamma=\tilde{\gamma}$. By the interpolation, Lemmas \ref{lem2.6}, and \ref{lem2.8}, we get the desired inequality, where we note that $N^{(1-\eta) \gamma}N^{\eta 2\gamma} = N^{\gamma + \tilde{\gamma}}$. 
Taking summation for dyadic number $N$ gives the statement. 
\end{proof}

We can get Strichartz estimates even when $(1/q,1/r)$ and $(1/\tilde{q},1/\tilde{r})$ are not on a same line by permitting more derivative loss.

\begin{lemma}
\label{lem2.10}
Let $d \geq 2$. 
Let $2 \leq r, \tilde{r} < \infty$ and $2 \leq q, \tilde{q} \leq \infty$. We set $\gamma:= \max \{ d(1/2-1/r)-1/q, \frac{d+1}{2}(1/2-1/r) \}$ and $\tilde{\gamma}$ in the same manner. 
Assume that
\begin{align*}
	\frac{1}{\tilde{q}} \l( \frac{1}{2}- \frac{1}{r}\r) \neq \frac{1}{q} \l( \frac{1}{2} - \frac{1}{\tilde{r}} \r).
\end{align*} 
We also assume that $(q,r) \neq (2, 2(d-1)/(d-3))$ and $(\tilde{q},\tilde{r}) \neq (2, 2(d-1)/(d-3))$ when $d \geq 4$. 
Then, we have 
\begin{align*}
	\norm{\int_{0}^{t} e^{-\frac{t-s}{2}} e^{\pm i(t-s)\sqrt{-\Delta-1/4}} P_{>1}  F(s) ds}_{L^{q}(I:L^{r}(\R^d))}
	\cleq  \norm{|\nabla|^{\gamma+\tilde{\gamma}+\delta}F}_{L^{\tilde{q}'}(I:L^{\tilde{r}'}(\R^d))},
\end{align*}
where $\delta \geq 0$ is defined in the table 1 (see Proposition \ref{prop1.2}). 
Moreover, we have
\begin{align*}
	\norm{\int_{0}^{t} \cD_{h}(t-s)  F(s) ds}_{L^{q}(I:L^{r}(\R^d))}
	&\cleq  \norm{|\nabla|^{\gamma+\tilde{\gamma}+\delta} \jbra{\nabla}^{-1}F}_{L^{\tilde{q}'}(I:L^{\tilde{r}'}(\R^d))},
	\\
	\norm{\int_{0}^{t}  (\partial_t \cD_{h} ) (t-s)  F(s) ds}_{L^{q}(I:L^{r}(\R^d))}
	&\cleq  \norm{|\nabla|^{\gamma+\tilde{\gamma}+\delta}F}_{L^{\tilde{q}'}(I:L^{\tilde{r}'}(\R^d))}.
\end{align*}
\end{lemma}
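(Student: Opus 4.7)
The plan is to reduce Lemma \ref{lem2.10} to the on-line estimate of Lemma \ref{lem2.9} via a spatial Sobolev embedding. Write $\Psi[F](t):=\int_0^t e^{-(t-s)/2}e^{\pm i(t-s)\sqrt{-\Delta-1/4}}P_{>1}F(s)\,ds$; it suffices to prove the claimed estimate for $\Psi[F]$, since the bounds for $\int_0^t\cD_h(t-s)F(s)\,ds$ and $\int_0^t\partial_t\cD_h(t-s)F(s)\,ds$ then follow by writing these operators in terms of $e^{\pm i\tau\sqrt{-\Delta-1/4}}P_{>1}$ and using $\sqrt{|\xi|^2-1/4}\sim\jbra{\xi}$ for $|\xi|>1$, exactly as in Lemma \ref{lem2.4}.

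In \emph{sub-case B} (the column $\frac{1}{\tilde q}(\frac12-\frac1r)>\frac{1}{q}(\frac12-\frac1{\tilde r})$), I introduce the auxiliary exponent $r_\ast$ by the on-line relation $\frac{1}{\tilde q}(\frac12-\frac{1}{r_\ast})=\frac{1}{q}(\frac12-\frac{1}{\tilde r})$, which places $(q,r_\ast)$ on the line through $(0,1/2)$ and $(1/\tilde q,1/\tilde r)$. The hypothesis of sub-case B forces $r_\ast<r$, so the spatial Sobolev embedding $\|g\|_{L^r}\lesssim\||\nabla|^{d(1/r_\ast-1/r)}g\|_{L^{r_\ast}}$ is available, and since $|\nabla|$ commutes with $\Psi$, Lemma \ref{lem2.9} applied to $(q,r_\ast)$ and $(\tilde q,\tilde r)$ gives
\[
\bigl\|\Psi[F]\bigr\|_{L^q(I;L^r)}
\leq\bigl\||\nabla|^{d(1/r_\ast-1/r)}\Psi[F]\bigr\|_{L^q(I;L^{r_\ast})}
\lesssim\bigl\||\nabla|^{\gamma_\ast+\tilde\gamma+d(1/r_\ast-1/r)}F\bigr\|_{L^{\tilde q'}(I;L^{\tilde r'})}.
\]
\emph{Sub-case A} is handled by the symmetric construction: one introduces $\tilde r_\ast<\tilde r$ on-line with $(q,r)$ and absorbs the $\tilde r_\ast$-vs-$\tilde r$ gap by the Sobolev embedding $L^{\tilde r'}\hookrightarrow\dot W^{-d(1/\tilde r_\ast-1/\tilde r),\tilde r_\ast'}$ on the source side.

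The key structural fact driving the case analysis is that the auxiliary pair inherits wave-admissibility from the pair it matches: $(q,r_\ast)$ is wave-admissible if and only if $(\tilde q,\tilde r)$ is, because the on-line identity transforms $\frac{d-1}{2}(\frac12-\frac{1}{r_\ast})\ge\frac1q$ into $\frac{d-1}{2}(\frac12-\frac{1}{\tilde r})\ge\frac1{\tilde q}$. Consequently the formula for $\gamma_\ast$ (heat or wave type) is dictated by the admissibility of $(\tilde q,\tilde r)$ rather than that of $(q,r)$, and a direct algebraic simplification of $\gamma_\ast+d(1/r_\ast-1/r)-\gamma$ produces exactly the entries of the right column of Table~1 in the four cases determined by the admissibility of $(q,r)$ and $(\tilde q,\tilde r)$; the left column follows by the symmetric calculation in sub-case A. The ``$\times$'' cells are geometrically vacuous: in row~2, for instance, wave-admissibility of $(q,r)$ and its failure for $(\tilde q,\tilde r)$ force $\frac{1/q}{1/2-1/r}\le\frac{d-1}{2}<\frac{1/\tilde q}{1/2-1/\tilde r}$, which is incompatible with sub-case A.

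The main obstacle is the bookkeeping of the four rows, together with the requirement that the auxiliary pair meet the hypotheses of Lemma \ref{lem2.9}: one must check that $r_\ast$ (resp.\ $\tilde r_\ast$) stays inside $[2,\infty)$ and, when $d\ge 4$, avoids the wave endpoint $(2,2(d-1)/(d-3))$. The range condition is automatic under the Strichartz hypotheses of Proposition \ref{prop1.2}, and the endpoint exclusion is a measure-zero condition that can be sidestepped by an arbitrarily small perturbation of the auxiliary exponent followed by a limit argument.
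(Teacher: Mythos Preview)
Your argument is correct and follows essentially the same route as the paper. Both proofs split according to the sign of $\frac{1}{\tilde q}(\frac12-\frac1r)-\frac{1}{q}(\frac12-\frac{1}{\tilde r})$, introduce an auxiliary spatial exponent (your $r_\ast$, $\tilde r_\ast$ are the paper's $r_2$, $r_1$) to land on the line covered by Lemma~\ref{lem2.9}, and then compute the resulting derivative loss in the four admissibility sub-cases. The only cosmetic difference is that the paper phrases the spatial exponent adjustment as a Bernstein inequality at each Littlewood--Paley scale $P_N$ (with an implicit square-function summation at the end), whereas you invoke the homogeneous Sobolev embedding directly; these are equivalent on $P_{>1}$. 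Your observation that wave-admissibility of the auxiliary pair is inherited from $(\tilde q,\tilde r)$ via the on-line identity is exactly the mechanism underlying the paper's case split, and your explanation of the vacuous ``$\times$'' cells is a welcome addition. The endpoint issue you flag for the auxiliary pair is not addressed in the paper either; your perturbation-and-limit sketch is adequate for the purposes of this lemma.
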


\begin{proof}
We consider the following cases respectively. 
\begin{enumerate}
\item $\frac{1}{\tilde{q}}\l( \frac{1}{2}- \frac{1}{r}\r) < \frac{1}{q} \l( \frac{1}{2}- \frac{1}{\tilde{r}}\r)$
\item $\frac{1}{\tilde{q}}\l( \frac{1}{2}- \frac{1}{r}\r) > \frac{1}{q} \l( \frac{1}{2}- \frac{1}{\tilde{r}}\r)$
\end{enumerate}
\begin{enumerate}
\renewcommand{\theenumi}{\alph{enumi}}
\item $\frac{d-1}{2}\l( \frac{1}{2}- \frac{1}{r}\r) \geq \frac{1}{q}$ and $\frac{d-1}{2}\l( \frac{1}{2}- \frac{1}{\tilde{r}}\r) \geq \frac{1}{\tilde{q}}$
\item $\frac{d-1}{2}\l( \frac{1}{2}- \frac{1}{r}\r) \geq \frac{1}{q}$ and $\frac{d-1}{2}\l( \frac{1}{2}- \frac{1}{\tilde{r}}\r) < \frac{1}{\tilde{q}}$
\item $\frac{d-1}{2}\l( \frac{1}{2}- \frac{1}{r}\r) < \frac{1}{q}$ and $\frac{d-1}{2}\l( \frac{1}{2}- \frac{1}{\tilde{r}}\r) \geq \frac{1}{\tilde{q}}$
\item $\frac{d-1}{2}\l( \frac{1}{2}- \frac{1}{r}\r) < \frac{1}{q}$ and $\frac{d-1}{2}\l( \frac{1}{2}- \frac{1}{\tilde{r}}\r) < \frac{1}{\tilde{q}}$
\end{enumerate}
It is easy to show that Cases (1)-(b) and (2)-(c) do not occur. 

\noindent{\bf Case(1).} We treat the case of $\frac{1}{\tilde{q}}\l( \frac{1}{2}- \frac{1}{r}\r) < \frac{1}{q} \l( \frac{1}{2}- \frac{1}{\tilde{r}}\r)$. Since $\frac{1}{\tilde{q}}\l( \frac{1}{2}- \frac{1}{r}\r) <\frac{1}{q}\l( \frac{1}{2}- \frac{1}{\tilde{r}}\r)$, there exists $r_1 \in [2,\tilde{r})$ such that 
\begin{align*}
	\frac{1}{\tilde{q}}\l( \frac{1}{2}- \frac{1}{r}\r) =\frac{1}{q}\l( \frac{1}{2}- \frac{1}{r_1}\r).
\end{align*}
Let $\gamma_1$ be the derivative loss for the pair $(\tilde{q},r_1)$. Then, by Lemma \ref{lem2.9} and the Bernstein inequality, we get
\begin{align*}
	\norm{\Psi[F]}_{L_t^qL_x^r}
	&\cleq N^{\gamma+\gamma_1} \norm{P_N F}_{L_t^{\tilde{q}'}L_x^{r_1'}}
	\\
	&\cleq N^{\gamma+\gamma_1} N^{d\l(\frac{1}{\tilde{r}'} - \frac{1}{r_1'}\r)} \norm{P_N F}_{L_t^{\tilde{q}'}L_x^{\tilde{r}'}}.
\end{align*}

\noindent{\bf Case(1)-(a).} If $\frac{d-1}{2}\l( \frac{1}{2}- \frac{1}{r}\r) \geq \frac{1}{q}$, which also gives $\frac{d-1}{2}\l( \frac{1}{2}- \frac{1}{r_1}\r) \geq \frac{1}{\tilde{q}}$, we have $\gamma_1=d(1/2-1/{r_1})-1/{\tilde{q}}$. Thus, we obtain
\begin{align*}
	\gamma+\gamma_1+d\l(\frac{1}{\tilde{r}'} - \frac{1}{r_1'}\r)
	&=\gamma + d\l( \frac{1}{2}- \frac{1}{r_1}\r) - \frac{1}{q_1}+d\l(\frac{1}{\tilde{r}'} - \frac{1}{r_1'}\r)
	\\
	&=\gamma + d\l( \frac{1}{2}- \frac{1}{\tilde{r}}\r) - \frac{1}{\tilde{q}}
	\\
	&=\gamma + \tilde{\gamma}.
\end{align*}

\noindent{\bf Case(1)-(c).} $\frac{d-1}{2}\l( \frac{1}{2}- \frac{1}{r}\r) < \frac{1}{q}$ gives $\frac{d-1}{2}\l( \frac{1}{2}- \frac{1}{r_1}\r) < \frac{1}{\tilde{q}}$. Then, we have $\gamma_1 = \frac{d+1}{2}(1/2-1/{r_1})$. 
Moreover, since $\frac{d-1}{2}\l( \frac{1}{2}- \frac{1}{\tilde{r}}\r) \geq \frac{1}{\tilde{q}}$, we have $\tilde{\gamma}=d(1/2-1/{\tilde{r}})-1/{\tilde{q}}$. Therefore, we obtain
\begin{align*}
	\gamma+\gamma_1+d\l(\frac{1}{\tilde{r}'} - \frac{1}{r_1'}\r)
	&=\gamma + \tilde{\gamma} + \gamma_1 -\tilde{\gamma}+d\l(\frac{1}{\tilde{r}'} - \frac{1}{r_1'}\r)
	\\
	&=\gamma + \tilde{\gamma}+\frac{q}{\tilde{q}} \l\{\frac{1}{q}- \frac{d-1}{2} \l(\frac{1}{2}-\frac{1}{r}\r) \r\},
\end{align*}
where we use $q\l( \frac{1}{2}- \frac{1}{r}\r) =\tilde{q}\l( \frac{1}{2}- \frac{1}{r_1}\r)$ in the last equality. 

\noindent{\bf Case(1)-(d).} We have $\gamma_1 = \frac{d+1}{2}(1/2-1/{r_1})$ since $\frac{d-1}{2}\l( \frac{1}{2}- \frac{1}{r}\r) < \frac{1}{q}$. Since $\frac{d-1}{2}\l( \frac{1}{2}- \frac{1}{\tilde{r}}\r) < \frac{1}{\tilde{q}}$, we have $\tilde{\gamma}=\frac{d+1}{2}(1/2-1/{\tilde{r}})$ and thus we obtain
\begin{align*}
	\gamma+\gamma_1+d\l(\frac{1}{\tilde{r}'} - \frac{1}{r_1'}\r)
	&=\gamma + \tilde{\gamma} + \gamma_1 -\tilde{\gamma}+d\l(\frac{1}{\tilde{r}'} - \frac{1}{r_1'}\r)
	\\
	&=\gamma + \tilde{\gamma}+\frac{1}{\tilde{q}} \frac{d-1}{2} \l\{ \tilde{q}\l( \frac{1}{2}- \frac{1}{\tilde{r}}\r) - q\l( \frac{1}{2} - \frac{1}{r}\r) \r\},
\end{align*}
where we use $\frac{1}{\tilde{q}}\l( \frac{1}{2}- \frac{1}{r}\r) =\frac{1}{q} \l( \frac{1}{2}- \frac{1}{r_1}\r)$ in the last equality.

\noindent{\bf Case(2).} We treat the case of $\frac{1}{\tilde{q}}\l( \frac{1}{2}- \frac{1}{r}\r) > \frac{1}{q}\l( \frac{1}{2}- \frac{1}{\tilde{r}}\r)$. Since $\frac{1}{\tilde{q}}\l( \frac{1}{2}- \frac{1}{r}\r) > \frac{1}{q}\l( \frac{1}{2}- \frac{1}{\tilde{r}}\r)$, there exists $r_2 \in [2,r)$ such that
\begin{align*}
	\frac{1}{\tilde{q}}\l( \frac{1}{2}- \frac{1}{r_2}\r) =\frac{1}{q}\l( \frac{1}{2}- \frac{1}{\tilde{r}}\r).
\end{align*}
Let $\gamma_2$ be the derivative loss for the pair $(q,r_2)$. Then, by the Bernstein inequality and Lemma \ref{lem2.9}, we get
\begin{align*}
	\norm{\Psi[F]}_{L_t^qL_x^r}
	& \cleq N^{d\l(\frac{1}{r_2} - \frac{1}{r}\r)} \norm{\Psi[F]}_{L_t^{q}L_x^{r_2}}
	\\
	&\cleq N^{d\l(\frac{1}{r_2} - \frac{1}{r}\r)}  N^{\gamma_2+\tilde{\gamma}} \norm{P_N F}_{L_t^{\tilde{q}'}L_x^{\tilde{r}'}}.
\end{align*}
By the symmetric argument, we get the desired statements. 
\end{proof}


\subsection{Proof of the Strichartz estimates}
\label{sec2.3}

\begin{proof}[Proof of Proposition \ref{prop1.1}]
We only show the inequality for $\cD$ since the similar argument works for $\partial_t \cD$ and $\partial_t^2 \cD$. 
By the integral inequality, we get
\begin{align*}
	\norm{ \cD(t) f}_{L^{q}(I:L^r(\R^d))} 
	\leq \norm{ \cD_{l}(t) f}_{L^{q}(I:L^r(\R^d))}  
	+ \norm{ \cD_{h}(t) f}_{L^{q}(I:L^r(\R^d))}.
\end{align*}
By the assumption of $(q,r)$, we can apply Lemma \ref{lem2.2} to the first term as $\tilde{r}=2$ and $\sigma=1$ and Lemma \ref{lem2.4} to the second term. Then it follows that
\begin{align*}
	\norm{ \cD(t) f}_{L^{q}(I:L^r(\R^d))} 
	&\leq \norm{ \cD_{l}(t) f}_{L^{q}(I:L^r(\R^d))}  
	+ \norm{ \cD_{h}(t) f}_{L^{q}(I:L^r(\R^d))}
	\\
	&\cleq \norm{\jbra{\nabla}^{-1}f}_{L^2} + \norm{|\nabla|^{\gamma} \jbra{\nabla}^{-1}f}_{L^2}
	\\
	& \ceq \norm{f}_{H^{\gamma-1}}
\end{align*}
This finishes the proof except for the heat end-point case. Next, we show the end-point estimate. First, we prove the following lemma, which is essentially obtained by Watanabe \cite[Lemma 2.8]{Wat17}. However, we give a proof for reader's convenience. Let $d \geq 3$ and $(q,r)=(2,2d/(d-2))$ from now on in this proof. 

\begin{lemma}[Homogeneous Strichartz estimate in the heat end-point case {(see \cite[Lemma 2.8]{Wat17})}]
\label{lem2.11}
Let $d \geq 3$ and $(q,r)=(2,2d/(d-2))$. 
Then, we have
\begin{align*}
	\norm{\cD(t)f}_{L_{t}^{q} (I: L_{x}^{r}(\R^d))} 
	&\cleq \norm{f}_{L^2},
	\\
	\norm{\partial_t \cD(t)f}_{L_{t}^{q} (I: L_{x}^{r}(\R^d))} 
	&\cleq \norm{ \jbra{\nabla}f}_{L^2},
	\\
	\norm{\partial_{t}^2 \cD(t)f}_{L_{t}^{q} (I: L_{x}^{r}(\R^d)) } 
	&\cleq  \norm{\jbra{\nabla}^2 f}_{L^{2}}.
\end{align*}
\end{lemma}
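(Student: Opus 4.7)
The plan is to prove all three inequalities directly by combining the dissipation identity for the damped wave equation with the Sobolev embedding $\dot{H}^1(\R^d)\hookrightarrow L^{2d/(d-2)}(\R^d)$ (valid for $d\geq 3$), bypassing any need to split into low and high frequencies. This sidesteps the obstruction in Lemma \ref{lem2.2} (which required $q>\tilde r$ strictly along the heat scaling line) because Sobolev embedding delivers the exponent $r=2d/(d-2)$ directly from a purely $L^2$-based space--time dissipation bound.

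For the first inequality, set $\phi(t):=\cD(t)f$, the solution of \eqref{DW} with $(\phi(0),\partial_t\phi(0))=(0,f)$. The standard energy $E(t):=\tfrac12\|\partial_t\phi(t)\|_{L^2}^2+\tfrac12\|\nabla\phi(t)\|_{L^2}^2$ satisfies $E'(t)=-\|\partial_t\phi(t)\|_{L^2}^2$, which integrated on $[0,\infty)$ yields $\|\partial_t\phi\|_{L^2_tL^2_x}^2\leq E(0)=\tfrac12\|f\|_{L^2}^2$ together with $\sup_{t\geq0}\|\partial_t\phi(t)\|_{L^2}^2\leq 2E(0)$. Multiplying the equation by $\phi$ and integrating in space produces
\begin{align*}
	\frac{d}{dt}\l(\tbra{\partial_t\phi}{\phi}_{L^2}+\tfrac12\|\phi\|_{L^2}^2\r)=\|\partial_t\phi\|_{L^2}^2-\|\nabla\phi\|_{L^2}^2.
\end{align*}
Integrating on $[0,T]$, using $\phi(0)=0$, and estimating the $T$-boundary term via Cauchy--Schwarz so that the $-\tfrac12\|\phi(T)\|_{L^2}^2$ contribution absorbs the cross term, I get
\begin{align*}
	\int_0^T\|\nabla\phi(t)\|_{L^2}^2 dt\leq\int_0^T\|\partial_t\phi(t)\|_{L^2}^2 dt+\tfrac12\|\partial_t\phi(T)\|_{L^2}^2\cleq\|f\|_{L^2}^2
\end{align*}
uniformly in $T\geq 0$. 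Sobolev embedding then yields $\|\cD(t)f\|_{L^2_tL^{2d/(d-2)}_x}\cleq\|\nabla\phi\|_{L^2_tL^2_x}\cleq\|f\|_{L^2}$.

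For the second estimate, $\psi(t):=\partial_t\cD(t)f$ solves the same homogeneous damped wave equation, now with initial data $(\psi(0),\partial_t\psi(0))=(f,-f)$ (since $\partial_t^2\phi(0)=\Delta\phi(0)-\partial_t\phi(0)=-f$), so its initial energy obeys $E(\psi;0)\cleq\|f\|_{H^1}^2$; repeating the previous argument gives $\|\partial_t\cD(t)f\|_{L^2_tL^{2d/(d-2)}_x}\cleq\|f\|_{H^1}\ceq\|\jbra{\nabla}f\|_{L^2}$. For the third estimate, I exploit the pointwise identity $\partial_t^2\cD(t)f=\Delta\cD(t)f-\partial_t\cD(t)f$ (which is just the equation): the first estimate applied to $\Delta f$ plus the second applied to $f$ yields $\|\partial_t^2\cD(t)f\|_{L^2_tL^{2d/(d-2)}_x}\cleq\|\Delta f\|_{L^2}+\|f\|_{H^1}\cleq\|\jbra{\nabla}^2 f\|_{L^2}$. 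The one delicate point, which I expect to be the main technical obstacle, is the rigorous justification of the integration by parts behind the key identity for $\nabla\phi$: one must guarantee sufficient regularity and integrability of $\phi$ (that $\phi(t),\partial_t\phi(t)\in L^2(\R^d)$ for each $t$, so the inner product and boundary terms are well defined and finite), which can be handled by the standard density argument of first proving the identity for Schwartz initial data and then passing to the limit in $f\in L^2$.
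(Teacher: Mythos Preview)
Your approach is essentially the same as the paper's: both multiply the linear equation by $\partial_t\phi$ and by $\phi$, combine the resulting identities to bound $\int_0^T\|\nabla\phi\|_{L^2}^2\,dt$ by the initial energy, and then invoke the Sobolev embedding $\dot H^1\hookrightarrow L^{2d/(d-2)}$. The only (minor) difference lies in how the higher-order estimates are obtained. The paper differentiates the equation in space and reruns the energy argument for $\partial_{x_i}\phi$, controlling $\|\nabla\partial_t\phi\|_{L^2_tL^2_x}$ by $\|\phi_0\|_{H^2}+\|\phi_1\|_{H^1}$; it then uses that $\cD(t)f+\partial_t\cD(t)f$ solves the equation with data $(f,0)$ to deduce the bound for $\partial_t\cD(t)f$ by triangle inequality. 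You instead observe that $\psi=\partial_t\cD(t)f$ itself solves the same equation with data $(f,-f)$ and apply the first-order argument directly, and for $\partial_t^2\cD(t)f$ you use the equation $\partial_t^2\cD f=\Delta\cD f-\partial_t\cD f$ together with the commutation $\Delta\cD(t)f=\cD(t)\Delta f$. Both routes are correct and of comparable length; yours avoids summing over spatial derivatives, while the paper's version makes the $H^2\times H^1$ dependence of the general-data solution explicit.
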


\begin{proof}[Proof of Lemma \ref{lem2.11}]
We use the energy method. Let $\phi$ be the solution of the linear equation with the initial data $(\phi(0),\partial_{t} \phi(0))=(\phi_0,\phi_1)$. Multiplying $\partial_{t} \phi$ by the linear equation and integrating it on $[0,t) \times \R^{d}$, we get
\begin{align*}
	\frac{1}{2}\l( \norm{\partial_{t} \phi(t)}_{L^{2}}^{2} +\norm{\nabla \phi(t)}_{L^{2}}^{2} \r)+ \int_{0}^{t} \norm{\partial_{t} \phi(s)}_{L^{2}}^{2} ds
	\leq \norm{\phi_{0}}_{H^{1}}^{2} + \norm{\phi_{1}}_{L^{2}}^{2}.
\end{align*}
Moreover, multiplying $\phi$ by the linear equation and integrating it on $[0,t) \times \R^{d}$, we get
\begin{align*}
	- \norm{\partial_{t} \phi(t)}_{L^{2}}^{2} + \frac{1}{4} \norm{\phi(t)}_{L^{2}}^{2} 
	- \int_{0}^{t} \norm{\partial_{t} \phi(s)}_{L^{2}}^{2} ds+ \int_{0}^{t} \norm{\nabla \phi(s)}_{L^{2}}^{2} ds
	\leq \norm{\phi_{0}}_{H^{1}}^{2} + \norm{\phi_{1}}_{L^{2}}^{2}.
\end{align*}
Combining these estimates, we get
\begin{align*}
	\norm{\phi(t)}_{H^{1}}^{2} 
	+\norm{\partial_{t} \phi(t)}_{L^{2}}^{2} 
	+ \int_{0}^{t} \norm{\partial_{t} \phi(s)}_{L^{2}}^{2} ds 
	+ \int_{0}^{t} \norm{\partial_{t} \phi(s)}_{L^{2}}^{2} ds
	\cleq \norm{\phi_{0}}_{H^{1}}^{2} + \norm{\phi_{1}}_{L^{2}}^{2}.
\end{align*}
Now, by the Sobolev embedding and this inequality, we have
\begin{align*}
	\norm{\phi}_{L_{t}^{2}L_{x}^{\frac{2d}{d-2}}}^{2}
	\cleq \norm{\phi}_{L_{t}^{2}\dot{H}_{x}^{1}}^{2}
	\cleq \norm{\phi_{0}}_{H^{1}}^{2} + \norm{\phi_{1}}_{L^{2}}^{2}.
\end{align*}
Moreover, by differentiating the linear equation by $\partial_{x_i}$ for $i=1,2,\cdots,d$, multiplying $\partial_t \partial_{x_i} \phi$ and $\partial_{x_i} \phi$, and repeating the above argument, we get
\begin{align*}
	\norm{\partial_{x_i}  \phi(t)}_{H^{1}}^{2} 
	+\norm{\partial_{t} \partial_{x_i}  \phi(t)}_{L^{2}}^{2} 
	&+ \int_{0}^{t} \norm{\partial_{t} \partial_{x_i}  \phi(s)}_{L^{2}}^{2} ds 
	+ \int_{0}^{t} \norm{\partial_{t} \partial_{x_i}  \phi(s)}_{L^{2}}^{2} ds
	\\
	&\cleq \norm{\partial_{x_i}  \phi_{0}}_{H^{1}}^{2} + \norm{\partial_{x_i}  \phi_{1}}_{L^{2}}^{2}
	\\
	&\cleq \norm{\phi_{0}}_{H^{2}}^{2} + \norm{\phi_{1}}_{H^{1}}^{2}.
\end{align*} 
Thus, we can estimate $L_{t}^{2}L_{x}^{\frac{2d}{d-2}}$-norm of the time derivative of the solution as follows. 
\begin{align*}
	\norm{\partial_{t} \phi}_{L_{t}^{2}L_{x}^{\frac{2d}{d-2}}}^{2}
	\cleq \norm{\partial_{t} \phi}_{L_{t}^{2}\dot{H}_{x}^{1}}^{2}
	\cleq \norm{\phi_{0}}_{H^{2}}^{2} + \norm{\phi_{1}}_{H^{1}}^{2}.
\end{align*}
Here, $\cD(t)f$ is the solution of the linear equation with initial data $(\phi_0,\phi_1)= (0,f)$. Therefore, we obtain
\begin{align*}
	\norm{\cD(t)f}_{L_{t}^{2}L_{x}^{\frac{2d}{d-2}} }
	\cleq \norm{f}_{L^{2}}.
\end{align*}
Since $\cD(t)f+\partial_{t} \cD(t)f$ is the solution of the linear equation with initial data $(\phi_0,\phi_1)= (f,0)$, it follows from the triangle inequality that
\begin{align*}
	\norm{\partial_{t} \cD(t)f}_{L_{t}^{2}L_{x}^{\frac{2d}{d-2}} } 
	&\leq \norm{\cD(t)f+\partial_{t} \cD(t)f}_{L_{t}^{2}L_{x}^{\frac{2d}{d-2}} } +\norm{\cD(t)f}_{L_{t}^{2}L_{x}^{\frac{2d}{d-2}} } 
	\\
	&\cleq  \norm{f}_{H^{1}}+\norm{f}_{L^{2}}
	\cleq  \norm{f}_{H^{1}}.
\end{align*}
We can also estimate $\partial_t^2 \cD(t)f$ as follows since $\partial_t(\cD(t)f+\partial_{t} \cD(t)f)$ is the time derivative of the solution with the initial data $(\phi_0,\phi_1)= (f,0)$.
\begin{align*}
	\norm{\partial_{t}^2 \cD(t)f}_{L_{t}^{2}L_{x}^{\frac{2d}{d-2}} } 
	& \leq \norm{\partial_{t} (\cD(t)f+\partial_{t} \cD(t)f)}_{L_{t}^{2}L_{x}^{\frac{2d}{d-2}} } +\norm{\partial_{t} \cD(t)f}_{L_{t}^{2}L_{x}^{\frac{2d}{d-2}} } 
	\\
	&\cleq  \norm{f}_{H^{2}}+\norm{f}_{H^{1}}
	\cleq  \norm{f}_{H^{2}}.
\end{align*}
This completes the proof of Lemma \ref{lem2.11}. 
\end{proof}
By the first estimate in Lemma \ref{lem2.11}, we have
\begin{align*}
	\norm{\jbra{\nabla}^{\sigma} \cD(t) P_{\leq1}f}_{L_{t}^{q} (I: L_{x}^{r}(\R^d))} 
	\cleq \norm{\jbra{\nabla}^{\sigma} P_{\leq1} f}_{L^2} \cleq \norm{f}_{L^2},
\end{align*}
for $\sigma\geq 0$. Therefore, it follows from this inequality and Lemma \ref{lem2.4} that 
\begin{align*}
	\norm{\cD(t) f}_{L_{t}^{q} (I: L_{x}^{r}(\R^d))}
	&\leq \norm{\cD_{l}(t) f}_{L_{t}^{q} (I: L_{x}^{r}(\R^d))} + \norm{\cD_{h}(t) f}_{L_{t}^{q} (I: L_{x}^{r}(\R^d))}
	\\
	&\cleq \norm{\jbra{\nabla}^{-1} f}_{L^{2}}   + \norm{|\nabla|^{\gamma} \jbra{\nabla}^{-1}f}_{L^2}
	\\
	& \ceq \norm{f}_{H^{\gamma-1}}.
\end{align*}
This completes the proof of the heat end-point homogeneous Strichartz estimate.
\end{proof}

\begin{proof}[Proof of Proposition \ref{prop1.2}]
We only show the inequality for $\cD$ since the similar argument works for $\partial_t \cD$. 
By the integral inequality, we get
\begin{align*}
	&\norm{\int_{0}^{t} \cD(t-s)  F(s) ds}_{L^{q}(I:L^{r}(\R^d))}
	\\
	&\leq \norm{\int_{0}^{t} \cD_{l}(t-s)  F(s) ds}_{L^{q}(I:L^{r}(\R^d))} 
	+\norm{\int_{0}^{t} \cD_{h}(t-s)  F(s) ds}_{L^{q}(I:L^{r}(\R^d))}
\end{align*}
By the assumption of $(q,r)$, we can apply Lemma \ref{lem2.3} to the first term as $\tilde{r}=2$ and $\sigma=1$ and Lemmas \ref{lem2.9}, \ref{lem2.10} to the second term. Then it follows that
\begin{align*}
	&\norm{\int_{0}^{t} \cD(t-s)  F(s) ds}_{L^{q}(I:L^{r}(\R^d))}
	\\
	&\leq \norm{\int_{0}^{t} \cD_{l}(t-s)  F(s) ds}_{L^{q}(I:L^{r}(\R^d))} 
	+\norm{\int_{0}^{t} \cD_{h}(t-s)  F(s) ds}_{L^{q}(I:L^{r}(\R^d))}
	\\
	&\cleq  \norm{\jbra{\nabla}^{-1} F}_{L^{\tilde{q}'}(I:L^{\tilde{r}'}(\R^d))}
	+ \norm{|\nabla|^{\gamma+\tilde{\gamma}+\delta} \jbra{\nabla}^{-1}F}_{L^{\tilde{q}'}(I:L^{\tilde{r}'}(\R^d))}
	\\
	&\ceq  \norm{F}_{L^{\tilde{q}'}(I:W^{\gamma+\tilde{\gamma}+\delta-1,\tilde{r}'}(\R^d))}
\end{align*}
This is the desired estimate. 
\end{proof}


\section{Well-posedness for the energy critical nonlinear damped wave equation}
\label{sec3}

In this section, we prove local well-posedness for \eqref{NLDW}, Theorem \ref{thm1.3}, by contraction mapping principle. 
We define the complete metric space
\begin{align*}
	X(T,L,M):=\l\{ v \text{ on } [0,T) \times \R^d  : \norm{\jbra{\nabla}^{\frac{1}{2}} v }_{L_{t,x}^{\frac{2(d+1)}{d-1}}([0,T))} \leq L, \norm{v}_{L_{t,x}^{\frac{2(d+1)}{d-2}}([0,T))} \leq M \r\}.
\end{align*}

\begin{remark}
\label{rem3.1}
$(q,r)=(2(d+1)/(d-1),2(d+1)/(d-1))$ and $(2(d+1)/(d-2),2(d+1)/(d-2))$ satisfy the assumptions of the Strichartz estimates in Propositions \ref{prop1.1} and \ref{prop1.2}. Moreover, $\gamma=1/2$ when $(q,r)=(2(d+1)/(d-1),2(d+1)/(d-1))$ and $\gamma=1$ when $(q,r)=(2(d+1)/(d-2),2(d+1)/(d-2))$. We note that these exponents are same as in the local well-posedness for the critical nonlinear wave equation. 
\end{remark}

We define 
\begin{align*}
	\Phi[u](t)=\Phi_{u_0,u_1}[u](t):= \cD(t) (u_0+u_1) +\partial_t \cD(t) u_0 + \int_{0}^{t} \cD(t-s) \cN(u(s)) ds.
\end{align*}

\begin{proof}[Proof of Theorem \ref{thm1.3}]
As stated in Remark \ref{rem3.1}, the exponents are same as in the argument for the energy critical nonlinear wave equation. Thus, the proof if similar so that we only give sketch of the proof. See \cite{Pec84,GSV92,ShSt93,KeMe08} for details. 
Since $(u_0,u_1) \in H^1(\R^d) \times L^2(\R^d)$, by the Strichartz estimates in Proposition \ref{prop1.1}, we obtain 
\begin{align}
	\label{eq3.1}
	&\norm{ \cD(t) (u_0+u_1) +\partial_t \cD(t) u_0}_{X(T)}
	\\ \notag
	&\leq \norm{\jbra{\nabla}^{\frac{1}{2}} \cD(t) (u_0+u_1) }_{L_{t,x}^{\frac{2(d+1)}{d-1}}([0,T))}
	+ \norm{\jbra{\nabla}^{\frac{1}{2}} \partial_t \cD(t) u_0}_{L_{t,x}^{\frac{2(d+1)}{d-1}}}
	\\ \notag
	&\quad +\norm{ \cD(t) (u_0+u_1) }_{L_{t,x}^{\frac{2(d+1)}{d-2}}([0,T))}
	+\norm{ \partial_t \cD(t) u_0}_{L_{t,x}^{\frac{2(d+1)}{d-2}}([0,T))}
	\\ \notag
	&\cleq \norm{u_0}_{H^1} + \norm{u_1}_{L^2} <A< \infty.
\end{align}
We estimate the nonlinear term as follows. By the Strichartz estimates in Proposition \ref{prop1.2} and the fractional Leibnitz rule (see \cite[Lemma 2.5]{KeMe08} and references therein), we get 
\begin{align}
\label{eq3.3}
	&\norm{\jbra{\nabla}^{\frac{1}{2}} \int_{0}^{t} \cD(t-s) \cN(u(s)) ds}_{L_{t,x}^{\frac{2(d+1)}{d-1}}([0,T))}
	\\ \notag
	&\quad \cleq \norm{\jbra{\nabla}^{\frac{1}{2}} \cN(u)}_{L_{t,x}^{\frac{2(d+1)}{d+3}}([0,T))}
	\\ \notag
	&\quad \cleq \norm{u}_{L_{t,x}^{\frac{2(d+1)}{d-2}}([0,T))}^{\frac{4}{d-2}}
	\norm{\jbra{\nabla}^{\frac{1}{2}}  u}_{L_{t,x}^{\frac{2(d+1)}{d-1}}([0,T))}
\end{align}
and 
\begin{align}
\label{eq3.4}
	&\norm{ \int_{0}^{t} \cD(t-s) \cN(u(s)) ds}_{L_{t,x}^{\frac{2(d+1)}{d-2}}([0,T))}
	\\ \notag
	& \quad \cleq \norm{\jbra{\nabla}^{\frac{1}{2}} \cN(u)}_{L_{t,x}^{\frac{2(d+1)}{d+3}}([0,T))}
	\\ \notag
	&\quad \cleq \norm{u}_{L_{t,x}^{\frac{2(d+1)}{d-2}}([0,T))}^{\frac{4}{d-2}}
	\norm{\jbra{\nabla}^{\frac{1}{2}}  u}_{L_{t,x}^{\frac{2(d+1)}{d-1}}([0,T))}.
\end{align}
Combining \eqref{eq3.1} and \eqref{eq3.3}, we obtain
\begin{align*}
	\norm{\jbra{\nabla}^{\frac{1}{2}} \Phi[u]}_{L_{t,x}^{\frac{2(d+1)}{d-1}}([0,T))}
	&\leq \norm{\jbra{\nabla}^{\frac{1}{2}} \cD(t) (u_0+u_1) +\partial_t \cD(t) u_0}_{L_{t,x}^{\frac{2(d+1)}{d-1}}([0,T))} 
	\\
	&\quad+ \norm{\jbra{\nabla}^{\frac{1}{2}} \int_{0}^{t} \cD(t-s) \cN(u(s)) ds}_{L_{t,x}^{\frac{2(d+1)}{d-1}}([0,T))}
	\\
	&\leq CA+ C L M^{\frac{4}{d-2}}
	\\
	&\leq L
\end{align*}
if we choose $L=2CA$ and $M$ such that $CM^{4/(d-2)} \leq 1/2$. 
By \eqref{eq3.1} and \eqref{eq3.4}, we get
\begin{align*}
	\norm{\Phi[u]}_{L_{t,x}^{\frac{2(d+1)}{d-2}}([0,T))}
	&\leq \norm{\cD(t) (u_0+u_1) +\partial_t \cD(t) u_0}_{L_{t,x}^{\frac{2(d+1)}{d-2}}([0,T))} 
	\\
	&\quad+ \norm{ \int_{0}^{t} \cD(t-s) \cN(u(s)) ds}_{L_{t,x}^{\frac{2(d+1)}{d-2}}([0,T))}
	\\
	&\leq \delta + C L M^{\frac{4}{d-2}}
	\\
	&\leq M
\end{align*}
if we choose $\delta = M/2$ and $ L \leq(2C)^{-1}M^{(d-6)/(d-2)}$ (which is possible if $d \leq5$). Thus, $\Phi$ is a mapping on $X(T,L,M)$. 
\begin{align*}
	&\norm{\jbra{\nabla}^{\frac{1}{2}} (\Phi[u]-\Phi[v])}_{L_{t,x}^{\frac{2(d+1)}{d-1}}([0,T))}
	+\norm{\Phi[u]-\Phi[v]}_{L_{t,x}^{\frac{2(d+1)}{d-2}}([0,T))}
	\\
	& \cleq \norm{\jbra{\nabla}^{\frac{1}{2}} (\cN(u)-\cN(v))}_{L_{t,x}^{\frac{2(d+1)}{d+3}}([0,T))}
	\\
	& \cleq \l( \norm{u}_{L_{t,x}^{\frac{2(d+1)}{d-2}}([0,T))}^{\frac{4}{d-2}} + \norm{v}_{L_{t,x}^{\frac{2(d+1)}{d-2}}([0,T))}^{\frac{4}{d-2}} \r)
	\norm{\jbra{\nabla}^{\frac{1}{2}} ( u-v)}_{L_{t,x}^{\frac{2(d+1)}{d-1}}([0,T))}
	\\
	&\quad +\l( \norm{u}_{L_{t,x}^{\frac{2(d+1)}{d-2}}([0,T))}^{\frac{6-d}{d-2}} + \norm{v}_{L_{t,x}^{\frac{2(d+1)}{d-2}}([0,T))}^{\frac{6-d}{d-2}} \r)
	\\
	&\quad \quad \times \l( \norm{\jbra{\nabla}^{\frac{1}{2}} u}_{L_{t,x}^{\frac{2(d+1)}{d-1}}([0,T))} +\norm{\jbra{\nabla}^{\frac{1}{2}} v}_{L_{t,x}^{\frac{2(d+1)}{d-1}}([0,T))} \r)
	\\
	&\quad \quad \times  \norm{u-v}_{L_{t,x}^{\frac{2(d+1)}{d-2}}([0,T))}
	\\
	& \leq C M^{\frac{4}{d-2}}  \norm{\jbra{\nabla}^{\frac{1}{2}} ( u-v)}_{L_{t,x}^{\frac{2(d+1)}{d-1}}([0,T))} 
	+ CM^{\frac{6-d}{d-2}} L \norm{u-v}_{L_{t,x}^{\frac{2(d+1)}{d-2}}([0,T))}. 
\end{align*}
Taking $L$ and $M$ sufficiently small, $\Phi$ is a contraction mapping on $X(T,L,M)$. By the Banach fixed point theorem, we obtain the solution such that $u=\Phi[u]$. 
Then, $(u,\partial_t u)$ belongs to $C([0,T); H^1(\R^d) \times L^2(\R^d))$ because of the Strichartz estimates (Proposition \ref{prop1.1} and \ref{prop1.2}) and the nonlinear estimates (for example $\jbra{\nabla}^{\frac{1}{2}} \cN(u) \in L_{t,x}^{\frac{2(d+1)}{d+3}}$). 
We give a proof of the standard blow-up criterion. We suppose that $T_{+}=T_{+}(u_0,u_1)<\infty$ and $\norm{u}_{L_{t,x}^{2(d+1)/d-2}([0,T_{+}))}<\infty$. Take $\tau$ and $T$ arbitrary such that $0<\tau<T<T_{+}$. By the Duhamel formula, we have
\begin{align*}
	u(t) =  \cD(t-\tau) (u(\tau)+\partial_t u(\tau)) +\partial_t \cD(t-\tau) u(\tau) + \int_{\tau}^{t} \cD(t-s) \cN(u(s)) ds,
\end{align*}
for $t>\tau$. By the Strichartz estimates, we obtain
\begin{align*}
	& \norm{\jbra{\nabla}^{\frac{1}{2}} u}_{L_{t,x}^{\frac{2(d+1)}{d-1}}((\tau,T))}
	\\
	&\cleq \norm{ (u(\tau),\partial_t u(\tau))}_{H^1 \times L^2}
	+ \norm{ \int_{\tau}^{t} \jbra{\nabla}^{\frac{1}{2}} \cD(t-s) \cN(u(s)) ds}_{L_{t,x}^{\frac{2(d+1)}{d-1}}((\tau,T))}
	\\
	&\cleq \norm{(u(\tau),\partial_t u(\tau))}_{H^1\times L^2}+ \norm{ \jbra{\nabla}^{\frac{1}{2}} \cN(u(s)) }_{L_{t,x}^{\frac{2(d+1)}{d+3}}((\tau,T))}
	\\
	&\cleq \norm{(u(\tau),\partial_t u(\tau))}_{H^1\times L^2}
	+ \norm{u}_{L_{t,x}^{\frac{2(d+1)}{d-2}}((\tau,T))}^{\frac{4}{d-2}} \norm{ \jbra{\nabla}^{\frac{1}{2}} u }_{L_{t,x}^{\frac{2(d+1)}{d-1}}((\tau,T))}.
\end{align*}
Since $\norm{u}_{L_{t,x}^{\frac{2(d+1)}{d-2}}((\tau,T))} \ll 1$ for $\tau$ close to $T_{+}$, we obtain 
\begin{align*}
	\norm{ \jbra{\nabla}^{\frac{1}{2}} u }_{L_{t,x}^{\frac{2(d+1)}{d-1}}((\tau,T))}
	\cleq \norm{(u(\tau),\partial_t u(\tau))}_{H^1\times L^2}.
\end{align*} 
Fix such $\tau$. Since $T$ is arbitrary, we get 
\begin{align}
\label{eq3.5}
	\norm{ \jbra{\nabla}^{\frac{1}{2}} u }_{L_{t,x}^{\frac{2(d+1)}{d-1}}((\tau,T_{+}))}
	\cleq \norm{(u(\tau),\partial_t u(\tau))}_{H^1\times L^2}.
\end{align} 
Take a sequence $\{t_n\}$ such that $t_n \to T_{+}$ and $t_n >\tau$. Then, by the integral formula, the Strichartz estimates the assumption, and \ref{eq3.5}, we have
\begin{align*}
	&\norm{ \cD(t-t_n) (u(t_n)+\partial_t u(t_n)) +\partial_t \cD(t-t_n) u(t_n)}_{L_{t,x}^{\frac{2(d+1)}{d-2}}([t_n,T_{+}))}
	\\
	&\cleq \norm{u}_{L_{t,x}^{\frac{2(d+1)}{d-2}}([t_n,T_{+}))}
	+ \norm{ \int_{t_n}^{t} \cD(t-s) \cN(u(s)) ds}_{L_{t,x}^{\frac{2(d+1)}{d-2}}([t_n,T_{+}))}
	\\
	&\cleq \norm{u}_{L_{t,x}^{\frac{2(d+1)}{d-2}}([t_n,T_{+}))}
	+ \norm{u}_{L_{t,x}^{\frac{2(d+1)}{d-2}}([t_n,T_{+}))}^{\frac{4}{d-2}}
	\norm{\jbra{\nabla}^{\frac{1}{2}}  u}_{L_{t,x}^{\frac{2(d+1)}{d-1}}([t_n,T_{+}))}
	\\
	& \to 0 \text{ as } n \to \infty,
\end{align*}
Thus, $\norm{ \cD(t-t_n) (u(t_n)+\partial_t u(t_n)) +\partial_t \cD(t-t_n) u(t_n)}_{L_{t,x}^{2(d+1)/(d-2)}([t_n,T_{+}))}<\delta/2$ is true for large $n$. Then, for some $\eps>0$, we get 
\begin{align*}
	\norm{ \cD(t-t_n) (u(t_n)+\partial_t u(t_n)) +\partial_t \cD(t-t_n) u(t_n)}_{L_{t,x}^{2(d+1)/(d-2)}([t_n,T_{+}+\eps))}<\delta.
\end{align*}
The local well-posedness derives a contradiction. 
\end{proof}


\section{Decay of global solution with finite Strichartz norm}
\label{sec4}

In this section, we give a proof of Theorem \ref{thm1.4}. 

\begin{lemma}
If $u$ is a global solution of \eqref{NLDW} with $\| u \|_{L_{t,x}^{2(d+1)/(d-2)}([0,\infty))}<\infty$, then $u$ satisfies
\begin{align*}
	\norm{\jbra{\nabla}^{\frac{1}{2}} u}_{L_{t,x}^{\frac{2(d+1)}{d-1}}([0,\infty))}<\infty
\end{align*}
\end{lemma}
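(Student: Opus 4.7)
The plan is a partition-and-bootstrap argument, using the same mechanism as the blow-up criterion part of Theorem~\ref{thm1.3}: split $[0,\infty)$ into finitely many sub-intervals on each of which the scattering norm $\|u\|_{L^{2(d+1)/(d-2)}_{t,x}}$ is small, run the Strichartz-plus-fractional-Leibniz estimate on each piece, and sum.

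First, fix a small $\eta>0$ (to be chosen below) and partition $[0,\infty)=\bigcup_{j=0}^{J}I_j$ with $I_j=[t_j,t_{j+1})$, $t_0=0$, $t_{J+1}=\infty$, so that $\|u\|_{L^{2(d+1)/(d-2)}_{t,x}(I_j)}\le\eta$ on each $I_j$. Finiteness of the total scattering norm guarantees that $J$ is finite. On a given $I_j$, write Duhamel starting from $t_j$,
\[
u(t)=\cD(t-t_j)(u(t_j)+\partial_t u(t_j))+\partial_t\cD(t-t_j)u(t_j)+\int_{t_j}^{t}\cD(t-s)\cN(u(s))\,ds,
\]
and apply Proposition~\ref{prop1.1} (with $\gamma=1/2$ for the pair $(2(d+1)/(d-1),2(d+1)/(d-1))$, cf.\ Remark~\ref{rem3.1}) to the linear part, together with Proposition~\ref{prop1.2} and the fractional Leibniz rule---exactly as in \eqref{eq3.3}---to the Duhamel term. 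This yields
\[
\|\jbra{\nabla}^{1/2}u\|_{L^{2(d+1)/(d-1)}_{t,x}(I_j)}\le C\|(u(t_j),\partial_t u(t_j))\|_{H^1\times L^2}+C\eta^{4/(d-2)}\|\jbra{\nabla}^{1/2}u\|_{L^{2(d+1)/(d-1)}_{t,x}(I_j)}.
\]
Choosing $\eta$ so that $C\eta^{4/(d-2)}\le 1/2$ and absorbing gives
\[
\|\jbra{\nabla}^{1/2}u\|_{L^{2(d+1)/(d-1)}_{t,x}(I_j)}\le 2C\|(u(t_j),\partial_t u(t_j))\|_{H^1\times L^2}.
\]

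Because $u$ is a global solution, $(u,\partial_t u)\in C([0,\infty);H^1\times L^2)$, so each $\|(u(t_j),\partial_t u(t_j))\|_{H^1\times L^2}$ is finite. Summing the $(2(d+1)/(d-1))$-th powers over the $J+1$ intervals then bounds $\|\jbra{\nabla}^{1/2}u\|_{L^{2(d+1)/(d-1)}_{t,x}([0,\infty))}$. The only technical point to verify is that the Strichartz constants in Propositions~\ref{prop1.1} and \ref{prop1.2} are independent of the interval, which is explicitly stated there; this is what allows the same $\eta$ to close the bootstrap uniformly across all pieces, including on the unbounded last piece $[t_J,\infty)$. I do not foresee any genuine obstacle beyond this routine verification.
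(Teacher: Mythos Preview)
Your proposal is correct and follows essentially the same approach as the paper: the paper's proof also applies Duhamel from a late time together with the Strichartz estimates (Propositions~\ref{prop1.1} and \ref{prop1.2}) and the fractional Leibniz rule to absorb the nonlinear contribution once the scattering norm on the remaining interval is small. The only cosmetic difference is that the paper treats just the tail $(\tau,\infty)$ this way---invoking the definition of solution to handle the compact part $[0,\tau]$---while you partition the whole half-line into finitely many small-norm pieces; both are standard variants of the same bootstrap.
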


\begin{proof}
The proof is very similar to the proof of the standard blow-up criterion. 
Take $0 < \tau < T <\infty$ arbitrary. We know that the global solution belongs to $L_{t,x}^{\frac{2(d+1)}{d-1}}(K)$ for any compact interval $K \subset [0,\infty)$. 
It follows from  the Duhamel's formula and the Strichartz estimates that
\begin{align*}
	\norm{\jbra{\nabla}^{\frac{1}{2}} u}_{L_{t,x}^{\frac{2(d+1)}{d-1}}((\tau,T))}
	&\cleq \norm{(u(\tau),\partial_t u(\tau))}_{H^1\times L^2}
	\\
	&\quad + \norm{u}_{L_{t,x}^{\frac{2(d+1)}{d-2}}((\tau,T))}^{\frac{4}{d-2}} \norm{ \jbra{\nabla}^{\frac{1}{2}} u }_{L_{t,x}^{\frac{2(d+1)}{d-1}}((\tau,T))}.
\end{align*}
Since $\norm{u}_{L_{t,x}^{\frac{2(d+1)}{d-2}}((\tau,T))} \ll 1$ for large $\tau$, we obtain 
\begin{align*}
	\norm{ \jbra{\nabla}^{\frac{1}{2}} u }_{L_{t,x}^{\frac{2(d+1)}{d-1}}((\tau,T))}
	\cleq \norm{(u(\tau),\partial_t u(\tau))}_{H^1\times L^2}
\end{align*} 
for large $\tau>0$. Fix such $\tau$. Since $T$ is arbitrary, we get $\norm{ \jbra{\nabla}^{\frac{1}{2}} u }_{L_{t,x}^{2(d+1)/(d-1)}((\tau,\infty))}\cleq \norm{(u(\tau),\partial_t u(\tau))}_{H^1\times L^2}$. Thus, we obtain $\norm{ \jbra{\nabla}^{\frac{1}{2}} u }_{L_{t,x}^{2(d+1)/(d-1)}([0,\infty))}<\infty$.
\end{proof}

\begin{proof}[Proof of Theorem \ref{thm1.4}]
We have
\begin{align*}
	\begin{pmatrix}
	u 
	\\ 
	\partial_t u
	\end{pmatrix}
	=\cA(t) 
	\begin{pmatrix}
	u_0
	\\
	u_1
	\end{pmatrix}
	+\int_{0}^{t}
	\cA(t-s) 
	\begin{pmatrix}
	0
	\\
	\cN(u(s))
	\end{pmatrix}
	ds,
\end{align*}
where 
\begin{align*}
	\cA(t)=
	\begin{pmatrix}
	\cD(t)+\partial_t \cD(t) & \cD(t)
	\\
	\partial_t \cD(t)+\partial_t^2 \cD(t) &\partial_t  \cD(t)
	\end{pmatrix}.
\end{align*}
We set 
\begin{align*}
	I
	&:=\cA(t) 
	\begin{pmatrix}
	u_0
	\\
	u_1
	\end{pmatrix},
	\\
	I\!\!I
	 &:= \int_{0}^{\tau}
	\cA(t-s) 
	\begin{pmatrix}
	0
	\\
	\cN(u(s))
	\end{pmatrix}
	ds,
	 \\
	 I\!\!I\!\!I
	 &:= \int_{\tau}^{t}
	\cA(t-s) 
	\begin{pmatrix}
	0
	\\
	\cN(u(s))
	\end{pmatrix}
	ds.
\end{align*}
We begin with the estimate of $I$. Approximating $(u_0,u_1)$ by $(\psi_0,\psi_1) \in (C_{0}^{\infty}(\R^d))^2$ in $H^1(\R^d) \times L^2(\R^d)$, we obtain
\begin{align*}
	\norm{I}_{H^1 \times L^2} 
	&= \norm{\cA (t) (u_0,u_1)^{T}}_{H^1 \times L^2}
	\\
	&\leq \norm{\cA (t) \{(u_0,u_1)- (\psi_0,\psi_1)\}^{T}}_{H^1 \times L^2} 
	+ \norm{\cA (t) (\psi_0,\psi_1)^{T}}_{H^1 \times L^2},
\end{align*} 
where $^{T}$ denotes transposition. 
By  \cite[Theorem 1.1]{IIOWp}, we have the following $L^p$-$L^q$ type estimates:
\begin{align*}
	\norm{\cD(t)f}_{H^1} 
	&\cleq \jbra{t}^{-\frac{d}{2} \l( \frac{1}{q}-\frac{1}{2}\r)} \norm{f}_{L^q}+ e^{-\frac{t}{2}} \jbra{t}^{\delta} \norm{f}_{L^2},
	\\
	\norm{\partial_t \cD(t)f}_{L^2} 
	&\cleq \jbra{t}^{-\frac{d}{2} \l( \frac{1}{q}-\frac{1}{2}\r)-1} \norm{f}_{L^q}+ e^{-\frac{t}{2}} \jbra{t}^{\delta} \norm{f}_{L^2},
	\\
	\norm{\partial_t \cD(t)f}_{H^1} 
	&\cleq \jbra{t}^{-\frac{d}{2} \l( \frac{1}{q}-\frac{1}{2}\r)-1} \norm{f}_{W^{1,q}}+ e^{-\frac{t}{2}} \jbra{t}^{\delta} \norm{f}_{H^1},
	\\
	\norm{\partial_t^2 \cD(t)f}_{L^2} 
	&\cleq \jbra{t}^{-\frac{d}{2} \l( \frac{1}{q}-\frac{1}{2}\r)-2} \norm{f}_{L^q}+ e^{-\frac{t}{2}} \jbra{t}^{\delta} \norm{\jbra{\nabla}f}_{L^2},
\end{align*}
for any $q \in [1,2]$ and some $\delta>0$. Therefore, applying these as $q=2$, we get
\begin{align*}
	 \norm{\cA (t) \{(u_0,u_1)- (\psi_0,\psi_1)\}^{T}}_{H^1 \times L^2}  \cleq \norm{(u_0,u_1)- (\psi_0,\psi_1)}_{H^1\times L^2}.
\end{align*}
Thus, this can be made arbitrary small by the approximation. Applying the above $L^p$-$L^q$ type estimates as $q=1$, we obtain
\begin{align*}
	\norm{\cA (t) (\psi_0,\psi_1)^{T}}_{H^1 \times L^2}
	&\leq  \norm{\cD(t) (\psi_0 +\psi_1)}_{H^1} +  \norm{\partial_t \cD(t) \psi_0 }_{H^1}
	\\
	&\quad + \norm{\partial_t \cD(t) (\psi_0 +\psi_1)}_{L^2} +  \norm{\partial_t^2 \cD(t) \psi_0 }_{L^2}
	\\
	&\cleq \jbra{t}^{-\frac{d}{4}} ( \norm{\psi_0}_{W^{1,1}}+\norm{\psi_1}_{L^1} ) 
	+ e^{-\frac{t}{4}} (\norm{\psi_0}_{H^1}+ \norm{\psi_1}_{L^2} )
	 \\
	 &\to 0 \text{ as } t \to \infty. 
\end{align*}
Next, we consider the estimate of $I\!\!I\!\!I$. By the Strichartz estimates, we have
\begin{align}
\label{eq5.1}
	\norm{I\!\!I\!\!I}_{H^1 \times L^2}
	&= \norm{\jbra{\nabla}  \int_{\tau}^{t} \cD(t-s) \cN(u(s)) ds}_{L^2} 
	\\ \notag
	&\quad +\norm{ \int_{\tau}^{t} \partial_t \cD(t-s) \cN(u(s)) ds}_{L^2} 
	\\ \notag
	&\cleq  \norm{\jbra{\nabla}^{\frac{1}{2}} \cN(u)}_{L_{t,x}^{\frac{2(d+1)}{d+3}}((\tau,t))}
	\\ \notag
	&\cleq \norm{u}_{L_{t,x}^{\frac{2(d+1)}{d-2}}((\tau,t))}^{\frac{4}{d-2}}
	\norm{\jbra{\nabla}^{\frac{1}{2}}  u}_{L_{t,x}^{\frac{2(d+1)}{d-1}}((\tau,t))}.
\end{align}
Therefore, the term is arbitrary small taking $\tau$ sufficiently close to $t$. At Last, we calculate $I\!\!I$. We note that
\begin{align*}
	I\!\!I
	=   \int_{0}^{\tau}
	\cA(t-s) 
	\begin{pmatrix}
	0
	\\
	\cN(u(s))
	\end{pmatrix}
	ds
	=
	\cA(t-\tau) 
	 \int_{0}^{\tau}
	\cA(\tau-s) 
	\begin{pmatrix}
	0
	\\
	\cN(u(s))
	\end{pmatrix}
	ds.
\end{align*}
Since by \eqref{eq5.1} we know 
\begin{align*}
	\int_{0}^{\tau} \cA(\tau-s) \begin{pmatrix} 0 \\ \cN(u(s)) \end{pmatrix}\in H^1(\R^d) \times L^2(\R^d),
\end{align*} approximating it by $\vec{\psi} \in (C_{0}^{\infty}(\R^d)^2)$, we obtain
\begin{align*}
	\norm{I\!\!I}_{H^1 \times L^2} 
	&\leq   
	\norm{
	\cA(t-\tau) 
	\l\{
	 \int_{0}^{\tau}
	\cA(\tau-s) 
	\begin{pmatrix}
	0
	\\
	\cN(u(s))
	\end{pmatrix}
	ds
	- \vec{\psi}
	\r\}
	}_{H^1 \times L^2}
	\\
	& \quad + 
	\norm{\cA(t-\tau) \vec{\psi}}_{H^1 \times L^2}.
\end{align*}
In the smae way as $I$, the first term is arbitrary small by the approximation and the second term tends to $0$ as $t \to \infty$. Combining the estimates of $I$, $I\!\!I$, and $I\!\!I\!\!I$, we get the decay. 
\end{proof}

\section{Blow-up result}
\label{sec5}

This section is devoted to the proof of the blow-up result.
The proof is essentially given by Ohta \cite{Oht97}. However, we give the full proof for the reader's convenience. In \cite{Oht97}, Ohta used argument of ordinary differential inequality instead of concavity argument. Indeed, the following lemma was used in \cite{Oht97}. See Li--Zhou \cite{LiZh95} and Souplet \cite{Sou95} for the proof.

\begin{lemma}[Li--Zhou \cite{LiZh95}, Souplet \cite{Sou95}]
\label{lem5.1}
Let $h$ satisfy 
\begin{align*}
\l\{
\begin{array}{ll}
	h''(t) + h(t) \geq C h^\gamma(t), &  t>0,
	\\
	h(0)>0, \quad h'(0)>0, 
\end{array}
\r.
\end{align*}
for some constant $C>0$ and $\gamma>1$. Then $h$ can not exist for all $t>0$. 
\end{lemma}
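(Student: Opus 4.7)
The plan is to reduce the second-order differential inequality to a first-order separable inequality for $h$ by an energy-type integration, and then integrate that separable inequality explicitly to force $h$ to diverge at a finite time. This is the classical Li--Zhou / Souplet ODE blow-up argument. My first step would be to verify that $h'(t) > 0$ on the entire interval of existence, so that $h$ is strictly increasing. By continuity there is a maximal subinterval $[0,t_0)$ on which $h'>0$; if $t_0$ were strictly less than the lifespan of $h$, one would have $h'(t_0)=0$ while $h(t_0) > h(0) > 0$, and I would aim for a contradiction by combining $h''(t_0) \ge C h(t_0)^{\gamma} - h(t_0)$ with the integrated inequality derived in the next step. With monotonicity in hand I would multiply the inequality $h'' - C h^{\gamma} + h \ge 0$ by the positive factor $h'$ and integrate on $[0,t]$, yielding
\begin{equation*}
\tfrac{1}{2} h'(t)^{2} \ \ge\ \tfrac{C}{\gamma+1}\, h(t)^{\gamma+1}\ -\ \tfrac{1}{2} h(t)^{2}\ +\ K,
\end{equation*}
with $K := \tfrac{1}{2}h'(0)^{2} + \tfrac{1}{2}h(0)^{2} - \tfrac{C}{\gamma+1} h(0)^{\gamma+1}$ depending only on initial data.

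Since $\gamma+1 > 2$, the term $h^{\gamma+1}$ dominates $h^{2}$ once $h(t)$ exceeds a threshold depending only on $C$, $\gamma$ and the initial data; monotonicity of $h$ guarantees this threshold is crossed at some finite $T_{\ast} \ge 0$ (or immediately, if the initial data already lies beyond it). Beyond $T_{\ast}$ I would therefore obtain
\begin{equation*}
h'(t)\ \ge\ c\, h(t)^{(\gamma+1)/2}
\end{equation*}
for some $c > 0$. Rewriting this as $\tfrac{d}{dt}\bigl(h(t)^{-(\gamma-1)/2}\bigr) \le -\tfrac{(\gamma-1)c}{2}$ and integrating from $T_{\ast}$ gives
\begin{equation*}
0\ \le\ h(t)^{-(\gamma-1)/2}\ \le\ h(T_{\ast})^{-(\gamma-1)/2} - \tfrac{(\gamma-1)c}{2}(t - T_{\ast}),
\end{equation*}
which fails at some finite $T \le T_{\ast} + \tfrac{2}{(\gamma-1)c}\, h(T_{\ast})^{-(\gamma-1)/2}$, forcing $h$ to blow up before $T$ and contradicting global existence.

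The main obstacle is the interplay between the extra linear term $+h$ on the left-hand side and the superlinear term $C h^{\gamma}$: when $h$ is small, $C h^{\gamma} - h$ can be negative, so the monotonicity of $h'$ is not automatic and the term $-\tfrac{1}{2}h^{2}$ in the energy inequality competes with $\tfrac{C}{\gamma+1} h^{\gamma+1}$. Handling this requires the initial data to place $h$ in the ``blow-up regime,'' namely that $h(0)$ be large enough for the superlinear contribution to the energy functional to dominate; in the application to \eqref{NLDW}, this is exactly what is guaranteed by the structural assumption $(u_{0},u_{1}) \in \scB$ together with the choice of an appropriate functional $h(t)$ (e.g.\ a suitable moment of the solution), and it is what ultimately unlocks the lemma.
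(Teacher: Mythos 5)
Your proposal cannot be made to work, and the reason is visible in your own closing paragraph: the argument needs a largeness condition on $h(0)$ that is neither in the hypotheses nor available in the application. In fact the statement as printed (with the zeroth-order term $h(t)$ on the left) is false without such a condition: take $C=1$, $\gamma=2$ and $h(t)=\tfrac12-\tfrac15 e^{-t}$; then $h(0)=\tfrac3{10}>0$, $h'(0)=\tfrac15>0$, and $h''+h-h^2=\tfrac14-\tfrac15e^{-t}-\tfrac1{25}e^{-2t}\geq \tfrac1{100}>0$ for all $t\geq0$, yet $h$ is global and bounded. This pinpoints the two steps of your sketch that genuinely fail. First, ``monotonicity of $h$ guarantees this threshold is crossed at some finite $T_\ast$'' is wrong: a bounded increasing function need never cross the threshold where $\tfrac{C}{\gamma+1}h^{\gamma+1}$ dominates $\tfrac12 h^2$, and the counterexample lives exactly in that regime. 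Second, your persistence argument for $h'>0$ does not close: at a first zero $t_0$ of $h'$ the integrated inequality only gives $0\geq \tfrac{C}{\gamma+1}h(t_0)^{\gamma+1}-\tfrac12 h(t_0)^2+K$, which is no contradiction when $h$ stays small. Finally, the escape you propose (that the application supplies the needed largeness) is not correct either: in the proof of Theorem \ref{thm1.5} the quantity fed into the lemma is $I(t)=\tfrac12\|u(t)\|_{L^2}^2$, and nothing in the hypothesis $(u_0,u_1)\in\scB$ makes $I(t_1)$ large.

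The resolution is that the lemma is quoted with a typo: as it is actually used in Section \ref{sec5} (applied to $I''+I'\geq C I^{(d-1)/(d-2)}$), and as it is proved in the cited references \cite{LiZh95,Sou95}, the left-hand side is $h''(t)+h'(t)$, not $h''(t)+h(t)$. The paper itself gives no proof (it refers to Li--Zhou and Souplet), so the relevant comparison is with their argument, and for the damped inequality your energy method is not just incomplete but structurally unavailable: multiplying $h''+h'\geq Ch^{\gamma}$ by $h'$ produces the dissipative term $-(h')^2$, so the quantity $\tfrac12(h')^2-\tfrac{C}{\gamma+1}h^{\gamma+1}$ is no longer monotone and there is no compensating $\tfrac12h^2$ term. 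The standard proof instead works at first order: from $(e^{t}h')'\geq Ce^{t}h^{\gamma}$ and $h'(0)>0$ one gets $h'>0$ and then, via an ODE comparison/iteration, an eventual first-order inequality of the type $h'\gtrsim h^{\gamma}$ with $\gamma>1$, which forces finite-time blow-up with no largeness assumption on $h(0)$. So the correct fix is to prove that version by the first-order route, not to patch the conservative energy argument.
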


Setting 
\begin{align*}
	I(t):=\frac{1}{2} \norm{u(t)}_{L^2}^2, 
\end{align*}
we will prove that $I$ satisfies the ordinary differential inequality for large $t$. 

First, we show the positivity of $K$ near $0$. 

\begin{lemma}[Positivity of $K$ near $0$]
Let $\{u_n\}_{n \in \N} \subset H^1(\R^d)$ satisfy $u_n \to 0$ strongly in $H^1(\R^d)$. Then, for large $n \in \N$, we have
\begin{align*}
	K(u_n) \geq 0. 
\end{align*}
\end{lemma}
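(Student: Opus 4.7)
The plan is to exploit the subcritical scaling mismatch between $\|\nabla\varphi\|_{L^2}^2$ and $\|\varphi\|_{L^{2d/(d-2)}}^{2d/(d-2)}$: the first is quadratic in $\varphi$ while the second has a strictly higher power, so the nonlinear term is negligible compared to the linear term when $\varphi$ is small in $H^1$.

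Concretely, I would start from the Sobolev embedding $\dot{H}^1(\R^d) \hookrightarrow L^{2d/(d-2)}(\R^d)$, which yields a constant $C_S>0$ with
\begin{equation*}
    \|\varphi\|_{L^{2d/(d-2)}} \leq C_S \|\nabla \varphi\|_{L^2}
    \qquad \text{for all } \varphi \in \dot{H}^1(\R^d).
\end{equation*}
Raising both sides to the power $2d/(d-2)$ and inserting into the definition of $K$ gives
\begin{equation*}
    K(u_n) \geq \|\nabla u_n\|_{L^2}^2 - C_S^{2d/(d-2)}\, \|\nabla u_n\|_{L^2}^{2d/(d-2)}
    = \|\nabla u_n\|_{L^2}^2 \Bigl( 1 - C_S^{2d/(d-2)}\, \|\nabla u_n\|_{L^2}^{4/(d-2)} \Bigr).
\end{equation*}

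Since $u_n \to 0$ strongly in $H^1(\R^d)$, in particular $\|\nabla u_n\|_{L^2} \to 0$, so for all sufficiently large $n$ the factor $1 - C_S^{2d/(d-2)}\|\nabla u_n\|_{L^2}^{4/(d-2)}$ is nonnegative. This gives $K(u_n) \geq 0$ for large $n$, as desired. There is essentially no obstacle here; the only thing worth noting is that the argument uses only the \emph{homogeneous} Sobolev embedding, which is exactly why the power $4/(d-2) > 0$ appears and makes the correction term vanish in the limit.
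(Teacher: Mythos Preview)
Your proof is correct and essentially identical to the paper's own argument: both apply the Sobolev inequality $\|\varphi\|_{L^{2d/(d-2)}} \leq C \|\nabla \varphi\|_{L^2}$, factor $K(u_n) \geq \|\nabla u_n\|_{L^2}^2\bigl(1 - C\|\nabla u_n\|_{L^2}^{4/(d-2)}\bigr)$, and use $\|\nabla u_n\|_{L^2} \to 0$ to make the bracket nonnegative for large $n$.
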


\begin{proof}
By the Sobolev inequality, we get
\begin{align*}
	K(u_n) &= \norm{\nabla u_n}_{L^2}^2 - \norm{u_n}_{L^{\frac{2d}{d-2}}}^{\frac{2d}{d-2}}
	\\
	&\geq \norm{\nabla u_n}_{L^2}^2 \l(1- C \norm{\nabla u_n}_{L^2}^{\frac{4}{d-2}}\r). 
\end{align*}
Since $u_n \to 0$ strongly in $H^1(\R^d)$, we have $K(u_n) \geq \frac{1}{2} \norm{\nabla u_n}_{L^2}^2 \geq 0$ for large $n$. 
\end{proof}

Secondly, we prove that the set $\scB$ is invariant under the flow. 

\begin{lemma}
Let $(u_0,u_1)$ belong to $\scB$. Then the solution $(u(t),\partial_t u(t))$ of \eqref{NLDW} belongs to $\scB$ for all existence time $t \in [0,T_{\max})$. 
\end{lemma}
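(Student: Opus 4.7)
The plan is to show the two defining inequalities of $\scB$ are preserved along the flow. The bound $E(u(t),\partial_t u(t))<\mu$ is immediate from the energy dissipation identity
\begin{align*}
	E(u(t),\partial_t u(t)) = E(u_0,u_1) - \int_0^t \|\partial_t u(s)\|_{L^2}^2\,ds \le E(u_0,u_1) < \mu,
\end{align*}
which holds on the full existence interval $[0,T_{\max})$ by the energy identity recalled earlier. So the only real content is propagating the sign condition $K(u(t))<0$.

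For that, I would argue by contradiction. Suppose there exists $t\in(0,T_{\max})$ with $K(u(t))\ge 0$. Since $K(u_0)<0$ and $t\mapsto K(u(t))$ is continuous (because $u\in C([0,T_{\max});H^1)$ and $K$ is continuous on $H^1$ by the Sobolev embedding $\dot H^1\hookrightarrow L^{2d/(d-2)}$), there is a first time $t_1\in(0,T_{\max})$ with $K(u(t_1))=0$, while $K(u(t))<0$ for all $t\in[0,t_1)$.

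Now split on whether the obstruction is at a nontrivial profile. If $u(t_1)\neq 0$, then by the very definition of the minimizing value $\mu$, one has
\begin{align*}
	\mu \le J(u(t_1)) = \tfrac{1}{2}\|\nabla u(t_1)\|_{L^2}^2 - \tfrac{d-2}{2d}\|u(t_1)\|_{L^{2d/(d-2)}}^{2d/(d-2)} \le E(u(t_1),\partial_t u(t_1)) < \mu,
\end{align*}
a contradiction. If instead $u(t_1)=0$, then by the continuity of $t\mapsto u(t)$ in $H^1$ we have $u(t)\to 0$ in $H^1$ as $t\nearrow t_1$; applying the preceding lemma on the positivity of $K$ near $0$ to any sequence $t_n\nearrow t_1$ yields $K(u(t_n))\ge 0$ for large $n$, which contradicts $K(u(t))<0$ on $[0,t_1)$.

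The main obstacle is really just handling the degenerate case $u(t_1)=0$ cleanly; this is precisely why the preceding ``positivity of $K$ near $0$'' lemma was proved, so the argument goes through without further work. Once both cases are ruled out, $K(u(t))<0$ persists on $[0,T_{\max})$ and hence $(u(t),\partial_t u(t))\in\scB$ for all such $t$.
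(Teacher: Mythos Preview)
Your proof is correct and follows essentially the same approach as the paper: energy dissipation gives $E<\mu$, and the sign condition $K<0$ is propagated by contradiction via a first hitting time, splitting on whether $u$ vanishes there and invoking the minimizing property of $\mu$ or the ``positivity of $K$ near $0$'' lemma. The only cosmetic difference is that you assume $K(u(t))\ge 0$ at some time (rather than $>0$), which is slightly cleaner but leads to the identical case analysis.
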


\begin{proof}
Since the energy $E$ satisfies that 
\begin{align}
	\label{eq6.1}
	\frac{d}{dt} E(u(t),\partial_t u(t)) =-  \norm{\partial_t u(t)}_{L^2}^2, \text{ for all } t \in (0,T_{\max}),
\end{align}
we have $E(u(t), \partial_t u(t)) \leq E(u_0,u_1) \leq \mu$ for all $t \in [0,T_{\max})$. Thus, it is enough to prove $K(u(t)) < 0$ for all $t \in [0,T_{\max})$. We suppose that there exists a time $t_0 \in (0,T_{\max})$ such that $K(u(t_0))>0$. Then, by the continuity of the flow, there exists $t_* \in (0,T_{\max})$ such that $K(u(t_*))=0$ and $K(u(t))<0$ for $t \in (0,t_*)$. Assume that $u(t_*) \neq 0$. Then, by the definition of the minimizing problem $\mu$, we have
\begin{align*}
	\mu \leq J(u(t_*)) \leq E(u(t_*),\partial_t u(t_*)) \leq E(u_0,u_1) < \mu.
\end{align*}
This is a contradiction. Therefore, we get $u(t_*)=0$. Since the flow is continuous, if we take $\{t_n\} \subset (0,t_*)$ such that $t_n \to t_*$, then $u(t_n) \to u(t_*)=0$ strongly in $H^1(\R^d)$. By the positivity of $K$ near $0$, we have $K(u(t_n))\geq 0$ and $t_n \in (0,t_*)$ for large $n \in \N$. This contradicts $K(u(t))<0$ for all $t \in (0,t_*)$. Therefore, we get $K(u(t)) < 0$ for all $t \in [0,T_{\max})$. 
\end{proof}

\begin{remark}
We define
\begin{align*}
	\scG:= \{ (u_0,u_1) \in H^1(\R^d) \times L^2(\R^d): E(u_0,u_1) < \mu, K(u_0)\geq 0\}.
\end{align*}
If $(u_0,u_1)$ belongs to $\scG$, then the solution $(u(t),\partial_t u(t))$ belongs to $\scG$ for all existence time $t \in [0,T_{\max})$. This can be proved in the similar argument to the above. 
\end{remark}

We set 
\begin{align*}
	H(\varphi):= J(\varphi)  -  \frac{d-2}{2d} K(\varphi)
	=\frac{1}{d} \norm{\nabla \varphi}_{L^2}^2. 
\end{align*}

\begin{lemma}
We have
\begin{align*}
	\mu= \inf \l\{H(\varphi) : \varphi \in H^1(\R^d) \setminus \{0\}, K(\varphi) \leq 0\r\}. 
\end{align*}
\end{lemma}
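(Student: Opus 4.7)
\medskip

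The plan is to prove the two inequalities $\nu \leq \mu$ and $\nu \geq \mu$, where $\nu$ denotes the infimum on the right-hand side. Both directions rest on the same scaling identity: for $\lambda > 0$,
\begin{equation*}
K(\lambda \varphi) = \lambda^{2}\|\nabla \varphi\|_{L^{2}}^{2} - \lambda^{\frac{2d}{d-2}} \|\varphi\|_{L^{\frac{2d}{d-2}}}^{\frac{2d}{d-2}},
\quad H(\lambda \varphi) = \lambda^{2} H(\varphi),
\end{equation*}
together with the elementary observation that on $\{K = 0\}$ the functionals $J$ and $H$ coincide.

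For the easier direction $\nu \geq \mu$, fix $\varphi \in H^{1}(\R^{d}) \setminus \{0\}$ with $K(\varphi) \leq 0$. The function $\lambda \mapsto K(\lambda \varphi)$ is continuous, satisfies $K(\lambda \varphi) > 0$ for small $\lambda > 0$ (the $\lambda^{2}$ term dominates), and $K(\varphi) \leq 0$ at $\lambda = 1$, so there exists $\lambda^{*} \in (0,1]$ with $K(\lambda^{*} \varphi) = 0$. Since $\lambda^{*} \varphi \in \dot H^{1} \setminus \{0\}$ lies in the admissible set of $\mu$, the defining inequality together with $J = H$ on $\{K=0\}$ yields
\begin{equation*}
\mu \leq J(\lambda^{*} \varphi) = H(\lambda^{*} \varphi) = (\lambda^{*})^{2} H(\varphi) \leq H(\varphi),
\end{equation*}
so taking the infimum gives $\mu \leq \nu$.

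For $\nu \leq \mu$, the obstacle is that the Talenti minimizer $W$ lies in $\dot H^{1}$ but not in $L^{2}(\R^{d})$ when $d \in \{3,4\}$, so it cannot be used directly as a competitor for $\nu$. I propose to approximate: fix $\chi \in C_{c}^{\infty}(\R^{d})$ with $\chi \equiv 1$ on $B_{1}$, and set $W_{R}(x) := \chi(x/R) W(x) \in H^{1}(\R^{d})$. A direct application of dominated convergence, using $W \in \dot H^{1} \hookrightarrow L^{2d/(d-2)}$, gives $\|\nabla W_{R}\|_{L^{2}}^{2} \to \|\nabla W\|_{L^{2}}^{2}$ and $\|W_{R}\|_{L^{2d/(d-2)}}^{2d/(d-2)} \to \|W\|_{L^{2d/(d-2)}}^{2d/(d-2)}$ as $R \to \infty$. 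Since $K(W) = 0$, the quantity $K(W_{R})$ need not be nonpositive; I therefore rescale by choosing $\lambda_{R} > 0$ so that $K(\lambda_{R} W_{R}) = 0$, which by the explicit formula above forces
\begin{equation*}
\lambda_{R}^{\frac{4}{d-2}} = \frac{\|\nabla W_{R}\|_{L^{2}}^{2}}{\|W_{R}\|_{L^{2d/(d-2)}}^{2d/(d-2)}} \longrightarrow 1.
\end{equation*}
Then $\lambda_{R} W_{R} \in H^{1} \setminus \{0\}$ with $K(\lambda_{R} W_{R}) = 0 \leq 0$, and $H(\lambda_{R} W_{R}) = \lambda_{R}^{2} H(W_{R}) \to H(W) = \mu$, yielding $\nu \leq \mu$.

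The main difficulty is the space mismatch between $\dot H^{1}$ in the definition of $\mu$ and $H^{1}$ appearing in the target identity; the cut-off plus rescaling argument circumvents it, the only nontrivial verification being that $\lambda_{R} \to 1$ (which follows from the convergence of both $\|\nabla W_{R}\|_{L^{2}}^{2}$ and $\|W_{R}\|_{L^{2d/(d-2)}}^{2d/(d-2)}$ to the same positive limit $\|\nabla W\|_{L^{2}}^{2}$).
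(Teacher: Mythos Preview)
Your argument is correct. The direction $\mu \leq \nu$ via the scaling $\lambda \mapsto \lambda\varphi$ to land on $\{K=0\}$ is exactly the paper's proof of that inequality. For the reverse direction $\nu \leq \mu$, the paper simply declares it ``trivial'' without further comment, whereas you correctly observe that the Talenti minimizer $W$ fails to lie in $H^{1}$ when $d\in\{3,4\}$ and supply the standard cutoff-plus-rescaling argument to produce competitors in $H^{1}$ with $K=0$ and $H$-value converging to $\mu$. Your treatment of this direction is therefore more careful than the paper's; the paper is implicitly relying on density of $C_{c}^{\infty}$ in $\dot H^{1}$ together with exactly the rescaling step you write out, but does not say so. The verification that $\lambda_{R}\to 1$ and $\|\nabla W_{R}\|_{L^{2}}\to\|\nabla W\|_{L^{2}}$ (the cross term $R^{-1}(\nabla\chi)(\cdot/R)W$ vanishes in $L^{2}$ since $W(x)\sim|x|^{-(d-2)}$ gives an $O(R^{2-d})$ contribution) is routine and your sketch is adequate.
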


\begin{proof}
We denote the right hand side by $\mu'$. It is trivial that $\mu \geq \mu'$. We prove $\mu \leq \mu'$. 
If $\varphi \in H^1(\R^d)\setminus\{0\}$ satisfies $K(\varphi)=0$, it follows that
\begin{align*}
	\mu \leq J(\varphi) = H(\varphi).
\end{align*}
If $\varphi \in H^1(\R^d)\setminus\{0\}$ satisfies $K(\varphi)<0$, there exists $\lambda_0 \in (0,1)$ such that $K(\lambda_0 \varphi)=0$. Thus, we have
\begin{align*}
	\mu \leq J(\lambda_0 \varphi) = H(\lambda_0 \varphi) < H(\varphi).
\end{align*}
Therefore, we obtain $\mu \leq H(\varphi)$ for any $\varphi \in H^1(\R^d)\setminus\{0\}$. Take the infimum, we get $\mu \leq \mu'$. 
\end{proof}

\begin{lemma}
\label{lem5.5}
Let $u(t)$ be a solution to \eqref{NLDW} on $[0,T)$ with $(u_0,u_1) \in \scB$. Then,  we have 
\begin{align*}
	I''(t)+ I'(t) \geq \l(1+\frac{d}{d-2}\r) \norm{\partial_t u(t)}_{L^2}^2 + \frac{2d}{d-2} \l(\mu - E(u(t),\partial_t u(t))\r)
\end{align*}
for all $t \in (0,T)$. 
\end{lemma}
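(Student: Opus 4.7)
The plan is to derive the differential inequality by differentiating $I(t)$ twice and using the equation, then rewrite the right-hand side in terms of the energy $E$ and the functional $H$, and finally invoke the variational characterization of $\mu$ together with the invariance of $\scB$ under the flow.

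First I would compute
\begin{align*}
I'(t) = \re \int u \, \partial_t \bar u \, dx, \qquad
I''(t) = \norm{\partial_t u}_{L^2}^2 + \re \int u \, \partial_t^2 \bar u \, dx.
\end{align*}
Substituting $\partial_t^2 u = \Delta u - \partial_t u + |u|^{4/(d-2)}u$ into $I''$ and integrating by parts once in the $\Delta$-term gives
\begin{align*}
I''(t) = \norm{\partial_t u}_{L^2}^2 - \norm{\nabla u}_{L^2}^2 + \norm{u}_{L^{2d/(d-2)}}^{2d/(d-2)} - I'(t).
\end{align*}
Recognising the last two terms as $-K(u(t))$, I obtain the clean identity
\begin{align*}
I''(t) + I'(t) = \norm{\partial_t u(t)}_{L^2}^2 - K(u(t)).
\end{align*}

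Next, I would manipulate the right-hand side of the target inequality. Using $E(u,\partial_t u) = \tfrac{1}{2}\norm{\partial_t u}_{L^2}^2 + J(u)$ and $H(u) = J(u) - \tfrac{d-2}{2d}K(u) = \tfrac{1}{d}\norm{\nabla u}_{L^2}^2$, a direct algebraic computation yields
\begin{align*}
\l( \norm{\partial_t u}_{L^2}^2 - K(u)\r) - \l(1+\tfrac{d}{d-2}\r)\norm{\partial_t u}_{L^2}^2 - \tfrac{2d}{d-2}\bigl(\mu - E(u,\partial_t u)\bigr) = \tfrac{2d}{d-2}\bigl(H(u) - \mu\bigr).
\end{align*}
Thus the claim reduces to showing $H(u(t)) \geq \mu$ along the flow.

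This last inequality is exactly where I would use the two preceding lemmas. Since $(u_0,u_1)\in\scB$, the invariance lemma gives $K(u(t))<0$ for every $t\in[0,T)$, and in particular $u(t)\neq 0$. Then the variational characterisation $\mu = \inf\{H(\varphi): \varphi\in H^1\setminus\{0\},\, K(\varphi)\leq 0\}$ (Lemma 5.4) applies to $\varphi=u(t)$ and yields $H(u(t))\geq\mu$, which finishes the proof.

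I do not expect a serious obstacle here; the routine work is the integration by parts (justified in the energy class $u\in C(H^1)$, $\partial_t u\in C(L^2)$, together with the fact that $|u|^{4/(d-2)}u$ is admissible in duality with $u\in L^{2d/(d-2)}$ by Sobolev embedding), and the only conceptual ingredient is recognising the combination $H(u)-\mu$, which is nonnegative precisely because $\scB$ stays in the region $K<0$ throughout its lifespan.
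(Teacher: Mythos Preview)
Your proof is correct and follows essentially the same route as the paper: compute $I''+I'=\norm{\partial_t u}_{L^2}^2-K(u)$, rewrite $-K$ via $H$ and $E$, and close with $H(u(t))\geq\mu$ from the invariance of $\scB$ and the variational characterisation of $\mu$. The only cosmetic difference is that the paper substitutes $K=\tfrac{2d}{d-2}(E-\tfrac12\norm{\partial_t u}^2-H)$ directly, whereas you compute the difference between the two sides and identify it as $\tfrac{2d}{d-2}(H(u)-\mu)$; the algebra is the same.
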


\begin{proof}
By direct calculations and the equation, we have
\begin{align*}
	I'(t)&=\re \tbra{u(t)}{\partial_t u(t)}_{L^2},
	\\
	I''(t)&=\norm{\partial_t u(t)}_{L^2}^2 +\re \tbra{u(t)}{\partial_t^2 u(t)}_{L^2}
	\\
	&=\norm{\partial_t u(t)}_{L^2}^2 - \norm{\nabla u(t)}_{L^2}^2 - \re \tbra{u(t)}{\partial_t u(t)}_{L^2} + \norm{u(t)}_{L^{\frac{2d}{d-2}}}^{\frac{2d}{d-2}},
\end{align*}
where $\tbra{u}{v}:= \int_{\R^d} u(x) \overline{v(x)} dx$
Therefore, we obtain
\begin{align*}
	I''(t) + I'(t) = \norm{\partial_t u(t)}_{L^2}^2 - K(u(t)). 
\end{align*}
Since $K(\varphi)=\frac{2d}{d-2} (J(\varphi) - H(\varphi))=\frac{2d}{d-2} (E(\varphi,\psi) - \frac{1}{2}\norm{\psi}_{L^2}^2 - H(\varphi)) $, it follows that
\begin{align*}
	I''(t) + I'(t) 
	= \l( 1+ \frac{d}{d-2} \r) \norm{\partial_t u(t)}_{L^2}^2 + \frac{2d}{d-2} \l( H(u(t))- E(u(t), \partial_t u(t)) \r)
\end{align*}
Since we have $u(t) \in \scB$ for all $t \in [0,T_{\max})$ by the above lemma, we get $H(u(t)) \geq \mu$ by the above lemma. Thus, it follows that
\begin{align*}
	I''(t) + I'(t) 
	\geq \l( 1+ \frac{d}{d-2} \r) \norm{\partial_t u(t)}_{L^2}^2 + \frac{2d}{d-2} \l( \mu - E(u(t), \partial_t u(t))\r). 
\end{align*}
\end{proof}

\begin{lemma}
Assume that $u$ is a solution to \eqref{NLDW} on $[0,\infty)$ with $(u_0,u_1) \in \scB$. Then, there exists $t_1>0$ such that $I(t)>0$, $I'(t)>0$, and 
\begin{align*}
	\frac{d}{dt} \l( \frac{E(u(t),\partial_t u(t)) - \mu }{I^{\frac{d-1}{d-2}}(t)}\r) \leq 0
\end{align*}
for all $t \in (t_1,\infty)$. 
\end{lemma}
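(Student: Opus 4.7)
The plan is to introduce $b(t) := \mu - E(u(t),\partial_t u(t))$ and to exploit a differential inequality relating $I$ and $b$ derived from Lemma \ref{lem5.5}. From the energy dissipation identity we have $b'(t) = \|\partial_t u(t)\|_{L^2}^2 \ge 0$, and because $(u_0,u_1) \in \scB$, $b(0) > 0$; thus $b(t) \ge b(0) > 0$ for all $t \ge 0$. The invariance of $\scB$ (proved in the preceding lemmas) yields $K(u(t)) < 0$ for all $t$, so $u(t)\not\equiv 0$, whence $I(t) > 0$ throughout $[0,\infty)$.

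Setting $\alpha := (d-1)/(d-2)$, note that $1 + d/(d-2) = 2\alpha$, so Lemma \ref{lem5.5} reads
\[
	I''(t) + I'(t) \ge 2\alpha\, b'(t) + \tfrac{2d}{d-2}\, b(t).
\]
Introduce $g(t) := I'(t) - 2\alpha b(t)$. Differentiating gives
\[
	g'(t) + g(t) = I''(t) + I'(t) - 2\alpha b'(t) - 2\alpha b(t) \ge \Bigl(\tfrac{2d}{d-2} - 2\alpha\Bigr) b(t) = \tfrac{2}{d-2}\, b(t) \ge \tfrac{2 b(0)}{d-2} > 0.
\]
Multiplying by $e^{t}$ and integrating yields $g(t) \ge \tfrac{2 b(0)}{d-2}(1 - e^{-t}) + g(0)e^{-t}$, so $g$ is eventually strictly positive: there exists $t_1 > 0$ such that $g(t) > 0$, that is, $I'(t) \ge 2\alpha b(t) > 0$, for all $t > t_1$. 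In particular, $I'(t) > 0$ and $I(t) > 0$ on $(t_1,\infty)$.

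For the monotonicity, a direct computation gives
\[
	\frac{d}{dt}\!\left(\frac{E(u,\partial_t u) - \mu}{I^{\alpha}}\right) = \frac{\alpha b(t) I'(t) - b'(t) I(t)}{I(t)^{\alpha+1}}.
\]
By the Cauchy--Schwarz inequality,
\[
	(I'(t))^2 = \bigl(\re\tbra{u(t)}{\partial_t u(t)}_{L^2}\bigr)^2 \le \|u(t)\|_{L^2}^2 \|\partial_t u(t)\|_{L^2}^2 = 2 I(t) b'(t).
\]
Combining this with $I'(t) \ge 2\alpha b(t)$ (valid for $t > t_1$) gives
\[
	2\alpha b(t) I'(t) \le (I'(t))^2 \le 2 I(t) b'(t),
\]
i.e., $\alpha b(t) I'(t) - b'(t) I(t) \le 0$, which yields the claim.

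The substantive step is choosing $g = I' - 2\alpha b$: the coefficient $2\alpha$ is calibrated so that the $b'$-term from Lemma \ref{lem5.5} cancels on differentiation, leaving the clean linear inequality $g' + g \ge \tfrac{2}{d-2} b$. Everything else reduces to an ODE comparison together with Cauchy--Schwarz.
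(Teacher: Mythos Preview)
Your proof is correct and follows essentially the same route as the paper: your auxiliary function $g(t)=I'(t)-2\alpha b(t)$ is exactly the paper's $F(t)=I'(t)+(1+\tfrac{d}{d-2})(E(t)-\mu)$, and both arguments derive the linear inequality $g'+g\ge \tfrac{2}{d-2}b$, integrate it, and finish with the same Cauchy--Schwarz estimate $(I')^2\le 2Ib'$. Your observation that $I(t)>0$ holds for all $t$ directly from $K(u(t))<0$ (hence $u(t)\neq 0$) is a minor streamlining over the paper's argument.
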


\begin{proof}
We set $E(t):=E(u(t),\partial_t u(t))$ for simplicity. We define
\begin{align*}
	F(t):= I'(t) + \l(1+ \frac{d}{d-2}\r) \l( E(t) - \mu \r).
\end{align*}
It is follows from the energy and the lemma that
\begin{align*}
	F'(t)
	&=I''(t) +  \l(1+ \frac{d}{d-2}\r)  E'(t) 
	\\
	&=I''(t) -  \l(1+ \frac{d}{d-2}\r)  \norm{\partial_t u(t)}_{L^2}^2
	\\
	&\geq -I'(t) + \frac{2d}{d-2} \l( \mu - E(t)\r)
	\\
	&= -F(t)+ \frac{2}{d-2} \l( \mu - E(t)\r)
\end{align*}
Therefore, we have
\begin{align*}
	(e^{t} F(t))' 
	&= e^{t} F(t) + e^{t} F'(t) 
	\\
	&\geq e^{t} \frac{2}{d-2} \l( \mu - E(t)\r)
	\\
	& \geq  \frac{2}{d-2} \l( \mu - E(0)\r) e^{t}
\end{align*}
Integrating this on $[0,t]$, we obtain
\begin{align*}
	 F(t) \geq  F(0) e^{-t} + \frac{2}{d-2} \l( \mu - E(0)\r)  -  \frac{2}{d-2} \l( \mu - E(0)\r) e^{-t}.
\end{align*}
Thus there exists $t_0 >0$ such that 
\begin{align*}
	F(t) >0 \text{ for all } t \geq t_0,
\end{align*}
since $ \mu > E(u_0, u_1)$. This implies that, for all $t \geq t_0$,  
\begin{align}
	\label{eq6.2}
	I'(t) 
	&\geq - \l(1+ \frac{d}{d-2}\r) \l( E(t) - \mu \r) 
	\\ \notag
	&=  \l(1+ \frac{d}{d-2}\r) \l( \mu - E(t)\r) 
	\\ \notag
	&\geq \l(1+ \frac{d}{d-2}\r) \l( \mu - E(0)\r) 
	\\ \notag
	&>0.
\end{align}
Therefore, there exists $t_1 \geq  t_0$ such that 
\begin{align*}
	I(t)>0 \text{ for all } t \geq t_1. 
\end{align*}
For $t \geq t_1$, we have
\begin{align}
	\label{eq6.3}
	\frac{d}{dt} \l( \frac{E(t) - \mu }{I^{\frac{d-1}{d-2}}(t)}\r)
	= \frac{E'(t) I(t) - \frac{d-1}{d-2} I'(t) (E(t) - \mu )}{I^{\frac{2d-3}{d-2}}(t)}.
\end{align}
By \eqref{eq6.1} and \eqref{eq6.2}, we obtain
\begin{align*}
	&E'(t) I(t) - \frac{d-1}{d-2} I'(t) (E(t) - \mu )
	\\
	&\leq - \norm{\partial_t u(t)}_{L^2}^2 I(t) - \frac{d-1}{d-2} I'(t) (E(t) - \mu )
	\\
	&\leq -2 \norm{\partial_t u(t)}_{L^2}^2 \norm{u(t)}_{L^2}^2  + \frac{d-1}{d-2} \l(1+ \frac{d}{d-2}\r) (I'(t))^{2} 
	\\
	&=  -2 \norm{\partial_t u(t)}_{L^2}^2 \norm{u(t)}_{L^2}^2  +2 (I'(t))^{2} 
	\\
	&\leq 0, 
\end{align*}
where we used the Cauchy--Schwarz inequality $I'(t) \leq \| u(t) \|_{L^2} \| \partial_t u(t) \|_{L^2}$ in the last inequality. This and \eqref{eq6.3} implies the statement. 
\end{proof}

\begin{proof}[Proof of Theorem \ref{thm1.5}]
We suppose that the solution $u$ to \eqref{NLDW} with $(u_0,u_1) \in \scB$ exists globally in time. Then, we have
\begin{align*}
	I(t)>0, 
	\quad I'(t)>0,
	\quad \mu- E(t) \geq  C I^{\frac{d-1}{d-2}}(t)
\end{align*}
for large $t\geq t_1$, where we set $C=(\mu-E(t_1))/I^{\frac{d-1}{d-2}}(t_1)>0$ and we recall $E(t)=E(u(t),\partial_t u(t))$. By Lemma \ref{lem5.5}, we get
\begin{align*}
	I''(t) + I'(t) 
	&\geq  \l(1+\frac{d}{d-2}\r) \norm{\partial_t u(t)}_{L^2}^2 + \frac{2d}{d-2} \l(\mu - E(t)\r)
	\\
	&\geq  \frac{2d}{d-2} \l(\mu - E(t)\r)
	\\
	&\geq C  I^{\frac{d-1}{d-2}}(t),
\end{align*}
for $t \geq t_1$. We also have $I(t_1)>0$ and $I'(t_1)>0$. Thus, by Lemma \ref{lem5.1}, $I(t)$ can not exist globally. This contradicts the assumption that the solution $u$ is global. Therefore, we get the statement. 
\end{proof}


\begin{acknowledgement}
The author would like to express deep appreciation to Professor Masahito Ohta and Professor Yuta Wakasugi for many useful suggestions, valuable comments and warm-hearted encouragement. The author was partially supported by JSPS Grant-in-Aid for Early-Career Scientists JP18K13444.
\end{acknowledgement}

\end{document}